\documentclass[review,a4page]{elsarticle}
% \smartqed  % flush right qed marks, e.g. at end of proof
% \documentclass[review]{elsarticle}
\usepackage[utf8]{inputenc}
\usepackage{graphicx}
\setcounter{tocdepth}{2}
\usepackage{srcltx}
\usepackage{eurosym}
\usepackage{mathtools}
\allowdisplaybreaks
\usepackage{enumerate}
\usepackage{amsmath}
\usepackage{amsfonts}
\usepackage{amssymb}
\usepackage{amsthm}
\usepackage{graphicx}
\usepackage{mathrsfs}
\usepackage{xcolor}
\usepackage{exscale}
\usepackage{latexsym}
\usepackage{esvect}
\usepackage[T1]{fontenc}

% \usepackage{cite}
% \usepackage{pgf,tikz}
% \usetikzlibrary{arrows}
\usepackage{calc}
\usepackage[titletoc,toc,page]{appendix}
%%%%%%%%%%%%%%%%%%%%%%%%%%%%%%%%%%%%%%%%%%%%%%%%%%%%%%%%%%%%%%%%%%%%%%%%%%%%%%%%%%%%%%%%%%%%%%%%%%%%%%%%%%%%%%%%%%%%%%%%%%%%%%%%%%%%%%%%%%%%%%%
\usepackage[colorlinks,plainpages=true,pdfpagelabels,hypertexnames=true,colorlinks=true,pdfstartview=FitV,linkcolor=blue,citecolor=red,urlcolor=black]{hyperref}
\PassOptionsToPackage{unicode}{hyperref}
\PassOptionsToPackage{naturalnames}{hyperref}
\AtBeginDocument{%
\hypersetup{citecolor=red}}
%%%%%%%%%%%%%%%%%%%%%%%%%%%%%%%%%%%%%%%%%%%%%%%%%%%%%%%%%%%%%%%%%%%%%%%%%%%%%%%%%%%%%%%%%%%%%%%%%%%%%%%%%%%%%%%%%%%%%%%%%%%%%%%%%%%%%%%%%%%%%%%
\usepackage{enumerate}
\usepackage[shortlabels]{enumitem}
\usepackage{bookmark}
\usepackage{wasysym}
\usepackage{esint}
\usepackage{setspace}
%%%%%%%%%%%%%%%%%%%%%%%%%%%%%%%%%%%%%%%%%%%%%%%%%%%%%%%%%%%%%%%%%%%%%%%%%%%%%%%%%%%%%%%%%%%%%%%%%%%%%%%%%%%%%%%%%%%%%%%%%%%%%%%%%%%%%%%%%%%%%%%
\usepackage[ddmmyyyy]{datetime}
\numberwithin{equation}{section}
\everymath{\displaystyle}
%%%%%%%%%%%%%%%%%%%%%%%%%%%%%%%%%%%%%%%%%%%%%%%%%%%%%%%%%%%%%%%%%%%%%%%%%%%%%%%%%%%%%%%%%%%%%%%%%%%%%%%%%%%%%%%%%%%%%%%%%%%%%%%%%%%%%%%%%%%%%%%
% \usepackage[margin=2.2cm]{geometry}
% \parskip = 0.15in
\usepackage[margin=2.2cm]{geometry}

% \setlength{\topmargin}{-0.1in}
% \setlength{\oddsidemargin}{0.25in}
% \setlength{\evensidemargin}{0.25in}
% \setlength{\textwidth}{6.0in}
% \setlength{\rightmargin}{0.7in}
% \setlength{\leftmargin}{-0.5in}
% % \setlength{\textheight}{8.8in}
% \headsep=0.25 in
%%%%%%%%%%%%%%%%%%%%%%%%%%%%%%%%%%%%%%%%%%%%%%%
\usepackage[capitalize,nameinlink]{cleveref}
\crefname{section}{Section}{Sections}
\crefname{subsection}{Subsection}{Subsections}
\crefname{condition}{Condition}{Conditions}
\crefname{hypothesis}{Hypothesis}{Hypothesis}
\crefname{assumption}{Assumption}{Assumptions}
\crefname{lemma}{Lemma}{Lemmas}
\crefname{claim}{Claim}{Claims}
\crefname{remark}{Remark}{Remarks}

\crefformat{equation}{\textup{#2(#1)#3}}
\crefrangeformat{equation}{\textup{#3(#1)#4--#5(#2)#6}}
\crefmultiformat{equation}{\textup{#2(#1)#3}}{ and \textup{#2(#1)#3}}
{, \textup{#2(#1)#3}}{, and \textup{#2(#1)#3}}
\crefrangemultiformat{equation}{\textup{#3(#1)#4--#5(#2)#6}}%
{ and \textup{#3(#1)#4--#5(#2)#6}}{, \textup{#3(#1)#4--#5(#2)#6}}%
{, and \textup{#3(#1)#4--#5(#2)#6}}

% But spell it out at the beginning of a sentence.
\Crefformat{equation}{#2Equation~\textup{(#1)}#3}
\Crefrangeformat{equation}{Equations~\textup{#3(#1)#4--#5(#2)#6}}
\Crefmultiformat{equation}{Equations~\textup{#2(#1)#3}}{ and \textup{#2(#1)#3}}
{, \textup{#2(#1)#3}}{, and \textup{#2(#1)#3}}
\Crefrangemultiformat{equation}{Equations~\textup{#3(#1)#4--#5(#2)#6}}%
{ and \textup{#3(#1)#4--#5(#2)#6}}{, \textup{#3(#1)#4--#5(#2)#6}}%
{, and \textup{#3(#1)#4--#5(#2)#6}}

% Make number non-italic in any environment.
\crefdefaultlabelformat{#2\textup{#1}#3}
%%%%%%%%%%%%%%%%%%%%%%%%%%%%%%%%%%%%%%%%%%%%%%%%%%%%%%%%%%%%%%%%%%%%%%%%%%%%%%%%%%%%%%%%%%%%%%%%%%%%%%%%%%%%%%%%%%%%%%%%%%%%%%%%%%%%%%%%%%%%%%%
\newtheorem{theorem}{Theorem}[section]
\newtheorem{lemma}[theorem]{Lemma}

\newtheorem{definition}[theorem]{Definition}% Use {\rm ...}
        % Use {\rm ...}

\numberwithin{equation}{section}
%%%%%%%%%%%%%%%%%%%%%%%%%%%%%%%%%%%%%%%%%%%%%%%HORIZONTAL DASH WITH DOUBLE INTEGRAL%%%%%%%%%%%%%%%%%%%%%%%%%%%%%%%%%%%%%%%%%%%%%%%%%%%%%%%%%%%%
\def\Yint#1{\mathchoice
{\YYint\displaystyle\textstyle{#1}}%
{\YYint\textstyle\scriptstyle{#1}}%
{\YYint\scriptstyle\scriptscriptstyle{#1}}%
{\YYint\scriptscriptstyle\scriptscriptstyle{#1}}%
\!\iint}
\def\YYint#1#2#3{{\setbox0=\hbox{$#1{#2#3}{\iint}$}
\vcenter{\hbox{$#2#3$}}\kern-.50\wd0}}
\def\longdash{-\mkern-9.5mu-} %USE THIS IF \usepackage{fourier} IS NOT USED.
\def\tiltlongdash{\rotatebox[origin=c]{18}{$\longdash$}}
\def\fiint{\Yint\tiltlongdash}
%%%%%%%%%%%%%%%%%%%%%%%%%%%%%%%%%%%%%%%%%%%%%%%

\def\XXint#1#2#3{{\setbox0=\hbox{$#1{#2#3}{\int}$}
\vcenter{\hbox{$#2#3$}}\kern-.50\wd0}}

%%%%%%%%%%%%%%%%%%%%%%%%%%%%%%%%%%%%%%%%%%%%%%%
\makeatletter
\def\namedlabel#1#2{\begingroup
\def\@currentlabel{#2}%
\label{#1}\endgroup
}
\makeatother
%%%%%%%%%%%%%%%%%%%%%%%%%%%%%%%%%%%%%%%%%%%%%%%%%%%%%%%%%%%%%%%%%%%%%%%%%%%%%%%%%%%%%%%%%%%%%%%%%%%%%%%%%%%%%%%%%%%%%%%%%%%%%%%%%%%%%%%%%%%%%%%
\makeatletter
\newcommand{\rmh}[1]{\mathpalette{\raisem@th{#1}}}
\newcommand{\raisem@th}[3]{\hspace*{-1pt}\raisebox{#1}{$#2#3$}}
\makeatother

%%%%%%%%%%%%%%%%%%%%%%%%%%%%%%%%%%%%%%%%%%%%%%%%%%%%%%%%%%%%%%%%%%%%%%%%%%%%%%%%%%%%%%%%%%%%%%%%%%%%%%%%%%%%%%%%%%%%%%%%%%%%%%%%%%%%%%%%%%%%%%%

%%%%%%%%%%%%%%%%%%%%%%%%%%%%%%%%%%%%%%%%%%%%%%%%%%%%%%%%%%%%%%%%%%%%%%%%%%%%%%%%%%%%%%%%%%%%%%%%%%%%%%%%%%%%%%%%%%%%%%%%%%%%%%%%%%%%%%%%%%%%%%%

\newcommand{\descref}[2]{\hyperref[#1]{\textcolor{black}{(}\textcolor{blue}{\bf #2}\textcolor{black}{)}}}
%%%%%%%%%%%%%%%%%%%%%%%%%%%%%%%%%%%%%%%%%%%%%%%%%%%%%%%%%%%%%%%%%%%%%%%%%%%%%%%%%%%%%%%%%%%%%%%%%%%%%%%%%%%%%%%%%%%%%%%%%%%%%%%%%%%%%%%%%%%%%%%

\newcommand{\dref}[2]{\hyperref[#1]{\textcolor{black}{(}\textcolor{blue}{\bf #2}\textcolor{black}{)}}}
%%%%%%%%%%%%%%%%%%%%%%%%%%%%%%%%%%%%%%%%%%%%%%%%%%%%%%%%%%%%%%%%%%%%%%%%%%%%%%%%%%%%%%%%%%%%%%%%%%%%%%%%%%%%%%%%%%%%%%%%%%%%%%%%%%%%%%%%%%%%%%%
%\usepackage[inline]{showlabels}

% \baselinestretch{0.8}
%%%%%%%%%%%%%%%%%%%%%%%%%%%%%%%%%%%%%%%%%%%%%%%SPACING BEFORE AND AFTER EQUATION
\makeatletter
\g@addto@macro\normalsize{%
\setlength\abovedisplayskip{3pt}
\setlength\belowdisplayskip{3pt}
\setlength\abovedisplayshortskip{1pt}
\setlength\belowdisplayshortskip{3pt}
}
%%%%%%%%%%%%%%%%%%%%%%%%%%%%%%%%%%%%%%%%%%%%%%%
\makeatletter
\def\ps@pprintTitle{%
\let\@oddhead\@empty
\let\@evenhead\@empty
\def\@oddfoot{}%
\let\@evenfoot\@oddfoot}
\makeatother
\newcounter{whitney}
\refstepcounter{whitney}

\newcounter{ineqcounter}
\refstepcounter{ineqcounter}

%%%%%%%%%%%%%%%%%%%%%%%%%%%%%%%%%%%%
% \newcounter{rtaskno}
% \DeclareRobustCommand{\rtask}[1]{%
%    \refstepcounter{rtaskno}%
%    \thertaskno\label{#1}}
%
%    
% \setcounter{rtaskno}{0}
% \section{Task \rtask{task:one}. Blah Blah..}
%
% \section{Task \rtask{task:two}. Blah Blah..}
%
% \section{Task \rtask{task:three}. Blah Blah..}
%
% \section{Task \rtask{task:four}. Blah Blah..}
%
% Task~\ref{task:four}
% \newcommand{\mycount}[1]{\item[N\thedesccount\label{#1}]\refstepcounter{desccount}}
% \newcommand{\myrefN}[1]{\hyperref[#1]{\textcolor{black}{(}\textcolor{blue}{\bf N\ref{#1}}\textcolor{black}{)}}}
%%%%%%%%%%%%%%%%%%%%%%%%%%%%%%%%%%%%
\begin{document}
\begin{frontmatter}
%\addbibresource{ref.bib}
\title{Gradient H\"older regularity in mixed local and nonlocal linear parabolic problem}

\author{Stuti Das}
\ead{stutid21@iitk.ac.in and stutidas1999@gmail.com}
\address{Indian Institute of Technology Kanpur, Uttar Pradesh, 208016, India}

\newcommand*{\avint}{\mathop{\, \rlap{--}\!\!\int}\nolimits}

\begin{abstract}
We prove the local H\"older regularity of weak solutions to the mixed local nonlocal parabolic equation of the form
\begin{equation*}
u_t-\Delta u+\text{P.V.}\int_{\mathbb{R}^{n}} {\frac{u(x,t)-u(y,t)}{{\left|x-y\right|}^{n+2s}}}dy=0, 
\end{equation*}
where $0<s<1$; for every exponent $\alpha_0\in(0,1)$. Here, $\Delta$ is the usual Laplace operator. Next, we show that the gradients of weak solutions are also $\alpha$-H\"older continuous for some $\alpha\in (0,1)$. Our approach is purely analytic and it is based on perturbation techniques.
\end{abstract}
\begin{keyword}
Laplacian, Fractional Laplacian, Local H\"older regularity.
\MSC[2020]35M10, 35R11.
\end{keyword}
\end{frontmatter}
\begin{singlespace}
\tableofcontents
\end{singlespace}
\section{Introduction}
Mixed local nonlocal problems are emerging areas of many recent engineering works. More specifically, these kinds of problems combine two operators with different orders of differentiation, the simplest model of this case being $-\Delta+(-\Delta)^s $, for $s\in(0,1)$. In this article, our main concern is the parabolic case of such operators given by $\frac{\partial}{\partial t}- \Delta+(-\Delta)^s,$ for $s\in(0,1)$. Due to the presence of a leading local operator together with a nonlocal lower order fractional term, it is expected that the same kind of regularity results as that of the local case should also hold here. We aim to show these regularity results using a perturbation technique, where we take the solution of the heat equation as the reference one and will use their regularity. Before going into the details, we give a brief history.\\
Regularity and higher regularity results for mixed local nonlocal operators have been obtained in the past few years, mainly using iteration techniques. Using the De Giorgi-Nash-Moser theory, Garain and Kinnunen \cite{garain2022regularity} have obtained the local boundedness, local H\"older regularity, Harnack inequality results for the weak solutions of quasilinear elliptic equations of the type
\begin{equation*}
- \Delta_p u+(-\Delta_p)^s u=0 \text{ in } \Omega, \quad 1<p<\infty,
\end{equation*}
where $\Omega$ is a bounded domain in $\mathbb{R}^n$.
The higher Hölder regularity for the solution to the above equation for $2\leq p<\infty$ is obtained in \cite{garain2023higher}. The article is based on the iteration ideas of Moser. Moreover, in \cite{garain2023higher}, the authors have shown that the solution is also locally Hölder continuous for all exponents $\alpha\in(0,1)$.\\
In \cite{su2022regularity}, Xifeng Su et al. have obtained the gradient H\"older regularity of the weak solution to the mixed local nonlocal semilinear elliptic problem of form
\begin{equation*}
\begin{array}{rcl}
    - \Delta u+(-\Delta)^s u=g(x,u) &\text{ in } &\Omega,\\
    u=0& \text{ in }&\mathbb{R}^n\backslash\Omega,\end{array}
\end{equation*}
where $0<s\leq 1/2% \frac{1}{2}
$ and for some $c>0$, $q\in[2,2^*_s]$, the function $g$ satisfies $|g(x,t)|\leq c(1+|t|^{q-1}) \text{ for a.e. } x\in\Omega, t\in\mathbb{R}$. Here $2^*_s$ is the critical fractional Sobolev exponent given by $2^*_s=\frac{2n}{n-2s}$. Under these conditions, they have shown that the solution is in $C^{1,\alpha}(\overline{\Omega})$ for all $\alpha\in (0,1)$. The $L^\infty$-boundedness of weak solutions (for all $0<s<1$) was also obtained.\\
Whereas in \cite{MG}, the authors combined two different orders of differentiation and have shown that weak solutions to the equation of the type 
\begin{equation*}
- \Delta_p u+(-\Delta_q)^s u=f,
\end{equation*}
where $p,q>1$; are locally Hölder continuous for all $\alpha\in(0,1)$, and their gradients are Hölder continuous for some exponent $\beta\in(0,1)$, by using perturbation techniques. Along with this, they have obtained some results on the boundary regularity of weak solutions. We refer [\citealp{MG}, Section 1.1] for details on the assumptions for the problem.
\\Coming to our problem, here we deal with the regularity theory for the equation
\begin{equation}{\label{prob}}
\partial_t u-\Delta u+\mathcal{L} u=0   \text{ in } \Omega_T,
\end{equation}
where $\Omega_T=\Omega\times(0,T)$ for $T>0$, and $\mathcal{L}$ is the operator formally defined by
\begin{equation*}
\mathcal{L} u=c_{n,s}\text { P.V. } \int_{\mathbb{R}^n}\frac{u(x, t)-u(y, t)}{|x-y|^{n+2s}} d y,\qquad x \in \mathbb{R}^n , 
\end{equation*}
where $c_{n,s}$ is a suitable normalization constant, and P.V. denotes the Cauchy Principal Value.\\
Regarding our problem, we list the works done in the areas of regularity and boundedness. The weak Harnack inequality for solutions to the equation \cref{prob} have been obtained by Garain and Kinnunen in \cite{garain2023weak}. Whereas for the quasilinear case, using De Giorgi-Nash-Moser techniques, Fang et al. \cite{par1}, have obtained that weak solutions to the equation \begin{equation*}
    \partial_t u-\Delta_p u+(-\Delta_p)^s u=0    \text{ in } \Omega_T,
\end{equation*}
are locally bounded for all $1<p<\infty$ under suitable assumptions, and are locally Hölder continuous for $p>2$. Garain and Kinnunen have obtained local boundedness results and continuity results for a more general quasilinear operator \cite{ParabolicRegularity}. In \cite{par2}, Shang et al. unified the Hölder continuity results and extended it for the case $p<2$. The authors again obtained the Harnack inequality and, as a corollary, the local Hölder continuity for $p>2$ case in \cite{par3}.\smallskip\\
\textbf{{Organization of the article:}}\smallskip\\
In this article, we will follow the perturbation ideas developed by Giuseppe Mingione and Cristiana De Filippis \cite{MG}. For this, we use the regularity properties of the heat equations, which are taken from \cite{GaryM}. In the next section, we fix some notations and describe the primary function spaces we need. Moreover, the definition of weak solutions for our context, as well as some auxiliary results, are provided. 
We use the Steklov averages to define weak solutions and will eventually employ their convergence properties to estimate our results. In section 3, we prove the fundamental Caccioppoli inequality for our solution and an energy estimate in order to transfer the regularity of the reference solution (of the heat equation) to our case. We will end up getting some nonlocal integral that we also estimate there. Then, the following two sections will contain the proofs of our main results \cref{hld1,hld2}.
\section{Preliminaries}
\subsection{Notations} 
$\bullet$ $n$ denotes the dimension of the space and $z=(x, t)$ denotes a point in $\mathbb{R}^n \times(0, T)$, where $(0,T)\subset \mathbb{R}.$\\
$\bullet$ $\Omega$ denotes an open bounded domain in $\mathbb{R}^n$ with boundary $\partial \Omega$ and for $0<T <\infty$, $\Omega_T:=\Omega \times(0, T)$.\\
$\bullet$ $|\mathrm{S}|$ denotes the Lebesgue measure of a measurable subset $\mathrm{S}\subset \mathbb{R}^n$.\\
$\bullet$  The balls and parabolic cylinders are denoted by the following notations:
\begin{equation*}
\begin{array}{c}
B_\varrho\equiv B_\varrho(x_0)=\left\{x \in \mathbb{R}^n:|x-x_0|<\varrho\right\}, 
I_\varrho\equiv I_\varrho(t_0)=\left\{t \in \mathbb{R}: t_0-\varrho^2<t<t_0\right\}\\\text{ and }Q_\varrho\equiv Q_{\varrho}(z_0)=B_\varrho(x_0) \times I_\varrho(t_0) .
\end{array}
\end{equation*}
$\bullet$ $\int$ denotes the integration with respect to either space or time only, and $\iint$ denotes the integration on the product space $\Omega \times \Omega$ or $\mathbb{R}^n \times \mathbb{R}^n$.\\
$\bullet$ Also, $\iiint$ denotes the integral over some subset of $\mathbb{R}^n \times \mathbb{R}^n \times(0, T)$ and $\fint$ denotes the average integral with respect to either the space variable or the time variable. \\ 
$\bullet$ We use the notation $a \lesssim b$ for $a \leq C b$, where $C$ is a constant which depends on the dimension $n$ or $s$ or both. \\
$\bullet$ The constants $C$ or $c$, which are used throughout the article, can change their value from line to line or even in the same line but will keep dependence on certain data.\\$\bullet$ The symbol $A\Subset B$ means the set $A$ is compactly contained in $B$.
\subsection{Function spaces}
\textbf{Fractional Sobolev Spaces:} Let $0<s<1$ and $\Omega$ be a open connected subset of $\mathbb{R}^n$. The fractional Sobolev space $W^{s, 2}(\Omega)$ is defined by
\begin{equation*}
    W^{s, 2}(\Omega)=\left\{u \in L^2(\Omega): \frac{|u(x)-u(y)|}{|x-y|^{\frac{n}{2}+s}} \in L^2(\Omega\times\Omega)\right\},
\end{equation*}
and endowed with the norm
\begin{equation*}{\label{fracsob}}
\|u\|_{W^{s, 2}(\Omega)}=\left(\int_{\Omega}|u(x)|^2 d x+\int_{\Omega} \int_{\Omega} \frac{|u(x)-u(y)|^2}{|x-y|^{n+2 s}}d x d y\right)^{1/2},
\end{equation*}
it becomes a reflexive Banach space. \\
The fractional Sobolev space with zero boundary values, denoted by $W_0^{s, 2}(\Omega)$, is the subspace of $W^{s, 2}(\mathbb{R}^n)$, consisting of functions vanishing outside $\Omega$, i.e. 
\begin{equation*}
W_0^{s, 2}(\Omega)=\left\{u \in W^{s, 2}(\mathbb{R}^n): u=0 \text { in } \mathbb{R}^n \backslash \Omega\right\}.
\end{equation*}
Indeed the space $W_0^{s, 2}(\Omega)$ is also reflexive. Readers can find the details regarding fractional Sobolev spaces in [\citealp{frac}, Section 2]. The classical Sobolev space is denoted by $W^{1,2}(\Omega)$. The parabolic Sobolev space $L^2(0, T ; W^{1,2}(\Omega)),$ for $ T>0$, consists of measurable functions $u$ on $\Omega \times(0, T)$ such that
\begin{equation*}
\|u\|_{L^2\left(0, T ; W^{1,2}(\Omega)\right)}=\left(\int_0^T\|u(\cdot, t)\|_{W^{1,2}(\Omega)}^2 d t\right)^{\frac{1}{2}}<\infty.
\end{equation*}
The space $L_{\textit{loc}}^2(0, T ; W_{\textit{loc}}^{1,2}(\Omega))$ is defined by requiring the conditions above for every $\Omega^{\prime} \times[t_1, t_2] \Subset \Omega \times(0, T)$. Here $\Omega^{\prime} \times[t_1, t_2] \Subset \Omega \times(0, T)$ denotes that $\overline{\Omega^{\prime}} \times\left[t_1, t_2\right]$ is a compact subset of $\Omega \times(0, T)$.
\\\\\textbf{Tail Spaces:} The regularity results require some finiteness conditions on the nonlocal tails. To this end, 
we recall a tail space as
\begin{equation*}
L_\alpha^q(\mathbb{R}^n):=\left\{v \in L_{\textit{loc}}^q(\mathbb{R}^n) : \int_{\mathbb{R}^n} \frac{|v(x)|^q}{1+|x|^{n+\alpha}} d x<+\infty\right\}, \quad q>0 \text { and } \alpha>0 .    
\end{equation*}
Then, a nonlocal tail of the supremum version is defined by
\begin{eqnarray}{\label{tailinf}}
{\operatorname{Tail}_{\infty}(v ; x_0, r, I)} {=\operatorname{Tail}_{\infty}(v ; x_0, r, t_0-T_1, t_0+T_2)} 
 {:=\underset{t \in I}{\operatorname{ess}\text{sup}}\left(r^{2s} \int_{\mathbb{R}^n \backslash B_r(x_0)} \frac{|v(x, t)|}{\left|x-x_0\right|^{n+2s}} d x\right),}
\end{eqnarray}
where $(x_0, t_0) \in \mathbb{R}^n \times(0, T)$ and the interval $I=\left[t_0-T_1, t_0+T_2\right] \subseteq(0, T)$. From these definitions, one can easily deduce that $\operatorname{Tail}_{\infty}(v ; x_0, r, I)$ is well-defined for any $v \in L^{\infty}(I, L_{2s}^{1}(\mathbb{R}^n))$. Also we note that for any $v \in L^{\infty}(I, L_{2s}^{1}(\mathbb{R}^n))$ and for any $k\in\mathbb{R}$, the quantity $\underset{t \in I}{\operatorname{ess}\text{sup}}\left(r^{2s} \int_{\mathbb{R}^n \backslash B_r(x_0)} \frac{|v(x, t)-k|}{\left|x-x_0\right|^{n+2s}} d x\right)$ is finite.
\subsection{Weak solutions}
We denote
\begin{equation*}
    \mathcal{E}(u, v, t):=\frac{1}{2} \int_{\mathbb{R}^n} \int_{\mathbb{R}^n}(u(x, t)-u(y, t))(v(x, t)-v(y, t))\times K(x, y, t)\, d x d y,
\end{equation*}
where $K(x,y,t)=\frac{1}{{{\left|x-y\right|}^{n+2s}}}$; and define our weak solution as:
\begin{definition}{\label{weaksolll1}}
     A function $u \in  L^{2}(I; W_{\textit{loc}}^{1,2}(\Omega))%\cap L^2(I ; W_{\textit{loc}}^{s, 2}(\Omega))
\cap C^0(I ; L_{\textit{loc}}^2(\Omega)) \cap L^{\infty}(I ; L_{2s}^{1}(\mathbb{R}^n))$ is a local weak solution to \cref{prob} if for any closed interval $I:=\left[t_1, t_2\right] \subseteq(0, T)$, there holds that
\begin{equation}{\label{soln1}}
\begin{array}{c}
 \int_{\Omega} u(x, t_2) \varphi(x, t_2)\, d x-\int_{t_1}^{t_2} \int_{\Omega} u(x, t) \partial_t \varphi(x, t) \,d x d t+\int_{t_1}^{t_2} \int_{\Omega} \nabla u\cdot\nabla \varphi \, d x d t+\int_{t_1}^{t_2} \mathcal{E}(u, \varphi, t) \,d t \\
=\int_{\Omega} u(x, t_1) \varphi(x, t_1)\, d x,
\end{array}
\end{equation}
for every function $\varphi \in L^2(I ; W^{1, 2}(\Omega)) %\cap L^2(I ; W^{s, 2}(\Omega)) 
\cap W^{1,2}(I ; L^2(\Omega))$ with the property that $\varphi$ has spatial support compactly contained in $\Omega$. %Further assume that $u\in L^{\infty}(I ; W^{s,2}(\mathbb{R}^n))$.
\end{definition}
\subsection{Steklov averages}
\begin{definition}{\label{stekav}}
     Let $v$ be a function in $L^1(\Omega_T)$. For $0<h<T$, we define the Steklov averages $v_h(\cdot, t)$ by
\begin{equation*}
\begin{array}{l}
 v_h(\cdot,t) := \left\{\begin{array}{l}
\frac{1}{h} \int_t^{t+h} v(\cdot, \tau) d \tau, \quad t\in(0,T-h], \\
0,\quad  t>T-h;
\end{array}\right.
\end{array}
\end{equation*}
and \begin{equation*}
\begin{array}{l}
 v_{\bar{h}} (\cdot,t):= \left\{\begin{array}{l}
\frac{1}{h} \int_{t-h}^t v(\cdot, \tau) d \tau, \quad t\in(h,T],\\
0, \quad t<h.
\end{array}\right.
\end{array}
\end{equation*}
\end{definition}
We state the following lemma without proof. 
\begin{lemma}{\label{steklov}} Let $v \in L^{ r}(0,T;L^q(\Omega))$. Then, as $h\to 0$, $v_h$ converges to $v$ in $L^{ r}(0,T-\varepsilon;L^q(\Omega))$ for every $\varepsilon \in(0, T)$. If $v \in C(0, T ; L^q(\Omega))$, then as $h\to 0$, $v_h(.,t)$ converges to  $v(\cdot, t)$ in $L^q(\Omega)$ for every $t \in(0, T-\varepsilon), \forall \varepsilon \in (0,T).$
A similar statement holds for $v_{\bar{h}}$. 
\end{lemma}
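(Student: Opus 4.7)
The plan is to treat this as a standard real-analysis fact about mollification in the time variable, working coordinate-wise in $t$ with values in the Banach space $L^q(\Omega)$. The key observation is that the Steklov average admits the representation
\[
v_h(\cdot,t) = \frac{1}{h}\int_t^{t+h} v(\cdot,\tau)\,d\tau = \int_0^1 v(\cdot, t+hs)\,ds,
\]
which displays $v_h$ as an average of time-translates of $v$. The main tools will be Minkowski's integral inequality applied in the Banach space $L^q(\Omega)$, the density of compactly supported continuous $L^q(\Omega)$-valued functions in $L^r(0,T;L^q(\Omega))$, and a standard three-epsilon approximation argument.

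For the first claim I would proceed in three steps. First, extending $v$ by zero outside $(0,T)$ and applying Minkowski's integral inequality in $L^q(\Omega)$, one obtains the pointwise bound $\|v_h(\cdot,t)\|_{L^q(\Omega)} \leq \frac{1}{h}\int_t^{t+h}\|v(\cdot,\tau)\|_{L^q(\Omega)}\,d\tau$, which, upon raising to the power $r$, integrating in $t$, and using Fubini, yields the uniform estimate $\|v_h\|_{L^r(0,T-\varepsilon;L^q(\Omega))} \leq \|v\|_{L^r(0,T;L^q(\Omega))}$ for $h<\varepsilon$. Second, I would approximate $v$ in $L^r(0,T;L^q(\Omega))$ by some $w \in C_c((0,T);L^q(\Omega))$; this density is standard for Bochner spaces. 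Third, for such a $w$, uniform continuity on compact subsets of $(0,T)$ yields $w_h \to w$ uniformly in $L^q(\Omega)$ on $[0,T-\varepsilon]$, and hence in $L^r(0,T-\varepsilon;L^q(\Omega))$. Combining these with the uniform $L^r$-bound from the first step via a triangle inequality gives the convergence for general $v$.

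For the second claim, assume $v \in C(0,T;L^q(\Omega))$ and fix $t \in (0,T-\varepsilon)$. Writing
\[
v_h(\cdot,t) - v(\cdot,t) = \frac{1}{h}\int_t^{t+h}\bigl(v(\cdot,\tau) - v(\cdot,t)\bigr)\,d\tau
\]
and applying Minkowski's inequality inside $L^q(\Omega)$ produces
\[
\|v_h(\cdot,t) - v(\cdot,t)\|_{L^q(\Omega)} \leq \sup_{\tau \in [t,t+h]}\|v(\cdot,\tau)-v(\cdot,t)\|_{L^q(\Omega)},
\]
which tends to zero as $h\to 0$ by the continuity of $\tau \mapsto v(\cdot,\tau)$ at $\tau = t$. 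The analogous statements for $v_{\bar h}$ follow by symmetric modifications, replacing $[t,t+h]$ by $[t-h,t]$ and adjusting the valid range of $t$ accordingly.

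I do not anticipate any genuine obstacle in this proof; it is a textbook argument about $L^p$-valued functions of time. The only mildly delicate point is the density of $C_c((0,T);L^q(\Omega))$ in $L^r(0,T;L^q(\Omega))$ in the Bochner sense, which I would simply cite rather than reprove.
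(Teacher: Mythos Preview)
Your argument is correct and entirely standard. Note, however, that the paper does not actually prove this lemma: it is stated without proof and the reader is referred to an external source (Lemmas~2.2 and~2.5 of the cited reference). Your approach---writing $v_h$ as an average of time-translates, bounding uniformly via Minkowski's integral inequality, and combining density of $C_c((0,T);L^q(\Omega))$ with a three-epsilon argument for the $L^r$ convergence, and using pointwise continuity for the $C(0,T;L^q)$ case---is exactly the textbook route one would expect, and it supplies what the paper only cites. There is no genuine difference in strategy to discuss, since the paper offers none; your write-up simply fills in the omitted details.
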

We refer to [\citealp{JQ}, Lemma $2.2,2.5$] for proofs. The same proof can be done for the spaces $L^2(I ; W_{\textit{loc}}^{s, 2}(\Omega))$, $L^2(I ; W_{\textit{loc}}^{1, 2}(\Omega)).$\\\\
In view of the above lemma, we give another equivalent definition of a weak solution.
\begin{definition}{\label{weaksolll2}}
 A function $u \in  L^{2}_{\textit{loc}}(0,T; W_{\textit{loc}}^{1,2}(\Omega))%\cap L^2(I ; W_{\textit{loc}}^{s, 2}(\Omega))
\cap C^0_{\textit{loc}}(0,T ; L_{\textit{loc}}^2(\Omega)) \cap L^{\infty}_{\textit{loc}}(0,T ; L_{2s}^{1}(\mathbb{R}^n))$ is a weak solution of \cref{prob}
if $u$ satisfies 
\begin{equation}{\label{steksol}}
\int_{\mathcal{K} \times\{t\}}\left\{u_{h, t} \varphi+(\nabla u)_h \cdot \nabla \varphi \right\} dx +\frac{1}{2} \int_{\mathbb{R}^n} \int_{\mathbb{R}^n} \left(u(x,t)-u(y,t)\right)_h\left(\varphi(x,t)-\varphi(y,t)\right)K(x,y,t)\,dxdy=0,
\end{equation}
for all $0<t<T-h$ and for all $\varphi \in W_0^{1, 2}(\mathcal{K}) %\cap W_0^{s, 2}(\mathcal{K})
\cap L_{\textit{loc}}^{\infty}(\Omega)$ with $\mathcal{K}\Subset \Omega$.
\end{definition}
Now we take $I =[a,b]\subset \mathbb{R}$ to be any interval and $h>0$. Given $v \in L^2(I, L^2_{\textit{loc}}(\Omega))$ we define $v_h \in C^0(I, L^2_{\textit{loc}}(\Omega))$ by
\begin{equation*}
v_h(\cdot, t)=\frac{1}{h} \int_t^{t+h} \tilde{v}(\cdot, s) d s, \quad \text { for each } a \leq t \leq b,
\end{equation*}
where $\tilde{v}(\cdot, t) \in L^2(\mathbb{R}, L^2_{\textit{loc}}(\Omega))$ is defined by
\begin{equation*}
\tilde{v}(\cdot, t)=\left\{\begin{aligned}
&v(\cdot, t), \quad\text { when } t \in I, \\
&0, \quad \text { when } t \in \mathbb{R} \backslash I .
\end{aligned}\right.
\end{equation*}
With these definitions, we state the following important result [\citealp{JQ}, Lemma 2.4] regarding Steklov averages:
\begin{lemma}{\label{steklov2}} $v_h(\cdot, t)$ as defined above satisfies for all $E\Subset \Omega$:
\begin{equation*}
    v_h(\cdot, t) \in L^2(E), \quad \forall t \in I
\end{equation*}
with
\begin{equation*}
\begin{gathered}
\left\|v_h(\cdot, t)\right\|_{L^2(E)} \leq \frac{1}{h^{1 / r}}\|v(\cdot, t)\|_{L^2\left(I, L^2(E)\right)}; \\
v_h%(\cdot, t)
\in C^0(I, L^2(E)) \cap L^{\infty}(I, L^2(E)) \text{ and }
v_h(\cdot, t): I \rightarrow L^2(E) \text { is uniformly continuous on } I .\\
v_h%(., t)
\in L^2(I,L^2(E))
\text{ and } ||v_h||_{L^2(I,L^2(E))}\leq ||v||_{L^2(I,L^2(E))}.
\end{gathered}
\end{equation*}
\end{lemma}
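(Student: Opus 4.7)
The plan is to verify the three listed properties of the Steklov-type averaging operator $v \mapsto v_h$ directly from its integral definition, relying only on Jensen's inequality (applied to the average over $[t,t+h]$), Fubini's theorem, and the absolute continuity of the Lebesgue integral. Since the zero extension $\tilde{v}$ belongs to $L^2(\mathbb{R}, L^2_{\textit{loc}}(\Omega))$ with $\|\tilde{v}\|_{L^2(\mathbb{R}, L^2(E))} = \|v\|_{L^2(I, L^2(E))}$ for every $E \Subset \Omega$, all three assertions reduce to standard estimates for convolutions of $\tilde{v}$ (in the $t$-variable) with the characteristic function $h^{-1}\mathbf{1}_{[-h,0]}$.

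First I would establish the pointwise $L^2(E)$-bound. For fixed $t \in I$, Jensen's inequality applied to the inner time-average gives
\begin{equation*}
\|v_h(\cdot, t)\|_{L^2(E)}^2 = \int_E \left| \frac{1}{h} \int_t^{t+h} \tilde{v}(x, s)\, ds \right|^2 dx \leq \frac{1}{h} \int_t^{t+h} \|\tilde{v}(\cdot, s)\|_{L^2(E)}^2\, ds \leq \frac{1}{h} \|v\|_{L^2(I, L^2(E))}^2,
\end{equation*}
which yields the first inequality (with the exponent $r=2$ as intended) and, taking the supremum in $t$, the $L^\infty(I, L^2(E))$ membership.

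Next I would address the continuity claim. For $t_1, t_2 \in I$ with $t_1 < t_2$, I would write the difference $v_h(\cdot, t_2) - v_h(\cdot, t_1)$ as $h^{-1}$ times the difference of two integrals over $[t_1, t_2]$ and $[t_1+h, t_2+h]$ (after a translation of variables), take $L^2(E)$-norms inside, and apply the Cauchy--Schwarz inequality in the $s$-variable. This produces an estimate of the form
\begin{equation*}
\|v_h(\cdot, t_2) - v_h(\cdot, t_1)\|_{L^2(E)} \leq \frac{2}{h} (t_2 - t_1)^{1/2} \|v\|_{L^2(I, L^2(E))},
\end{equation*}
which both proves continuity and gives uniform continuity on $I$ (in fact H\"older continuity of exponent $1/2$ with a constant independent of $t_1, t_2$). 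Combined with the bound from the previous paragraph, this delivers $v_h \in C^0(I, L^2(E)) \cap L^\infty(I, L^2(E))$.

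Finally, for the third item I would integrate the pointwise bound once more in $t$, but this time before applying Jensen I would exchange the order of integration via Fubini. Specifically, for each $s \in \mathbb{R}$ the set of $t \in I$ such that $t \le s \le t+h$ has measure at most $h$, so
\begin{equation*}
\|v_h\|_{L^2(I, L^2(E))}^2 \leq \int_I \frac{1}{h} \int_t^{t+h} \|\tilde{v}(\cdot, s)\|_{L^2(E)}^2\, ds\, dt = \int_{\mathbb{R}} \|\tilde{v}(\cdot, s)\|_{L^2(E)}^2 \cdot \frac{|\{t \in I : s-h \leq t \leq s\}|}{h}\, ds \leq \|v\|_{L^2(I, L^2(E))}^2,
\end{equation*}
proving the contraction bound. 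None of the steps present a genuine obstacle; the only point requiring minor care is the handling of the zero extension near the endpoints of $I$ when bounding the continuity modulus, but this is automatic because $\tilde{v} \equiv 0$ outside $I$ and therefore the same Cauchy--Schwarz estimate still applies with $\tilde{v}$ in place of $v$.
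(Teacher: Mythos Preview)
Your proof is correct and follows the standard route via Jensen/Cauchy--Schwarz, Fubini, and the explicit H\"older-$1/2$ modulus of continuity in $t$. The paper itself does not supply a proof of this lemma; it simply states the result and refers the reader to \cite{JQ}, Lemma~2.4, so there is nothing further to compare.
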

The same result as of \cref{steklov2} can be done for the space $L^2(I ; W_{\textit{loc}}^{1, 2}(\Omega))$ and $L^2(I ; W_{\textit{loc}}^{s, 2}(\Omega))$.
\subsection{Statements of the main results} We state the main theorems of this article as the following.
\begin{theorem}{\label{hld1}}
    Assume that $u$ is a locally bounded local weak solution of \cref{prob} in $\Omega_T$, then $u$ is locally H\"older continuous in $\Omega_T$ for all exponents $\alpha_0\in(0,1).$ More precisely for every $\alpha_0\in(0,1)$, there exists $r_*\equiv r_*(n,s,\alpha_0)\in (0,1)$ and $c \equiv c (n,s,\alpha_0
)
 \geq 1 $ such that
\begin{equation*}{\label{continuitystate}}
    \left(\fint_{t_0-r^2}^{t_0} \fint_{B_r\left(x_0\right)}\left|u(x,t)-(u)_{B_r(x_0)\times(t_0-r^2,t_0)}\right|^2 d x dt\right)^{\frac{1}{2}}\leq c\left(\frac{r}{r_*}\right)^{\alpha_0}\left(\operatorname{Tail}_{\infty}(u;x_0,r_*,t_0-r_*^2,t_0)+||u||_{L^2\left(\Omega_T\right)}\right)
\end{equation*}
holds whenever $B_r\Subset \Omega$, $(t_0-r^2,t_0)\Subset (0,T)$ and $0<r\leq r^*$.
\end{theorem}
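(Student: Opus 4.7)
The plan is to use a perturbation/comparison scheme, in the spirit of De Filippis--Mingione, combined with a Campanato-type excess-decay iteration. On each sub-cylinder $Q_r(z_0)\Subset Q_{r_*}(z_0)$ I would compare $u$ to the reference solution $v$ of the pure heat equation
\begin{equation*}
v_t-\Delta v=0\text{ in }Q_r(z_0),\qquad v=u\text{ on the parabolic boundary of }Q_r(z_0).
\end{equation*}
Thanks to the Steklov-averaged formulation of \cref{weaksolll2} together with \cref{steklov,steklov2}, this comparison is rigorous, and the difference $w:=u-v$ satisfies $w_t-\Delta w=-\mathcal{L}u$ in $Q_r(z_0)$ with vanishing parabolic boundary values.

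\textbf{Step 1 (energy comparison).} Testing the equation for $w$ with $w$ itself (through Steklov averages) and splitting
\begin{equation*}
\mathcal{L}u(x,t)=c_{n,s}\int_{B_r(x_0)}\frac{u(x,t)-u(y,t)}{|x-y|^{n+2s}}\,dy+c_{n,s}\int_{\mathbb{R}^n\setminus B_r(x_0)}\frac{u(x,t)-u(y,t)}{|x-y|^{n+2s}}\,dy,
\end{equation*}
I would control the near-field piece by the $W^{s,2}(B_r)$-seminorm of $u$ (which is available via the Caccioppoli inequality of Section~3 and the embedding $W^{1,2}(B_r)\hookrightarrow W^{s,2}(B_r)$) and the far-field piece by $r^{-2s}\operatorname{Tail}_\infty(u;x_0,r,I)+r^{-2s}|u(x,t)|$. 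Combining with a Poincar\'e inequality for $w$ and absorbing, this yields an estimate of the form
\begin{equation*}
\fint_{Q_r(z_0)}|w|^2\,dz\leq Cr^{2}\bigl[\operatorname{Tail}_\infty(u;x_0,r_*,I_*)+\|u\|_{L^2(\Omega_T)}\bigr]^{2}.
\end{equation*}

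\textbf{Step 2 (heat decay).} The reference solution $v$ is smooth in the interior of $Q_r(z_0)$, and standard parabolic Campanato estimates give, for every $\alpha_0\in(0,1)$ and every $\rho\leq r/2$,
\begin{equation*}
\fint_{Q_\rho(z_0)}|v-(v)_{Q_\rho(z_0)}|^{2}\,dz\leq C(\rho/r)^{2\alpha_0}\fint_{Q_r(z_0)}|v-(v)_{Q_r(z_0)}|^{2}\,dz.
\end{equation*}
\textbf{Step 3 (iteration).} Writing $u-(u)_{Q_\rho}=(v-(v)_{Q_\rho})+(w-(w)_{Q_\rho})$ and combining Steps~1 and~2, then choosing $\rho=\theta r$ with $\theta=\theta(\alpha_0)$ small enough to absorb the multiplicative constants, produces a one-step excess decay
\begin{equation*}
E(u,\theta r)\leq \tfrac{1}{2}E(u,r)+C\,r\bigl[\operatorname{Tail}_\infty(u;x_0,r_*,I_*)+\|u\|_{L^2(\Omega_T)}\bigr],
\end{equation*}
where $E(u,\rho):=(\fint_{Q_\rho(z_0)}|u-(u)_{Q_\rho(z_0)}|^{2}\,dz)^{1/2}$. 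A standard geometric-iteration lemma then converts this into $E(u,r)\leq C(r/r_*)^{\alpha_0}[\,\cdots\,]$ for every $\alpha_0\in(0,1)$, which is exactly the claim.

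\textbf{Main obstacle.} The technical core is Step~1: even though $w$ vanishes on the parabolic boundary of $Q_r$, the equation it satisfies is genuinely nonlocal because $\mathcal{L}u$ sees the solution over all of $\mathbb{R}^n$. Separating the near- and far-field pieces of $\mathcal{L}u$ without losing essential scaling in $r$, and carrying out the energy identity rigorously through the Steklov averages of \cref{weaksolll2}, is the most delicate step. A secondary but essential point is to show that $\operatorname{Tail}_\infty(u;x_0,r,I)$ at a small scale $r\ll r_*$ is controlled by $\operatorname{Tail}_\infty(u;x_0,r_*,I_*)+\|u\|_{L^2(\Omega_T)}$, so that the tail term in the one-step decay remains uniform along the dyadic iteration; this is precisely where the hypothesis $u\in L^\infty(I;L^1_{2s}(\mathbb{R}^n))$ is crucial.
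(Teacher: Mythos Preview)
Your overall strategy (compare with the caloric extension, use heat-equation decay, iterate) is the same as the paper's, but there is a genuine gap in Steps~1 and~3 that makes the iteration, as you have written it, not close.

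The problem is the claimed form of the comparison estimate. Testing the equation for $w=u-v$ does not give
\[
\fint_{Q_r}|w|^2\,dz\le Cr^{2}\bigl[\operatorname{Tail}_\infty(u;x_0,r_*,I_*)+\|u\|_{L^2(\Omega_T)}\bigr]^{2}
\]
with the tail at the \emph{fixed} outer scale $r_*$. What you actually obtain (and what the paper obtains in \cref{DIFFERENCE}) is a bound involving the tail at the \emph{current} scale $r$, namely a quantity of the type
\[
r^{\,2-\varepsilon}\int_{t_0-r^2}^{t_0}\Bigl(\int_{\mathbb{R}^n\setminus B_r}\frac{|u(y,t)-(u)_{Q_r}|}{|y-x_0|^{n+2s}}\,dy\Bigr)^{2}dt
\;+\;\fiint_{Q_r}|u-(u)_{Q_r}|^{2}\,dz.
\]
You cannot, a priori, bound $\operatorname{Tail}_\infty(u;x_0,r,I_r)$ uniformly by $\operatorname{Tail}_\infty(u;x_0,r_*,I_*)+\|u\|_{L^2}$: the annulus contribution $\int_{B_{r_*}\setminus B_r}|u|\,|y-x_0|^{-n-2s}dy$ blows up like $r^{-2s}$ as $r\to0$ unless you already know some H\"older decay of $u$, which is exactly what you are trying to prove. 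So the one-step decay you write in Step~3, with a fixed additive error $Cr[\operatorname{Tail}_\infty(u;x_0,r_*,I_*)+\|u\|_{L^2}]$, is not available, and your ``secondary but essential point'' is in fact the central obstruction.

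The paper's resolution is to \emph{include the (correctly scaled) tail in the excess functional} and iterate the sum
\[
\mathcal{A}(r):=\Bigl(r^{2-\varepsilon}\!\!\int_{t_0-r^2}^{t_0}\!\Bigl(\int_{\mathbb{R}^n\setminus B_r}\frac{|u-(u)_{Q_r}|}{|y-x_0|^{n+2s}}\Bigr)^{2}dt\Bigr)^{1/2}
+\Bigl(\fiint_{Q_r}|u-(u)_{Q_r}|^2\Bigr)^{1/2}.
\]
Passing from $\mathcal{A}(r)$ to $\mathcal{A}(\tau r)$ requires a separate ``snail'' lemma (the paper's \cref{decay}) controlling the tail at scale $\tau r$ by the tail at scale $r$ \emph{plus} an integral of the local oscillation over all intermediate scales $v\in(\tau r,r)$. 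This extra integral is then absorbed via a sharp fractional maximal operator $\mathrm{M}(x_0,r)=\sup_{v\le r}v^{-(2-\varepsilon)\alpha/2}\mathcal{A}(v)$ rather than by a naive geometric iteration on $E(u,r)$ alone. Without this coupled iteration of local oscillation and nonlocal tail, the scheme does not close.
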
 
\begin{theorem}{\label{hld2}}
Let $u$ be a locally bounded local weak solution of \cref{prob} in $\Omega_T$, then $\nabla u$ is locally H\"older continuous in $\Omega_T$. More precisely, $\exists \alpha\equiv\alpha(s,n) \in(0,1)$ and $\tilde{r}\equiv \tilde{r}(n,s)\in(0,1)$, such that for every $\Omega_0\Subset \Omega$ and $(\tau_1,\tau_2)\Subset(0,T)$, $\|\nabla u\|_{C^{0,\alpha}(\Omega_0\times(\tau_1,\tau_2))}\leq c$ holds with $c \equiv c(n,s,\alpha_0,||u||_{L^2(\Omega_T)},||u||_{\operatorname{Tail}_{\infty}(u;x_0,t_0,\tilde{r})}, \operatorname{dist}\{\Omega_0,\partial \Omega\},\tau_1,\tau_2,T
 )$.
    \end{theorem}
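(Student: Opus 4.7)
The plan is a perturbative Campanato-type iteration: on small parabolic cylinders, compare $u$ with the solution of the pure heat equation, transfer the sharp excess-decay estimates available for the heat equation, and treat the nonlocal term $\mathcal{L} u$ as a controllable perturbation thanks to \cref{hld1}. Fix $z_0=(x_0,t_0)$ and a cylinder $Q_r(z_0)\Subset\Omega_T$, and let $v$ be the unique weak solution of
\begin{equation*}
\partial_t v-\Delta v=0 \quad\text{in } Q_r(z_0),\qquad v=u \text{ on } \partial_p Q_r(z_0).
\end{equation*}
Classical heat-equation regularity (invoked from \cite{GaryM}) gives smoothness of $v$ together with the excess-decay estimate
\begin{equation*}
\fint_{Q_\rho(z_0)}\bigl|\nabla v-(\nabla v)_{Q_\rho(z_0)}\bigr|^2\,dz \;\le\; c\Bigl(\frac{\rho}{r}\Bigr)^{2\beta}\fint_{Q_{r/2}(z_0)}|\nabla v|^2\,dz, \qquad 0<\rho\le r/2,
\end{equation*}
for every $\beta\in(0,1)$, with $c=c(n,\beta)$.

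Next, set $w:=u-v$. Then $w$ weakly solves $\partial_t w-\Delta w=-\mathcal{L}u$ in $Q_r(z_0)$ with zero parabolic-boundary values, and the Steklov-averaged weak formulation from \cref{weaksolll2} permits testing with $w$, producing
\begin{equation*}
\sup_t \int_{B_r(x_0)}|w|^2\,dx+\iint_{Q_r(z_0)}|\nabla w|^2\,dz \;\le\; c\Bigl|\int_{t_0-r^2}^{t_0}\mathcal{E}(u,w,t)\,dt\Bigr|.
\end{equation*}
I would split $\mathcal{E}(u,w,\cdot)$ into a near-diagonal piece $|x-y|<r$ and a tail piece $|x-y|\ge r$. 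By \cref{hld1}, $u$ is locally $\alpha_0$-Hölder for every $\alpha_0\in(0,1)$, so the near-diagonal part can be absorbed using the $W^{s,2}$ seminorms of $u$ and $w$ controlled by the Caccioppoli estimate of Section~3; the tail part is dominated by $\operatorname{Tail}_{\infty}(u;x_0,r,\cdot)$ times $\|w\|_{L^1}$. Combining these yields a comparison bound of the form
\begin{equation*}
\iint_{Q_{r/2}(z_0)}|\nabla w|^2\,dz \;\le\; c\, r^{n+2+2\gamma}
\end{equation*}
for some $\gamma=\gamma(n,s)>0$, with $c$ depending on the quantities listed in \cref{hld2}.

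Using $\nabla u=\nabla v+\nabla w$ and combining the two estimates, one obtains the decay inequality
\begin{equation*}
\Phi(\rho)\le c_0\Bigl[(\rho/r)^{n+2+2\beta}\Phi(r)+r^{n+2+2\gamma}\Bigr] \qquad\text{for }\rho\le r/2,
\end{equation*}
for the Campanato excess $\Phi(\rho):=\int_{Q_\rho(z_0)}|\nabla u-(\nabla u)_{Q_\rho(z_0)}|^2\,dz$. Choosing $\beta>\gamma$ and iterating along the dyadic chain $\rho\mapsto\theta\rho$ via a standard Campanato-type lemma yields $\Phi(\rho)\le c\,\rho^{n+2+2\alpha}$ for some $\alpha\in(0,1)$ depending on $(s,n)$; parabolic Campanato theory then delivers $\nabla u\in C^{0,\alpha}_{\mathrm{loc}}(\Omega_T)$ with the claimed quantitative dependence of the constant.

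The principal difficulty will be the treatment of $\mathcal{L}u$ in the comparison step: when $2s\ge 1$ the kernel singularity $|x-y|^{-(n+2s)}$ is not tamed by any Hölder regularity of $u$ below Lipschitz, so one cannot simply bound $\mathcal{L}u$ pointwise as a forcing. The remedy is to retain the P.V.\ bilinear structure $\mathcal{E}(u,w,\cdot)$ and exploit its antisymmetric cancellation, estimating it by Cauchy--Schwarz against the $W^{s,2}$ norms of $u$ and $w$ coming from the Caccioppoli inequality of Section~3. Accurate bookkeeping of the resulting dependences on $\operatorname{Tail}_{\infty}(u)$ and $\|u\|_{L^2(\Omega_T)}$, and in particular ensuring that the tail contribution does not spoil the power of $r$ needed for the Campanato iteration, will be the technical heart of the argument.
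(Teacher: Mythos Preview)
Your proposal is essentially the paper's own argument: compare $u$ with the caloric function sharing its parabolic boundary data, control $\int|\nabla(u-v)|^2$ by estimating the bilinear form $\mathcal{E}(u,u-v,\cdot)$ (near-diagonal part via the $W^{s,2}$ seminorm and the $C^{0,\alpha_0}$ continuity from \cref{hld1}, far part via the tail), invoke the heat-equation excess decay for $\nabla v$, and conclude by Campanato. The only cosmetic difference is in the final step: rather than running a dyadic iteration lemma on $\Phi(\rho)\le c_0[(\rho/r)^{n+2+2\beta}\Phi(r)+r^{n+2+2\gamma}]$, the paper first records (as a consequence of \cref{hld1} plus Caccioppoli) the almost-Lipschitz bound $\fint_{Q_{r/4}}|\nabla u|^2\le c\,r^{-2\lambda}$ for every $\lambda>0$, then balances the two terms in one shot by taking $v=r^{1+(1-s)/(n+2)}/16$ and $\lambda=(1-s)/(2n+4)$, which yields directly $\fint_{Q_v}|\nabla u-(\nabla u)_{Q_v}|^2\le c\,v^{2\alpha}$ with the explicit exponent $\alpha=(1-s)/(2n+4)$.
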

\subsection{Auxiliary results}
We begin with the basic Poincar\'{e} inequality, which can be found in [\citealp{LCE}, Chapter 5, Section 5.8.1].
\begin{lemma}{\label{p}}
  Let $\Omega\subset \mathbb{R}^n$ be a bounded domain with $C^1$ boundary. Then there exist a positive constant $C>0$ depending only on $n$ and $\Omega$, such that \begin{equation*} 
  \int_\Omega \left|u\right|^2 dx\leq C\int_\Omega \left|\nabla u\right|^2 dx, \qquad\forall u\in W^{1,2}_0(\Omega).
  \end{equation*}
  Specifically, if we take $\Omega=B_\varrho$, then we will get for all $u\in W^{1,2}(B_\varrho)$,
  \begin{equation*}
  \fint_{B_{\varrho}}\left|u-(u)_{B_\varrho}\right|^2dx \leq c \varrho^2 \fint_{B_{\varrho}}\left|\nabla u\right|^2dx,
  \end{equation*}
  where $c$ is a constant depending only on $n$ and $(u)_{B_\varrho}$ denotes the average of $u$ in $B_\varrho$.
 \end{lemma}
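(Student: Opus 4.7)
The plan is to establish both inequalities by the standard compactness-and-contradiction argument based on the Rellich--Kondrachov theorem, which is exactly the approach of [LCE, Chapter 5, Section 5.8.1].

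For the first inequality, I would argue by contradiction: assume there exists no such constant $C$, so that for each $k\in\mathbb{N}$ one can find $u_k\in W^{1,2}_0(\Omega)$ satisfying $\|u_k\|_{L^2(\Omega)}=1$ and $\|\nabla u_k\|_{L^2(\Omega)}\le 1/k$. The sequence $\{u_k\}$ is then bounded in $W^{1,2}_0(\Omega)$, so by Rellich--Kondrachov, along a subsequence $u_k\to u$ strongly in $L^2(\Omega)$ for some $u\in L^2(\Omega)$ with $\|u\|_{L^2(\Omega)}=1$. Since $\nabla u_k\to 0$ in $L^2(\Omega)$, the distributional gradient of the limit vanishes, so $u$ is locally constant. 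Finally, because $W^{1,2}_0(\Omega)$ is weakly closed and the zero trace passes to the weak $W^{1,2}$ limit, $u\in W^{1,2}_0(\Omega)$, forcing $u\equiv 0$ on each connected component, contrary to $\|u\|_{L^2(\Omega)}=1$.

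For the Poincar\'e--Wirtinger inequality on a ball, I would first prove the statement for $B_1\subset\mathbb{R}^n$. If it failed, one could find $v_k\in W^{1,2}(B_1)$ with $(v_k)_{B_1}=0$, $\|v_k\|_{L^2(B_1)}=1$, and $\|\nabla v_k\|_{L^2(B_1)}\to 0$. Rellich--Kondrachov again yields $v_k\to v$ in $L^2(B_1)$, with $v$ satisfying $\nabla v=0$ and hence constant on $B_1$; the preserved mean-zero condition $(v)_{B_1}=0$ then forces $v\equiv 0$, contradicting $\|v\|_{L^2(B_1)}=1$. To pass from $B_1$ to a general ball $B_\varrho=B_\varrho(x_0)$, I would use the scaling $v(y):=u(x_0+\varrho y)$ for $u\in W^{1,2}(B_\varrho)$, apply the $B_1$-estimate to $v-(v)_{B_1}=v-(u)_{B_\varrho}$, and undo the change of variables; the two Jacobians yield exactly the factor $\varrho^2$ on the right-hand side and the averaged formulation on both sides.

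No substantial obstacle is expected: both inequalities follow from the same compactness mechanism and the only point deserving care is that, by construction, the constant $C$ in the first inequality depends solely on $n$ and $\Omega$, while in the second inequality scale invariance guarantees that $c$ depends only on $n$, independently of $\varrho$ and of $x_0$.
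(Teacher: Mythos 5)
The paper states this lemma without proof, simply citing [LCE, Chapter 5, Section 5.8.1], and your normalization--compactness argument via Rellich--Kondrachov (plus scaling from $B_1$ to $B_\varrho$) is precisely the standard proof given in that reference. Your proposal is correct and consistent with the paper's intended justification.
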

 The following version of Poincar\'{e} inequality holds for fractional Sobolev spaces.
 \begin{lemma}{\label{fracpoin}} Let $s \in(0,1), B_{\varrho} \subset \mathbb{R}^n$ be a ball. If $u \in W^{s,2}(B_{\varrho})$, then
\begin{equation*}
\left(\fint_{B_{\varrho}}\left|u-(u)_{B_\varrho}\right|^2 d x\right)^{1 / 2} \leq c \varrho^s\left(\int_{B_{\varrho}} \fint_{B_{\varrho}} \frac{|u(x)-u(y)|^2}{|x-y|^{n+2s}} d x d y\right)^{1 / 2}
\end{equation*}
holds with $c \equiv c(n, s)$.
\end{lemma}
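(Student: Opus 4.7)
The plan is to rewrite $u(x)-(u)_{B_\varrho}$ as a double average and then artificially insert the singular kernel. Using $(u)_{B_\varrho}=\fint_{B_\varrho}u(y)\,dy$, I would write
\[
u(x)-(u)_{B_\varrho}=\fint_{B_\varrho}\bigl(u(x)-u(y)\bigr)\,dy,
\]
and then apply Jensen's inequality (i.e.\ Cauchy--Schwarz on the probability measure $dy/|B_\varrho|$) to bring the square inside:
\[
|u(x)-(u)_{B_\varrho}|^2\le \fint_{B_\varrho}|u(x)-u(y)|^2\,dy.
\]

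Next I would integrate in $x$ over $B_\varrho$ and multiply and divide the integrand by $|x-y|^{n+2s}$. Since $x,y\in B_\varrho$ we have the uniform bound $|x-y|\le 2\varrho$, so
\[
\fint_{B_\varrho}\fint_{B_\varrho}|u(x)-u(y)|^2\,dy\,dx\le (2\varrho)^{n+2s}\fint_{B_\varrho}\fint_{B_\varrho}\frac{|u(x)-u(y)|^2}{|x-y|^{n+2s}}\,dy\,dx.
\]
Converting one of the two averages into an unnormalized integral costs a factor $|B_\varrho|^{-1}\simeq \varrho^{-n}$, which combines with $\varrho^{n+2s}$ to give exactly the prefactor $c\,\varrho^{2s}$ predicted by the statement. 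Taking a square root then yields the claimed inequality with $c\equiv c(n,s)$.

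There is no real obstacle here: the argument is a one-line application of Jensen plus the trivial diameter bound on $|x-y|$. The only thing to be mildly careful about is keeping track of which of the integrals on the right-hand side is an average and which is a full integral, since this is precisely what produces the correct scaling in $\varrho$. Note that this proof only uses $u\in L^2(B_\varrho)$ together with finiteness of the Gagliardo seminorm over $B_\varrho$, so no extension or covering argument is needed and the assumption $u\in W^{s,2}(B_\varrho)$ is used in its bare form.
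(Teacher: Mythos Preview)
Your argument is correct: the Jensen step combined with the diameter bound $|x-y|\le 2\varrho$ gives exactly the right scaling $\varrho^{2s}$ once one of the two averages on the right is converted back to a full integral. The paper itself states this lemma without proof, so there is no comparison to make; your one-line derivation is the standard one.
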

The following result asserts that the classical Sobolev space is continuously embedded in the fractional Sobolev space; for details, one can see [\citealp{frac}, Proposition 2.2]. The idea applies an extension property of $\Omega$ so that we can extend functions from $W^{1,2}(\Omega)$ to $W^{1,2}(\mathbb{R}^n)$ and that the extension operator is bounded.
\begin{lemma}{\label{embedding}}
    Let $\Omega$ be a smooth bounded domain in $\mathbb{R}^n$ and $0<s<1$. There exists a positive constant $C=C(\Omega, n, s)$ such that
\begin{equation*}
\|u\|_{W^{s, 2}(\Omega)} \leq C\|u\|_{W^{1,2}(\Omega)},
\end{equation*}
for every $u \in W^{1,2}(\Omega)$.
\end{lemma}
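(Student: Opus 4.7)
The plan is a classical three-step estimate: handle the $L^2$ part trivially, apply a Sobolev extension to $\mathbb{R}^n$, and then control the Gagliardo seminorm
$[u]_{s,2;\Omega}^2 := \iint_{\Omega\times\Omega}\frac{|u(x)-u(y)|^2}{|x-y|^{n+2s}}\,dx\,dy$
by splitting it into a near-diagonal and a far-diagonal part.

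First I would note that $\|u\|_{L^2(\Omega)}\le \|u\|_{W^{1,2}(\Omega)}$ is immediate, so it remains to bound $[u]_{s,2;\Omega}^2$ by the full $W^{1,2}$-norm. Because $\Omega$ has smooth boundary, the classical Sobolev extension theorem gives a bounded linear operator $E\colon W^{1,2}(\Omega)\to W^{1,2}(\mathbb{R}^n)$ with $\|Eu\|_{W^{1,2}(\mathbb{R}^n)}\le C(\Omega,n)\|u\|_{W^{1,2}(\Omega)}$. Writing $\tilde u:=Eu$, I would argue by density and assume $\tilde u$ is smooth, recovering the general statement by approximation.

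For the seminorm I would fix a cutoff $r_0>0$ (e.g.\ $r_0=1$) and write
\begin{equation*}
[u]_{s,2;\Omega}^2
= \iint_{(\Omega\times\Omega)\cap\{|x-y|<r_0\}}\frac{|u(x)-u(y)|^2}{|x-y|^{n+2s}}\,dxdy
+ \iint_{(\Omega\times\Omega)\cap\{|x-y|\ge r_0\}}\frac{|u(x)-u(y)|^2}{|x-y|^{n+2s}}\,dxdy
=: \mathrm{I}+\mathrm{II}.
\end{equation*}
On the far-diagonal piece the kernel is uniformly bounded by $r_0^{-n-2s}$, so the trivial estimate $|u(x)-u(y)|^2\le 2|u(x)|^2+2|u(y)|^2$ and Fubini yield $\mathrm{II}\le C(n,s,r_0)\,|\Omega|\,\|u\|_{L^2(\Omega)}^2$. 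For $\mathrm{I}$ I would use the pointwise bound
$|\tilde u(x)-\tilde u(y)|^2\le |x-y|^2\int_0^1|\nabla\tilde u(x+t(y-x))|^2\,dt$
(valid for smooth $\tilde u$ by the fundamental theorem of calculus and Cauchy–Schwarz), substitute, perform the change of variables $h=y-x$, and apply Fubini to obtain
\begin{equation*}
\mathrm{I}\;\le\;\int_0^1\!\int_{|h|<r_0}\!\frac{1}{|h|^{n+2s-2}}\int_{\mathbb{R}^n}|\nabla\tilde u(x+th)|^2\,dx\,dh\,dt
\;\le\;\Bigl(\int_{|h|<r_0}|h|^{2-n-2s}\,dh\Bigr)\|\nabla\tilde u\|_{L^2(\mathbb{R}^n)}^2.
\end{equation*}
Combining $\mathrm{I}$ and $\mathrm{II}$ with the extension bound $\|\tilde u\|_{W^{1,2}(\mathbb{R}^n)}\le C_\Omega\|u\|_{W^{1,2}(\Omega)}$ then gives the desired inequality.

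The only real point to verify is convergence of the radial factor $\int_{|h|<r_0}|h|^{2-n-2s}\,dh = C_n\int_0^{r_0}r^{1-2s}\,dr$; this is precisely where the hypothesis $s<1$ enters, since $1-2s>-1$ makes the singularity at the origin integrable. Everything else (Fubini, the smooth-approximation step, the pointwise gradient bound, and the extension theorem) is standard, which also explains why the final constant depends only on $\Omega$, $n$, and $s$.
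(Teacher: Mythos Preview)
Your argument is correct and is precisely the standard proof the paper has in mind: the paper does not prove this lemma but cites \cite{frac}, Proposition~2.2, explicitly noting that ``the idea applies an extension property of $\Omega$ so that we can extend functions from $W^{1,2}(\Omega)$ to $W^{1,2}(\mathbb{R}^n)$''. Your three steps (trivial $L^2$ part, Sobolev extension, near/far splitting of the Gagliardo seminorm with the fundamental theorem of calculus on the near-diagonal piece) are exactly the proof in that reference, so nothing further is needed.
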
 
The next embedding result for the fractional Sobolev spaces with zero boundary value follows from [\citealp{frac2}, Lemma 2.1]. The fundamental difference of it compared to \cref{embedding} is that the result holds for any bounded domain (without any condition of smoothness on the boundary) since for the Sobolev spaces with zero boundary value, we always have a zero extension to the complement.
\begin{lemma}{\label{embedding2}} Let $\Omega$ be a bounded domain in $\mathbb{R}^n$ and $0<s<1$. There exists a positive constant $C=C(n, s, \Omega)$ such that
\begin{equation*}
\int_{\mathbb{R}^n} \int_{\mathbb{R}^n} \frac{|u(x)-u(y)|^2}{|x-y|^{n+2 s}} d x d y \leq C \int_{\Omega}|\nabla u|^2 d x,
\end{equation*}
for every $u \in W_0^{1,2}(\Omega)$. Here, we consider the zero extension of $u$ to the complement of $\Omega$.
\end{lemma}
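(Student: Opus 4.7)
The plan is to reduce the full Gagliardo double integral over $\mathbb{R}^n\times\mathbb{R}^n$ to one over a smooth auxiliary bounded set, at which point \cref{embedding} becomes directly applicable. Fix any $C^1$ bounded domain $\tilde\Omega$ (a large ball, for instance) with $\Omega \Subset \tilde\Omega$, and set $d_0 := \dist(\Omega,\partial\tilde\Omega)>0$; note that $d_0$ and $\tilde\Omega$ depend only on $\Omega$. Because $u$ is extended by zero outside $\Omega$, the integrand $|u(x)-u(y)|^2|x-y|^{-n-2s}$ vanishes identically on $(\mathbb{R}^n\setminus\tilde\Omega)\times(\mathbb{R}^n\setminus\tilde\Omega)$ and on the cross pieces $(\tilde\Omega\setminus\Omega)\times(\mathbb{R}^n\setminus\tilde\Omega)$. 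After splitting $\mathbb{R}^n\times\mathbb{R}^n$ into the four blocks associated to $\tilde\Omega$ and $\mathbb{R}^n\setminus\tilde\Omega$, only two contributions survive: the interior block over $\tilde\Omega\times\tilde\Omega$ and the cross block over $\Omega\times(\mathbb{R}^n\setminus\tilde\Omega)$, plus its symmetric twin.

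For the interior block, the $C^1$ regularity of $\partial\tilde\Omega$ lets me invoke \cref{embedding} on $\tilde\Omega$, yielding $\|u\|_{W^{s,2}(\tilde\Omega)}^2 \leq C(n,s,\tilde\Omega)\|u\|_{W^{1,2}(\tilde\Omega)}^2$. The zero extension gives $\|u\|_{W^{1,2}(\tilde\Omega)} = \|u\|_{W^{1,2}(\Omega)}$, and the classical Poincaré inequality (\cref{p}), which applies to $W_0^{1,2}(\Omega)$ without any boundary regularity assumption on $\Omega$, upgrades the full $W^{1,2}(\Omega)$-norm to a constant multiple of $\|\nabla u\|_{L^2(\Omega)}^2$.

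For the cross block, one uses the uniform lower bound $|x-y|\geq d_0$ available whenever $x\in\Omega$ and $y\in\mathbb{R}^n\setminus\tilde\Omega$; a polar computation then bounds $\int_{\mathbb{R}^n\setminus\tilde\Omega}|x-y|^{-n-2s}\,dy$ uniformly in $x\in\Omega$ by $\frac{|S^{n-1}|}{2s}d_0^{-2s}$. Integrating in $x$ controls the cross block by a constant multiple of $\|u\|_{L^2(\Omega)}^2$, which Poincaré absorbs once more into $\|\nabla u\|_{L^2(\Omega)}^2$. Summing the interior and cross contributions yields the claimed inequality with a constant of the form $C(n,s,\Omega)$. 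The only conceptual point worth flagging is the step that would otherwise be the obstacle: \cref{embedding} requires $C^1$ boundary, a regularity the hypothesis does not place on $\Omega$; introducing the auxiliary smooth enlargement $\tilde\Omega$ is precisely the device that bypasses this, and because the cross block receives no contribution from $(\tilde\Omega\setminus\Omega)\times(\mathbb{R}^n\setminus\tilde\Omega)$, no additional geometric hypothesis on $\partial\Omega$ (such as a Hardy inequality) is needed.
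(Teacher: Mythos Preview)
Your argument is correct. The paper does not supply its own proof of this lemma; it merely cites an external reference, so there is no in-paper argument to compare against. Your route---enlarging $\Omega$ to a smooth $\tilde\Omega$, invoking \cref{embedding} on $\tilde\Omega$, and handling the far-field cross term via the separation $d_0$ and Poincar\'e---is a clean, self-contained proof built from tools already stated in the paper.

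One minor caveat: as written, \cref{p} carries a $C^1$ boundary hypothesis, whereas you apply Poincar\'e on the original (possibly irregular) $\Omega$. Your claim that the $W_0^{1,2}$ Poincar\'e inequality holds on any bounded domain is of course standard and correct, but strictly speaking it is not the statement of \cref{p}; if you want to stay entirely within the paper's toolkit you could instead note that $u\in W_0^{1,2}(\Omega)\subset W_0^{1,2}(\tilde\Omega)$ by zero extension and apply \cref{p} on the $C^1$ domain $\tilde\Omega$.
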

One particular version of \cref{embedding2} can be found in [\citealp{MG}, Lemma 2.2].
\begin{lemma}{\label{fracemb}} Let $B_{\varrho} \subset \mathbb{R}^n$ be a ball. If $u \in W_0^{1, 2}(B_{\varrho})$, then $u \in W^{s, 2}(B_{\varrho})$ and
\begin{equation*}
\left(\int_{B_{\varrho}} \fint_{B_{\varrho}} \frac{|u(x)-u(y)|^2}{|x-y|^{n+2s}} d x d y\right)^{1 / 2} \leq c \varrho^{1-s}\left(\fint_{B_{\varrho}}|\nabla u|^2 d x\right)^{1 / 2}
\end{equation*}
holds with $c \equiv c(n,  s)$.
\end{lemma}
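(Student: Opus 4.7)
The plan is to deduce the claimed scale-invariant estimate on $B_\varrho$ from Lemma \ref{embedding2} applied on the unit ball $B_1$, via a dilation argument that makes the exponent $1-s$ of $\varrho$ appear naturally from the differing scalings of the Dirichlet energy and the $W^{s,2}$-seminorm. The point of passing to the unit ball is that Lemma \ref{embedding2} supplies a constant depending only on $n$, $s$ and the domain, and once the domain is fixed to be $B_1$ that constant is purely $c(n,s)$, as required.

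First I would set $v(x) := u(\varrho x)$ for $x \in B_1$. Since $u \in W_0^{1,2}(B_\varrho)$, the rescaled function $v$ lies in $W_0^{1,2}(B_1)$, so (extending by zero to $\mathbb{R}^n$) Lemma \ref{embedding2} applied on $\Omega = B_1$ gives $\int_{\mathbb{R}^n}\int_{\mathbb{R}^n} \frac{|v(x)-v(y)|^2}{|x-y|^{n+2s}}\,dx\,dy \leq c(n,s) \int_{B_1} |\nabla v|^2 \, dx$. A routine change of variables $x \mapsto \varrho x$, $y \mapsto \varrho y$ produces the two scaling identities $\int_{B_1} |\nabla v|^2\,dx = \varrho^{2-n} \int_{B_\varrho} |\nabla u|^2\,dy$ and $\int_{B_1}\int_{B_1} \frac{|v(x)-v(y)|^2}{|x-y|^{n+2s}}\,dx\,dy = \varrho^{2s-n} \int_{B_\varrho}\int_{B_\varrho} \frac{|u(x')-u(y')|^2}{|x'-y'|^{n+2s}}\,dx'\,dy'$, which together yield the factor $\varrho^{2-2s}$ that becomes $\varrho^{1-s}$ after the final square root.

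To conclude, I would drop the outer double integral from $\mathbb{R}^n \times \mathbb{R}^n$ down to $B_1 \times B_1$ (legal because the integrand is nonnegative), substitute the two scaling identities, and divide through by $|B_\varrho|$ so that the outer $\int_{B_\varrho}$ on the left and the $\int_{B_\varrho}$ on the right are simultaneously normalized into the sliced averages $\fint$ appearing in the statement. Taking square roots gives the inequality with constant depending only on $n$ and $s$. There is no genuine analytic obstacle here; the only potential pitfall is bookkeeping the $|B_\varrho|=\varrho^n|B_1|$ factors consistently so that the powers of $\varrho$ in the fractional seminorm and in the $L^2$ norm of $\nabla u$ collapse to exactly $\varrho^{2-2s}$ before rooting, which is purely computational.
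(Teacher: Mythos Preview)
Your argument is correct: rescaling to the unit ball, applying \cref{embedding2} there to obtain a constant depending only on $(n,s)$, and then tracking the scaling of the two seminorms is exactly the standard route, and your bookkeeping of the powers of $\varrho$ is right. The paper itself does not supply a proof of this lemma but simply cites it from \cite{MG}; your derivation is precisely the argument one would give to justify the citation.
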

The following useful lemma can be found in [\citealp{MG}, Lemma 3.2].
\begin{lemma}{\label{Ming}} Let $s\in(0,1)$, 
$w \in L_{\textit{loc}}^2(\mathbb{R}^n)$ and $B_\varrho(x_0) \subset \mathbb{R}^n$ be a ball. Then there holds
\begin{equation*}
\int_{\mathbb{R}^n \backslash B_\varrho} \frac{|w(y)|}{\left|y-x_0\right|^{n+2s}} d y \leq \frac{c}{\varrho^s}\left(\int_{\mathbb{R}^n \backslash B_\varrho} \frac{|w(y)|^2}{\left|y-x_0\right|^{n+2s}}dy\right)^{1/2},
\end{equation*}
where $c \equiv c(n, s)$.
\end{lemma}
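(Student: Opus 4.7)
The proposal is to prove this by a direct Cauchy--Schwarz argument on the complement of the ball, splitting the weight $|y-x_0|^{-(n+2s)}$ evenly between the two factors. Writing
\begin{equation*}
\frac{|w(y)|}{|y-x_0|^{n+2s}} = \frac{|w(y)|}{|y-x_0|^{(n+2s)/2}}\cdot\frac{1}{|y-x_0|^{(n+2s)/2}},
\end{equation*}
Cauchy--Schwarz on $L^2(\mathbb{R}^n\setminus B_\varrho)$ yields
\begin{equation*}
\int_{\mathbb{R}^n\setminus B_\varrho}\frac{|w(y)|}{|y-x_0|^{n+2s}}\,dy \le \left(\int_{\mathbb{R}^n\setminus B_\varrho}\frac{|w(y)|^2}{|y-x_0|^{n+2s}}\,dy\right)^{1/2}\left(\int_{\mathbb{R}^n\setminus B_\varrho}\frac{1}{|y-x_0|^{n+2s}}\,dy\right)^{1/2}.
\end{equation*}

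Next I would compute the purely geometric second factor by polar coordinates centered at $x_0$. Since $n+2s>n$, the integral is convergent on $\mathbb{R}^n\setminus B_\varrho$, and using $|\partial B_r|=n\omega_n r^{n-1}$ gives
\begin{equation*}
\int_{\mathbb{R}^n\setminus B_\varrho}\frac{dy}{|y-x_0|^{n+2s}} = n\omega_n\int_\varrho^\infty r^{-1-2s}\,dr = \frac{n\omega_n}{2s}\,\varrho^{-2s}.
\end{equation*}
Taking square roots, the constant $c\equiv c(n,s)=(n\omega_n/(2s))^{1/2}$ emerges, and multiplying back yields exactly the claimed bound with the factor $\varrho^{-s}$ out front.

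There is really no obstacle here: the whole content of the lemma is the observation that the weight $|y-x_0|^{-(n+2s)}$ can be viewed as a probability-like density on $\mathbb{R}^n\setminus B_\varrho$ (after the explicit normalization above), so that $L^1$ bounds on $w$ against it are controlled by $L^2$ bounds against it, with the price being a factor $\varrho^{-s}$ coming from the total mass. The only care needed is to keep the dependence of the constant explicit on $n$ and $s$, and to verify finiteness of the geometric integral, which requires $s>0$.
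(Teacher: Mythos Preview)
Your proof is correct: the Cauchy--Schwarz splitting of the weight and the explicit computation of $\int_{\mathbb{R}^n\setminus B_\varrho}|y-x_0|^{-n-2s}\,dy=c(n,s)\varrho^{-2s}$ give exactly the stated inequality with the stated dependence of the constant. The paper itself does not prove this lemma but simply cites it from another source (De Filippis--Mingione), so there is no ``paper's own proof'' to compare against; your argument is the standard one and is precisely what lies behind the cited result.
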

The following lemma can be found in [\citealp{DC}, Lemma 3.1].
\begin{lemma}{\label{abp}} Let $p \geq 1$. For $a, b \in \mathbb{R}$ and $\varepsilon>0$, we have that
\begin{equation*}
|a|^p \leq|b|^p+C_p \varepsilon|b|^p+\left(1+C_p \varepsilon\right) \varepsilon^{1-p}|a-b|^p,
\end{equation*}
with $C_p:=(p-1) \Gamma(\max \{1, p-2\})$. Here $\Gamma$ is the standard Gamma function.
\end{lemma}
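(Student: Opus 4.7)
The plan is to implement the perturbation strategy described in the introduction: freeze the nonlocal part of \cref{prob} by comparing $u$, on a small parabolic cylinder, with the solution $v$ of the pure heat equation sharing the parabolic boundary values of $u$. Classical $C^{1,\beta}$-regularity for caloric functions (\cite{GaryM}) supplies an excess-decay estimate for $\nabla v$; an $L^2$ energy estimate for $w:=u-v$ then transfers this decay to $\nabla u$. A Campanato-type iteration concludes that $\nabla u\in C^{0,\alpha}$. Throughout, \cref{hld1} (already available) will be used to dominate oscillations of $u$ on small cylinders by $r^{\alpha_0}$ with $\alpha_0$ arbitrarily close to $1$.

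Concretely, for a point $z_0=(x_0,t_0)$ and radius $r\in(0,\tilde r]$ with $Q_r(z_0)\Subset\Omega_0\times(\tau_1,\tau_2)$, I would introduce $v\in L^2(I_r;W^{1,2}(B_r))$ solving $\partial_t v-\Delta v=0$ in $Q_r(z_0)$ with $v=u$ on the parabolic boundary. The difference $w=u-v$ lies in $L^2(I_r;W^{1,2}_0(B_r))$, so it is admissible (after the usual regularisation via \cref{steklov,steklov2}, starting from \cref{steksol}) as a test function. Subtracting the two weak formulations makes the local elliptic terms combine into $\int_{Q_r}|\nabla w|^2\,dz$ together with the standard energy piece $\sup_{t\in I_r}\int_{B_r}|w|^2\,dx$, and leaves only $\int_{I_r}\mathcal{E}(u,w,t)\,dt$ to control. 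Splitting the defining double integral of $\mathcal{E}$ into the inner part over $B_r\times B_r$ and the tail part over $(\mathbb{R}^n\setminus B_r)\times B_r$, the former is dominated using \cref{fracemb,fracpoin}, while the latter is controlled by \cref{Ming}. After replacing $u$ by $u-k$ for a suitable constant $k$, the outcome is an estimate of the form
\begin{equation*}
\sup_{t\in I_r}\fint_{B_r}|w|^2\,dx+\fint_{Q_r}|\nabla w|^2\,dz\leq c\,r^{2(1-s)}\bigl[\operatorname{osc}_{Q_r}u+\operatorname{Tail}_{\infty}(u-k;x_0,r,I_r)\bigr]^2,
\end{equation*}
the factor $r^{2(1-s)}$ being the source of the improvement coming from the subcritical order of $\mathcal{L}$.

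Since $\nabla v$ is smooth in the interior, for every $\beta\in(0,1)$ and every $\rho\in(0,r/2]$ one has
\begin{equation*}
\fint_{Q_\rho(z_0)}\bigl|\nabla v-(\nabla v)_{Q_\rho(z_0)}\bigr|^2\,dz\leq c_\beta\Bigl(\tfrac{\rho}{r}\Bigr)^{2\beta}\fint_{Q_r(z_0)}|\nabla v|^2\,dz.
\end{equation*}
Writing $\nabla u=\nabla v+\nabla w$ and combining with the energy estimate above, the excess functional $\Phi(\rho):=\fint_{Q_\rho(z_0)}|\nabla u-(\nabla u)_{Q_\rho(z_0)}|^2\,dz$ satisfies a decay inequality of the form
\begin{equation*}
\Phi(\rho)\leq c\Bigl(\tfrac{\rho}{r}\Bigr)^{2\beta}\Phi(r)+c\,r^{2(\alpha_0-1)}\|u\|_{L^\infty}^2+c\,r^{-2s}\bigl[\operatorname{Tail}_{\infty}(u-k;x_0,r,I_r)\bigr]^2,
\end{equation*}
where the middle term uses \cref{hld1} with $\alpha_0$ chosen close to $1$, and the tail is split into an inner contribution (again handled by \cref{hld1}) and a genuinely nonlocal outside contribution that is absorbed in the constant $c(\|u\|_{L^\infty(\Omega_T)},\operatorname{Tail}_{\infty}(u;x_0,t_0,\tilde r))$ appearing in the statement. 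Iterating the standard geometric lemma (cf.\ \cite{MG}) with a small parameter to reabsorb the first term then yields $\Phi(\rho)\leq c\,\rho^{2\alpha}$ for some $\alpha\equiv\alpha(s,n)\in(0,\min\{\beta,\alpha_0-1+s,1-s\})$, and Campanato's characterisation gives $\nabla u\in C^{0,\alpha}(\Omega_0\times(\tau_1,\tau_2))$ with the quantitative bound claimed.

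The main obstacle will be controlling the nonlocal tail. Unlike the local situation, $\operatorname{Tail}_{\infty}(u;x_0,r,I_r)$ does not automatically decay as $r\to 0$; extracting usable decay requires subtracting both a constant (in the inner annulus, where \cref{hld1} delivers $r^{\alpha_0}$) and accepting the far contribution only as a fixed constant. Balancing the gain $r^{\alpha_0}$ from \cref{hld1} against the singular weight $r^{-s}$ appearing in the tail term is what ultimately fixes the admissible Hölder exponent $\alpha$ and forces the smallness threshold $\tilde r\equiv\tilde r(n,s)$. A secondary technical point is the rigorous use of $w$ as test function, which is carried out through the Steklov averaging of \cref{weaksolll2} together with the convergence properties in \cref{steklov,steklov2}.
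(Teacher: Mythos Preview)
Your proposal does not address the statement at all. \Cref{abp} is an elementary pointwise inequality for real numbers $a,b\in\mathbb{R}$ and $p\geq 1$; it has nothing to do with weak solutions, parabolic cylinders, tails, or Campanato iteration. What you have written is a sketch of the proof of \cref{hld2} (gradient H\"older regularity for the PDE), not of \cref{abp}. The two statements are unrelated in content and in scale: one is a two-line real-variable inequality, the other is the main theorem of the paper.

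For the record, the paper does not prove \cref{abp} either; it simply cites \cite{DC}. A direct argument is short: for $p=1$ the inequality is just $|a|\leq|b|+|a-b|$. For $p>1$, apply the mean value theorem to $t\mapsto t^p$ between $|b|$ and $|a|\leq|b|+|a-b|$ to get $|a|^p\leq|b|^p+p(|b|+|a-b|)^{p-1}|a-b|$, then use the elementary bound $(x+y)^{p-1}\leq c_p(x^{p-1}+y^{p-1})$ together with Young's inequality $p|b|^{p-1}|a-b|\leq \varepsilon(p-1)|b|^p+\varepsilon^{1-p}|a-b|^p$ to arrive at the stated form. No PDE machinery is involved.
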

The following version of parabolic Poincar\'{e} is from [\citealp{KA}, Lemma 2.13].
\begin{lemma}{\label{Poincar\'{e}}} Let $f \in L^{2}(0, T ; W^{1, 2}(\Omega))$ and suppose that ${B}_r \Subset \Omega$ be a compactly contained ball of radius $r>0$. Let $I \subset(0, T)$ be a time interval and $\rho(x, t) \in L^1({B}_r \times I)$ be a positive function satisfying $\|\rho\|_{L^{\infty}({B}_r \times I)} \lesssim_n \frac{\|\rho\|_{L^1({B}_r \times I)}}{\left|{B}_r \times I\right|}$ and $\mu(x) \in C_c^{\infty}({B}_r)$ such that $\int_{{B}_r} \mu(x) d x=1$ with $|\mu| \leq \frac{C(n)}{r^n}$ and $|\nabla \mu| \leq \frac{C(n)}{r^{n+1}}$. Then there holds
\begin{equation*}
\begin{array}{l}
 \qquad \fint\fint_{{B}_r \times I}\left|\frac{f-(f)_\rho}{r}\right|^{2} d z \quad \lesssim_{n} \fint\fint_{{B}_r \times I}|\nabla f|^{2} d z+\sup _{t_1, t_2 \in I}\left|\frac{(f)_\mu(t_2)-(f)_\mu(t_1)}{r}\right|^{2},\\
\text{ where } 
    (f)_\rho:=\iint_{{B}_r \times I} f(z) \frac{\rho(z)}{\|\rho\|_{L^1({B}_r \times I)}} d z \text { and }(f)_\mu(t_i):=\int_{{B}_r} f(x, t_i) \mu(x) d x \text { for } i=1,2 .
    \end{array}
\end{equation*}
\end{lemma}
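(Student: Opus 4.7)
\textbf{Proof proposal for \cref{Poincar\'{e}}.} The strategy is a two-step splitting: first compare $f$ to the purely spatial weighted average $(f)_\mu(t)$ pointwise in time, then compare $(f)_\mu(t)$ to the space-time weighted average $(f)_\rho$. Concretely, write $f(x,t)-(f)_\rho = \bigl(f(x,t)-(f)_\mu(t)\bigr) + \bigl((f)_\mu(t)-(f)_\rho\bigr)$ and square. The first piece is a frozen-time problem that reduces to the classical spatial Poincar\'e inequality; the second piece is a constant in $x$ whose $L^2(I)$ norm must be controlled by the oscillation of $t\mapsto (f)_\mu(t)$ plus a residual containing $|\nabla f|^2$. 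The normalization hypotheses $\int \mu =1$, $|\mu|\le C r^{-n}$, and $\|\rho\|_{L^\infty}\lesssim \|\rho\|_{L^1}/|B_r\times I|$ serve precisely to keep all weighted averages comparable to standard unweighted averages.

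For the first piece I would add and subtract the standard spatial mean $(f)_{B_r}(t)$ to obtain $|f-(f)_\mu(t)|^2\lesssim |f-(f)_{B_r}(t)|^2 + |(f)_{B_r}(t)-(f)_\mu(t)|^2$. The former is handled by the classical Poincar\'e inequality (\cref{p}) to give $\fint_{B_r}|f-(f)_{B_r}(t)|^2\,dx \lesssim r^2 \fint_{B_r}|\nabla f|^2\,dx$. For the latter, using $\int_{B_r}(|B_r|^{-1}-\mu)\,dx = 0$, I rewrite $(f)_{B_r}(t)-(f)_\mu(t)=\int_{B_r}(f-(f)_{B_r}(t))(|B_r|^{-1}-\mu)\,dx$ and estimate by $\|\mu\|_\infty + |B_r|^{-1} \lesssim r^{-n}$ times $\|f-(f)_{B_r}(t)\|_{L^1(B_r)}$, which after Cauchy--Schwarz and Poincar\'e is again $\lesssim r^2\fint_{B_r}|\nabla f|^2\,dx$. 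Integrating in $t$ over $I$ yields $\fint\fint_{B_r\times I}|f-(f)_\mu(t)|^2\,dz \lesssim r^2 \fint\fint_{B_r\times I}|\nabla f|^2\,dz$.

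For the second piece, since $\int\int \rho/\|\rho\|_{L^1} = 1$ and $(f)_\mu(t)$ is independent of $(y,s)$, I have
$$(f)_\mu(t)-(f)_\rho = \iint_{B_r\times I}\bigl[(f)_\mu(t)-(f)_\mu(s)\bigr]\frac{\rho(y,s)}{\|\rho\|_{L^1}}\,dy\,ds + \iint_{B_r\times I}\bigl[(f)_\mu(s)-f(y,s)\bigr]\frac{\rho(y,s)}{\|\rho\|_{L^1}}\,dy\,ds.$$
The first summand is bounded in absolute value by $\sup_{t_1,t_2\in I}|(f)_\mu(t_1)-(f)_\mu(t_2)|$. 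For the second, the hypothesis $\|\rho\|_{L^\infty}/\|\rho\|_{L^1}\lesssim 1/|B_r\times I|$ lets me replace $\rho/\|\rho\|_{L^1}$ by an $O(|B_r\times I|^{-1})$ pointwise bound, so the integral is controlled by $\fint\fint_{B_r\times I}|f-(f)_\mu(s)|\,dy\,ds$. Cauchy--Schwarz and the spatial estimate from the first piece upgrade this to $\lesssim r\bigl(\fint\fint_{B_r\times I}|\nabla f|^2\,dz\bigr)^{1/2}$.

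Squaring, integrating in $t$ (which adds nothing since the integrand is $x$- and $t$-independent after the estimate), and combining with the first piece yields $\fint\fint_{B_r\times I}|f-(f)_\rho|^2\,dz \lesssim r^2\fint\fint_{B_r\times I}|\nabla f|^2\,dz + \sup_{t_1,t_2\in I}|(f)_\mu(t_1)-(f)_\mu(t_2)|^2$; dividing by $r^2$ gives the claim. I do not expect a substantial obstacle here: the only subtle point is the bookkeeping to ensure the weighted averages $(f)_\mu$ and $(f)_\rho$ truly differ from the corresponding unweighted averages only by terms of the same order as the right-hand side, which is precisely what the hypotheses on $\mu$ and $\rho$ encode.
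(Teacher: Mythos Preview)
Your argument is correct and is the standard splitting-in-time proof of this parabolic Poincar\'e inequality. The paper itself does not supply a proof of this lemma: it simply quotes the statement from \cite{KA} (their Lemma~2.13) and uses it as a black box, so there is no in-paper proof to compare against. Your decomposition $f-(f)_\rho = (f-(f)_\mu(t)) + ((f)_\mu(t)-(f)_\rho)$, the reduction of the first piece to the spatial Poincar\'e inequality via the bound $|\mu|\lesssim r^{-n}$, and the handling of the second piece via the hypothesis $\|\rho\|_{L^\infty}\lesssim \|\rho\|_{L^1}/|B_r\times I|$ together with the oscillation bound $\sup_{t_1,t_2}|(f)_\mu(t_1)-(f)_\mu(t_2)|$ are exactly how such results are proved in the cited literature; the bound on $|\nabla\mu|$ is not actually needed for the lemma as stated (it is used later in the paper in \cref{w_t}).
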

\section{Fundamental Estimates}
\subsection{Caccioppoli estimate}
\begin{theorem}\label{energy_estimate}
	Let $u$ be a local solution to our problem. Let $x_0 \in \Omega, r>0, B_r \equiv B_r(x_0)$ satisfies $\bar{B}_r \subseteq \Omega$ and $0<\tau_1<\tau_2,$ satisfies $\left[\tau_1, \tau_2\right] \subseteq(0, T)$. Then for all functions $\psi \in C_0^{\infty}(B_r)$ and $\eta \in C^{\infty}(\mathbb{R})$ such that $\eta(t) \equiv 0$ if $t \leq \tau_1$, 
  there exists a constant $C>0$ only depending on $s$ and $n$ such that
	\begin{equation*}
	\begin{array}{l}
		\quad \int_{\tau_1}^{\tau_2} \int_{B_r} \int_{B_r}\left|v(x, t) \psi(x)-v(y, t) \psi(y)\right|^2 \eta^2(t) d \mu d t+\underset{\tau_1<t<\tau_2}{\operatorname{ess} \sup} \int_{B_r} v^2(x, t) \psi^2(x) \eta^2(t) d x \smallskip\\\quad+\int_{\tau_1}^{\tau_2}\int_{B_r} \left|\nabla u(x,t) \right|^2\psi ^2(x) \eta^2(t)dxdt\smallskip\\
		\leq C \int_{\tau_1}^{\tau_2} \int_{B_r} \int_{B_r} \operatorname{max} \left\{v(x, t), v(y, t)\right\}^2|\psi(x)-\psi(y)|^2 \eta^2(t) d \mu d t \smallskip \\\quad+C \int_{\tau_1}^{\tau_2}\int_{B_r}\left(\int_{\mathbb{R}^n \backslash B_r} \frac{|v(y, t)|}{|x-y|^{n+2s}} d y \right)|v(x, t)| \psi^2(x) \eta^2(t) d x d t \smallskip\\\quad+C \int_{\tau_1}^{\tau_2} \int_{B_r} v^2(x, t) \psi^2(x) \eta(t)\left|\partial_t \eta(t)\right| d x d t+C\int_{\tau_1}^{\tau_2}\int_{B_r}v^2(x,t)\left|\nabla \psi (x) \right|^2 \eta^2(t)d x d t,
	\end{array}
	\end{equation*}
	where $d \mu:= K(x,y,t)dxdy$ and $v:=
u-k$ with a level $k \in \mathbb{R}$. 
\end{theorem}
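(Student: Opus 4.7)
The plan is to test the Steklov-averaged weak formulation \cref{steksol} against $\varphi = (u-k)_h\,\psi^2(x)\,\eta^2(t)$, which is admissible because $\psi \in C_0^\infty(B_r)$ with $B_r \Subset \Omega$, and then pass to the limit $h \to 0$ using \cref{steklov,steklov2}. Setting $v := u-k$, we have $\nabla u = \nabla v$, $\partial_t u = \partial_t v$, and $u(x,t)-u(y,t) = v(x,t)-v(y,t)$, so every term can be rewritten in terms of $v$. The resulting identity splits naturally into a parabolic piece, a local gradient piece, and a nonlocal bilinear piece, each of which I will estimate in turn.

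For the time derivative, $u_{h,t}\,v_h = \tfrac{1}{2}\partial_t(v_h^2)$; integrating over $t \in (\tau_1, \tilde{\tau})$ for arbitrary $\tilde{\tau} \in (\tau_1,\tau_2)$ with $\eta(\tau_1)=0$ yields $\tfrac{1}{2}\int_{B_r} v_h^2(\cdot,\tilde{\tau})\psi^2\eta^2(\tilde{\tau})\,dx - \int_{\tau_1}^{\tilde{\tau}}\!\!\int_{B_r} v_h^2\psi^2\eta\,\partial_t\eta\,dx\,dt$. Letting $h \to 0$ and then supping over $\tilde{\tau}$ (chosen at Lebesgue points of $t \mapsto \int v^2\psi^2\,dx$) delivers the essential supremum on the LHS and the $\eta\,\partial_t\eta$ contribution on the RHS. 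For the gradient term, expansion gives $\iint \nabla v \cdot \nabla(v\psi^2\eta^2) = \iint |\nabla v|^2\psi^2\eta^2 + 2\iint v\psi\eta^2 \nabla v\cdot\nabla\psi$, and Young's inequality with a small parameter absorbs the cross term by $\tfrac{1}{2}|\nabla v|^2\psi^2\eta^2 + C v^2|\nabla\psi|^2\eta^2$, producing the gradient term on the LHS and the $|\nabla\psi|^2$ term on the RHS.

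The nonlocal bilinear form is handled via the elementary identity
\[ (a-b)(a\psi_1^2 - b\psi_2^2) \;=\; (a\psi_1 - b\psi_2)^2 \;-\; ab(\psi_1 - \psi_2)^2, \]
applied with $a = v(x,t)$, $b = v(y,t)$, $\psi_i = \psi(x_i)$; combined with $|ab| \le \max\{|a|,|b|\}^2$, this produces the good squared-increment term on the LHS and the $\max\{v(x),v(y)\}^2|\psi(x)-\psi(y)|^2$ remainder on the RHS over the region $B_r \times B_r$. Since $\psi \equiv 0$ outside $B_r$, the symmetrized integration over $B_r \times (\mathbb{R}^n \setminus B_r)$ reduces to $(v(x)-v(y))\,v(x)\psi^2(x)K(x,y,t)$; the positive piece $v^2(x)\psi^2(x)K$ is nonnegative and may be discarded, while the mixed piece $v(x)v(y)\psi^2(x)K$ is estimated in absolute value by the stated tail integrand $|v(x)|\psi^2(x)\,|v(y)|\,|x-y|^{-n-2s}$.

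The main obstacle is the rigorous passage $h \to 0$ in the nonlocal bilinear form. I would handle this by splitting the double integral into a near-diagonal piece over $B_r \times B_r$, where strong convergence of the Steklov averages in $L^2(I; W^{s,2}_{\textit{loc}}(\Omega))$ (the analogue of \cref{steklov2} for fractional Sobolev spaces, as noted in the text) allows passage to the limit, and a far-field piece over $B_r \times (\mathbb{R}^n \setminus B_r)$, where dominated convergence applies using the finiteness of the tail granted by $u \in L^\infty_{\textit{loc}}(0,T; L^1_{2s}(\mathbb{R}^n))$. Collecting the three estimates and absorbing the $\tfrac{1}{2}|\nabla v|^2\psi^2\eta^2$ contribution on the left then yields the stated inequality.
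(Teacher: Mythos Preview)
Your proposal is correct and follows the same overall architecture as the paper: test the Steklov-averaged formulation \cref{steksol} with $\varphi=v_h\psi^2\eta^2$, split into time/gradient/near-nonlocal/far-nonlocal pieces, pass $h\to 0$, estimate each piece, and finally take the essential supremum over the upper time endpoint. Your treatment of the time derivative, the gradient term via Young's inequality, the far-field piece via $(v(x)-v(y))v(x)\ge -|v(x)||v(y)|$, and the limit $h\to0$ all match the paper's argument essentially verbatim.

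The one genuine difference is in the near-diagonal nonlocal piece on $B_r\times B_r$. The paper argues by assuming without loss of generality $\psi(x)\ge\psi(y)$, writing $\mathcal T=\mathcal T_1+\mathcal T_2$, and absorbing the cross term $\mathcal T_2$ via Young's inequality with $\varepsilon=1/4$; this produces the lower bound with constants $1/4$ and $C$. Your route via the exact identity
\[
(a-b)(a\psi_1^2-b\psi_2^2)=(a\psi_1-b\psi_2)^2-ab(\psi_1-\psi_2)^2
\]
together with $|ab|\le\max\{|a|,|b|\}^2$ is more direct and yields the same inequality (indeed with sharper constants $1/2$ and $1/2$) in a single line, with no case distinction or absorption step. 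Both approaches are standard; yours is the cleaner variant.
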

\begin{proof} Take
$
0<h<T-\tau_2
$ and $I=[\tau_1,\tau_2].$ We choose arbitrary $\varphi \in L^2(I ; W^{1, 2}(\Omega)) %\cap L^2(I ; W^{s, 2}(\Omega)) 
\cap W^{1,2}(I ; L^2(\Omega))$ such that $\varphi$ has spatial support compactly contained in $B_r$. So from \cref{steksol}, integrating over $(t_1,t_2]\subset(\tau_1,\tau_2]$ we obtain
\begin{equation*}
 \int_{t_1} ^{t_2}\int_{B_r} u_{h,t}\varphi d xdt+\int_{t_1}^{t_2} \int_{B_r} (\nabla u)_h\cdot\nabla \varphi dxdt+\int_{t_1}^{t_2} \mathcal{E}(u_h, \varphi, t) d t  =0.
\end{equation*}
Let us fix here $t_1=\tau_1$ and take $t_2 \in\left(\tau_1, \tau_2\right]$ which will be specified later.\\
Now as $v(x, t):=(u-k)(x, t)$, we choose $\varphi(x, t)=v_h(x, t) \psi^2(x) \eta^2(t)$ in above to get
\begin{equation}{\label{ca1}}
    \begin{array}{c}
         \underbrace{\int_{t_1}^{t_2} \int_{B_r} \partial_t u_h(x, t)(v_h \psi^2 \eta^2)(x, t) d x d t}_{I_1^h}\\+\underbrace{\frac{1}{2} \int_{t_1}^{t_2} \int_{B_r} \int_{B_r} (v(x, t)-v(y, t))_h ((v_h \psi^2 \eta^2)(x, t)-(v_h \psi^2 \eta^2)(y, t)) d \mu d t}_{I_2^h} \\
 +\underbrace{\int_{t_1}^{t_2} \int_{\mathbb{R}^n \backslash B_r} \int_{B_r} (v(x, t)-v(y, t))_h \times(v_h\psi^2 \eta^2)(x, t) d \mu d t}_{I_3^h}
+\underbrace{\int_{t_1}^{t_2} \int_{B_r} (\nabla u)_h\cdot\nabla (v_h\psi^2\eta^2) dxdt}_{I_4^h}=0.
    \end{array}
\end{equation}
Now we need to take $h \rightarrow 0$ and find the limits of $I_1^h, I_2^h, I_3^h, I_4^h$ before we proceed for our required estimate.
\begin{description}[leftmargin=0cm]
\item{\textbf{Estimate for $I_1^h$:}} Using integrating by parts, we obtain that
\begin{equation*}
\begin{array}{rcl}
I_1^h  &=&\int_{t_1}^{t_2} \int_{B_r} \partial_t(v_h(x, t)) v_h(x, t) \psi^2(x) \eta^2(t) d x d t\smallskip \\
& =&\frac{1}{2} \int_{t_1}^{t_2} \int_{B_r} \partial_t(v_h(x, t))^2 \psi^2(x) \eta^2(t) d x d t\smallskip\\
& =& \frac{1}{2} \int_{B_r}\left(v_h(x, t_2)\right)^2 \psi^2(x) \eta^2(t_2) d x  -\frac{1}{2} \int_{B_r}\left(v_h(x, t_1)\right)^2 \psi^2(x) \eta^2(t_1) d x\smallskip\\&& -\int_{t_1}^{t_2} \int_{B_r}\left(v_h(x, t)\right)^2 \psi^2(x) \eta(t) \partial_t \eta(t) d x d t .
\end{array}
\end{equation*}
Since our weak solution $u \in C^0(I; L_\textit{loc}^2(\Omega))$, $\psi$ is bounded, and $\eta$ is also bounded over $I$, therefore using \cref{steklov},
we get the following convergence
\begin{equation}{\label{I1}}
\begin{array}{c}
    I_1^h \longrightarrow  \frac{1}{2} \int_{B_r} v^2(x, t_2) \psi^2(x) \eta^2(t_2)d x-\frac{1}{2} \int_{B_r} v^2(x, t_1) \psi^2(x) \eta^2(t_1) d x \\
 -\int_{t_1}^{t_2} \int_{B_r} \eta(t) \partial_t \eta(t) \psi^2(x) v^2(x, t) d x d t =:I_1\quad\text { as } h \rightarrow 0.
\end{array}
\end{equation}
%Let us denote the limit of $I_1^{h}$ by $I_1$. 
\item{\textbf{Estimate for $I_2^h$:}} Using the form of $K(x,y,t)$, we get
\begin{equation*}
I_2^h= \frac{1}{2} \int_{t_1}^{t_2} \int_{B_r} \int_{B_r} (v(x, t)-v(y, t))_h K^{\frac{1}{2}}(x,y,t) \times((v_h \psi^2\eta^2)(x, t)-(v_h \psi^2 \eta^2)(y, t)) K^{\frac{1}{2}}(x,y,t) \,dx dy d t.
\end{equation*}
Using \cref{steklov2} we see that the norm of $\left\{v_h \right\}_{h \in\left(0, T-\tau_2\right)}$ in $L^2(t_1, t_2 ; W^{s, 2}(B_r))$ is bounded by $||v||_{L^2(t_1, t_2 ; W^{s, 2}(B_r))}$, which is independent of $h$; also since $\psi$ and $\eta$ are bounded, we will get
\begin{equation*}
\begin{array}{rcl}
&\left\|\frac{(v_h \psi^2 \eta^2)(x, t)}{|x-y|^{\frac{n}{2}+s}}\right\|_{L^2\left(t_1, t_2 ; L^2\left(B_r \times B_r\right)\right)}& \leq C 
\smallskip\\
\implies&
\left\|K^{\frac{1}{2}}(x, y, t)(v_h \psi^2 \eta^2)(x, t)\right\|_{L^2\left(t_1, t_2 ; L^2\left(B_r \times B_r\right)\right)} &\leq C .
\end{array}
\end{equation*}
Now as $(v_h \psi^2 \eta^2) \rightarrow v \psi ^2\eta^2$ a.e. in $B_r \times(t_1, t_2)$ we derive that as $h \rightarrow 0$,
\begin{equation*}
    K^{\frac{1}{2}}(x, y, t)(v_h \psi^2 \eta^2)(x, t)  \rightharpoonup K^{\frac{1}{2}}(x, y, t) v(x, t) \psi^2(x) \eta^2(t) \text { in } L^2(t_1, t_2 ; L^2(B_r \times B_r)) .
\end{equation*}
By utilizing form of $K(x,y,t)$, we can check
\begin{equation*}
(v(x, t)-v(y, t))_h K^{\frac{1}{2}}(x, y, t) \in L^{2}(t_1, t_2 ; L^{2}(B_r \times B_r)) .
\end{equation*}
Thus, combining the above three, we finally get the convergence that
$
I_2^h \rightarrow I_2  \text { as } h \rightarrow 0 \text {,}
$
where 
\begin{equation}{\label{I2}}
    I_2=\frac{1}{2} \int_{t_1}^{t_2} \int_{B_r} \int_{B_r} (v(x, t)-v(y, t)) \times(v(x, t) \psi^2(x)-v(y, t) \psi^2(y)) \eta^2(t) d \mu d t .
\end{equation}
\item{\textbf{Estimate for $I_3^h$:}} For the term $I_3^h$, using similar arguments as of the case $I_2^h$, we can derive that
\begin{equation}{\label{I3}}
    I_3^h \rightarrow \int_{t_1}^{t_2} \int_{\mathbb{R}^n \backslash B_r} \int_{B_r} (v(x, t)-v(y, t)) \times(v \psi^2 \eta^2)(x, t) d \mu d t=: I_3 \quad \text { as } h \rightarrow 0 .
\end{equation}
The explicit reasoning for proving the above can be found in [\citealp{BL}, Appendix B].
\item{\textbf{Estimate for $I_4^h$:}} Since the Steklov averages have nothing to do with the variable $x$, we get the following convergence
\begin{equation}{\label{I4}}
I_4^h \rightarrow \int_{t_1}^{t_2} \int_{B_r} \nabla u\cdot\nabla (v\psi^2\eta^2) dxdt=: I_{4} \quad \text  { as } h \rightarrow 0.
\end{equation}
\end{description}
Finally, by virtue of \cref{I1,I2,I3,I4} we get from \cref{ca1} that
\begin{equation}{\label{IIII}}
I_1+I_2+I_3 + I_4=0 .    
\end{equation}
Now, in order to get the desired estimate, we start estimating $I_1, I_2, I_3$ and $I_4$.
\begin{description}[leftmargin=0cm]
\item{\textbf{The estimate for $I_1$:}} Since we assumed that $\eta(t_1)=0$, we directly have from \cref{I1} that
\begin{equation}{\label{I11}}
    I_1=\frac{1}{2} \int_{B_r} v^2(x, t_2) \psi^2(x) \eta^2(t_2) d x-\int_{t_1}^{t_2} \int_{B_r} \eta(t) \partial_t \eta(t) \psi^2(x) v^2(x, t) d x d t.
\end{equation}
\item{\textbf{The estimate for $I_2$:}} 
For $I_2$, let us set
\begin{equation*}
    \mathcal{T}(x,y)=(v(x,t)-v(y,t))(v(x,t)\psi^2(x)-v(y,t)\psi^2(y))\eta^2(t).
\end{equation*}
We first assume that $\psi(x)\geq\psi(y)$ and rewrite $\mathcal{T}(x,y)=\mathcal{T}_1(x,y)+\mathcal{T}_2(x,y)$, where 
\begin{equation*}
\begin{array}{c}
    \mathcal{T}_1(x,y)=(v(x,t)-v(y,t))^2\psi^2(x)\eta^2(t) \end{array}\end{equation*} 
    and \begin{equation*}
\begin{array}{c}
\qquad\mathcal{T}_2(x,y)=(v(x,t)-v(y,t))(\psi^2(x)-\psi^2(y))v(y,t)\eta^2(t).
    \end{array}
\end{equation*}
Now we get using the fact that $\psi(x)\geq\psi(y)$, we get
\begin{equation*}
    |\mathcal{T}_2(x,y)|\leq 2|\psi(x)||\psi(x)-\psi(y)||v(x,t)-v(y,t)||v(y,t)|\eta^2(t)
\end{equation*}
and by Young's inequality, we obtain
\begin{equation*}
\begin{array}{rcl}
     \mathcal{T}_1(x,y)&=&\mathcal{T}(x,y)-\mathcal{T}_2(x,y)\\
     &\leq &\mathcal{T}(x,y)+|\mathcal{T}_2(x,y)| \\&\leq& \mathcal{T}(x,y)+2|\psi(x)||\psi(x)-\psi(y)||v(x,t)-v(y,t)||v(y,t)|\eta^2(t)\\&\leq& \mathcal{T}(x,y)+2\varepsilon\psi^2(x)|v(x,t)-v(y,t)|^2\eta^2(t)+2C(\varepsilon)|\psi(x)-\psi(y)|^2|v(y,t)|^2\eta^2(t)
    \\& =&\mathcal{T}(x,y)+2\varepsilon\mathcal{T}_1(x,y)+2C(\varepsilon)|\psi(x)-\psi(y)|^2|v(y,t)|^2\eta^2(t).
\end{array}
\end{equation*}
Now choosing $\varepsilon=1/4$ we get 
\begin{equation*}
\begin{array}{rcl}
  |v(x,t)-v(y,t)|^2\psi^2(y)\eta^2(t)&\leq&  |v(x,t)-v(y,t)|^2\psi^2(x)\eta^2(t)\\&\leq& 2 \mathcal{T}(x,y) +C|\psi(x)-\psi(y)|^2|v(y,t)|^2\eta^2(t) ,
  \end{array}
\end{equation*}
with $C$ being a constant. Now, if $\psi(y)>\psi(x)$, we note that $\mathcal{T}(x,y)=\mathcal{T}(y,x)$ and interchanging the role of $x$ and $y$ in the above estimates, in any case we conclude that 
\begin{equation*}
    |v(x,t)\psi(x)-v(y,t)\psi(y)|^2\eta^2(t)\leq 2 \mathcal{T}(x,y) +C\operatorname{max}\{|v(x,t)|,|v(y,t)|\}^2|\psi(x)-\psi(y)|^2\eta^2(t) .
\end{equation*}
Finally, using \cref{I2}, we get
\begin{equation}{\label{I22}}
\begin{array}{rcl}
    I_2&=&\frac{1}{2} \int_{t_1}^{t_2} \int_{B_r} \int_{B_r} (v(x, t)-v(y, t)) \times(v(x, t) \psi^2(x)-v(y, t) \psi^2(y)) \eta^2(t) d \mu d t\smallskip \\
 &\geq& \frac{1}{4} \int_{t_1}^{t_2} \int_{B_r} \int_{B_r}\left|v(x, t) \psi(x)-v(y, t) \psi(y)\right|^2 \eta^2(t) d \mu d t \smallskip\\
 &&-C \int_{t_1}^{t_2} \int_{B_r} \int_{B_r}\left(\max \left\{v(x, t), v(y, t)\right\}\right)^2|\psi(x)-\psi(y)|^2 \eta^2(t) d \mu d t .  
\end{array}
\end{equation}
\item{\textbf{The estimate for $I_3$:}} Since
$
(v(x, t)-v(y, t)) v(x, t) \geq-v(y, t)v(x, t) \geq-|v(y,t)||v(x,t)|,
$
it follows from \cref{I3} that
\begin{equation}{\label{I33}}
    \begin{array}{rcl}
         I_3 & =&\int_{t_1}^{t_2} \int_{\mathbb{R}^n \backslash B_r} \int_{B_r}(v(x, t)-v(y, t)) v(x, t) \psi^2(x) \eta^2(t) d \mu d t \\&\geq&-C \int_{t_1}^{t_2} \int_{B_r}\left(\int_{\mathbb{R}^n \backslash B_r} \frac{|v(y, t)|}{|x-y|^{n+2s}} d y\right) |v(x, t)| \psi^2(x) \eta^2(t) d x d t .
    \end{array}
\end{equation}
\item{\textbf{The estimate for $I_4$:}} Using integration by parts in \cref{I4}, we get
\begin{equation*}
\begin{array}{rcl}
   I_4&=&\int_{t_1}^{t_2} \int_{B_r} \nabla v\cdot\nabla (v\psi^2\eta^2) dxdt
   = \int_{t_1}^{t_2} \int_{B_r} \left|\nabla v\right|^2\psi^2\eta^2 dxdt+2\int_{t_1}^{t_2} \int_{B_r}v \nabla v \cdot \psi \nabla{\psi} \eta^2 dx dt.
   \end{array}
\end{equation*}
Using Young's inequality with $\varepsilon$, we get by \cref{IIII},
\begin{equation}{\label{I44}}
    \begin{array}{rcl}
           I_1+I_2+I_3+ \int_{t_1}^{t_2} \int_{B_r} \left|\nabla v\right|^2\psi^2\eta^2 dxdt  &=& -2\int_{t_1}^{t_2} \int_{B_r}v \nabla v \cdot \psi \nabla{\psi} \eta^2 dx dt\\& \leq& 2\int_{t_1}^{t_2} \int_{B_r}\left| v \nabla{\psi} \right | \left|\psi \nabla v  \right| \eta^2 dx dt\smallskip\\&
    \leq& \varepsilon \int_{t_1}^{t_2} \int_{B_r} \left|\nabla v\right|^2\psi^2\eta^2 dxdt + C(\varepsilon ) \int_{t_1}^{t_2} \int_{B_r} \left|\nabla \psi \right|^2v^2\eta^2 dxdt .
    \end{array}
\end{equation}   
\end{description}
Now choosing $\varepsilon=1/2$ and using \cref{I11,I22,I33,I44}, we get from \cref{IIII},
\begin{equation*}
\begin{array}{l}
\int_{B_r} v^2(x, t_2) \psi^2(x) \eta^2(t_2) d x+\int_{\tau_1}^{t_2} \int_{B_r} \int_{B_r}\left|v(x, t)\psi(x)-v(y, t)\psi(y)\right|^2 \eta^2(t) d \mu d t\\\qquad\qquad\qquad\qquad\qquad+ \int_{\tau_1}^{t_2} \int_{B_r} \left|\nabla v\right|^2\psi^2\eta^2 dxdt\\
 \leq C \int_{\tau_1}^{\tau_2} \int_{B_r} \int_{B_r}\left(\max \left\{v(x, t), v(y, t)\right\}\right)^2|\psi(x)-\psi(y)|^2 \eta^2(t) d \mu d t\\\qquad\qquad\qquad+C \int_{\tau_1}^{\tau_2} \int_{B_r} v^2(x, t) \psi^2(x) \eta(t)\left|\partial_t \eta(t)\right| d x d t  \\
+C \int_{\tau_1}^{\tau_2} \int_{B_r}\left(\int_{\mathbb{R}^n \backslash B_r} \frac{|v(y, t)|}{|x-y|^{n+2s}} d y \right) |v(x, t)| \psi^2(x) \eta^2(t) d x d t+C\int_{\tau_1}^{\tau_2} \int_{B_r} v^2  \left|\nabla{\psi}\right|^2 \eta^2 dx dt.
\end{array}
\end{equation*}
In the above, since the right hand side is positive and independent of $t_2$, therefore we can vary $t_2\in(\tau_1,\tau_2]$ in the left (this side is also positive) and take essential supremum over $t_2\in(\tau_1,\tau_2]$ to get the desired estimate
\begin{equation}{\label{Caccioppoli}}
	\begin{array}{l}
 \int_{\tau_1}^{\tau_2} \int_{B_r} \int_{B_r}\left|v(x, t) \psi(x)-v(y, t) \psi(y)\right|^2 \eta^2(t) d \mu d t+\underset{\tau_1<t<\tau_2}{\operatorname{ess} \sup} \int_{B_r} v^2(x, t) \psi^2(x) \eta^2(t) d x \\
\qquad \qquad \qquad \qquad +\int_{\tau_1}^{\tau_2}\int_{B_r} \left|\nabla u(x,t) \right|^2\psi ^2(x) \eta^2(t)dxdt\\
 \leq C \int_{\tau_1}^{\tau_2} \int_{B_r} \int_{B_r} \max \left\{v(x, t), v(y, t)\right\}^2|\psi(x)-\psi(y)|^2 \eta^2(t) d \mu d t \\
 \hspace*{1.3cm}\quad +C \int_{\tau_1}^{\tau_2}   \int_{B_r}\left(\int_{\mathbb{R}^n \backslash B_r} \frac{|v(y, t)|}{|x-y|^{n+2s}} d y \right) |v(x, t)| \psi^2(x) \eta^2(t) d x d t \\
  \quad +C \int_{\tau_1}^{\tau_2} \int_{B_r} v^2(x, t) \psi^2(x) \eta(t)\left|\partial_t \eta(t)\right| d x d t+C\int_{\tau_1}^{\tau_2}\int_{B_r}v^2(x,t)\left|\nabla \psi (x) \right|^2 \eta^2(t) d x d t,
\end{array}
\end{equation}
	where $d \mu:= K(x,y,t)dxdy$ and $v:=u-k$ with a level $k \in \mathbb{R}$.
\end{proof}
\subsection{Improving the Caccioppoli estimate}In \cref{Caccioppoli}, taking average integral we get
\begin{equation}
	\begin{array}{l}
 \fint_{\tau_1}^{\tau_2} \int_{B_r} \fint_{B_r}\left|v(x, t) \psi(x)-v(y, t) \psi(y)\right|^2 \eta^2(t) d \mu d t+\frac{1}{(\tau_2-\tau_1)}\underset{\tau_1<t<\tau_2}{\operatorname{ess} \sup} \fint_{B_r} v^2(x, t) \psi^2(x) \eta^2(t) d x \\
\qquad \qquad \qquad \qquad +\fint_{\tau_1}^{\tau_2}\fint_{B_r} \left|\nabla u(x,t) \right|^2\psi ^2(x) \eta^2(t)dxdt\\
 \leq C \fint_{\tau_1}^{\tau_2} \int_{B_r} \fint_{B_r} \max \left\{v(x, t), v(y, t)\right\}^2|\psi(x)-\psi(y)|^2 \eta^2(t) d \mu d t \\
 \hspace*{1.3cm}\quad +C \fint_{\tau_1}^{\tau_2}  \fint_{B_r} \left(\int_{\mathbb{R}^n \backslash B_r} \frac{|v(y, t)|}{|x-y|^{n+2s}} d y \right) |v(x, t)| \psi^2(x) \eta^2(t) d x d t \\
  \quad +C \fint_{\tau_1}^{\tau_2} \fint_{B_r} v^2(x, t) \psi^2(x) \eta(t)\left|\partial_t \eta(t)\right| d x d t+C\fint_{\tau_1}^{\tau_2}\fint_{B_r}v^2(x,t)\left|\nabla \psi (x) \right|^2 \eta^2(t)dxdt.
\end{array}
\end{equation}
Take $(\tau_1,\tau_2)=(t_0-r^2,t_0)\Subset (0,T)$. Now since choices of $\eta$ and $\psi$ are arbitrary, we choose $\eta$ such that $\eta (t_0-r^2)=0$, $\eta\equiv 1$ in $(t_0,t_0-r^2/4)$, $\eta\leq 1$ in $(t_0-r^2,t_0)$ and $\left |\frac{\partial \eta}{\partial t}\right|\leq \frac{C}{r^2}$; and $\psi$ such that $\psi\equiv 1$ inside $B_{r/2}$, $\psi \equiv 0$ outside $B_{3r/4}$, $\psi \leq 1$ in $B_{r}$ and $\left|\nabla \psi\right|\leq \frac{C}{r}$. We get 
\begin{equation}{\nonumber}
     \begin{array}{lcr}
           \fint_{t_0-r^2/4}^{t_0}\int_{B_{r/2}} \fint_{B_{r/2}}\left|u(x,t)-u(y,t)\right|^2 d \mu dt+	\fint_{t_0-r^2/4}^{t_0}\fint_{B_{r/2}} \left|\nabla u(x,t) \right|^2 dx dt+\frac{1}{r^2}\underset{t_0-r^2<t<t_0}{\operatorname{ess} \sup} \fint_{B_r} v^2(x, t) \psi^2(x) d x &&\\ \leq C \fint_{t_0-r^2}^{t_0} \int_{B_r} \fint_{B_r} \max \left\{v(x,t), v(y,t)\right\}^2|\psi(x)-\psi(y)|^2 d \mu dt&&\\
		\quad +C\fint_{t_0-r^2}^{t_0}\fint_{B_r}\left(\int_{\mathbb{R}^n \backslash B_r} \frac{|v(y,t)|}{|x-y|^{n+2s}} d y  \right) |v(x,t)|\psi^2(x)  dxdt&&\\
		\quad+\frac{C}{r^2}\fint_{t_0-r^2}^{t_0}\fint_{B_r}v^2(x,t)\psi^2(x)\eta(t) dxdt+\frac{C}{r^2}\fint_{t_0-r^2}^{t_0}\fint_{B_r}v^2(x,t)\eta^2(t) dxdt.
     \end{array}
\end{equation}
We write the parabolic cylinder $Q_r=B_r\times(t_0-r^2,t_0)$ for convenience. Now we choose $k=(u)_{B_r\times(t_0-r^2,t_0)}=(u)_{Q_r}$ in $v=u-k$, and note that $\psi \leq 1$ in $B_r$ and $\eta\leq 1$ in $(t_0-r^2,t_0)$, to get
\begin{equation}{\label{cacc1}}
     \begin{array}{lcr}
          \quad \fint_{t_0-r^2/4}^{t_0}\int_{B_{r/2}} \fint_{B_{r/2}}\frac{\left|u(x,t)-u(y,t)\right|^2}{\left|x-y\right|^{n+2s}} d xdydt +	\fint_{t_0-r^2/4}^{t_0}\fint_{B_{r/2}} \left|\nabla u(x,t) \right|^2 dxdt&&\\\quad+\frac{1}{r^2}\underset{t_0-r^2<t<t_0}{\operatorname{ess} \sup} \fint_{B_r} \left|u(x,t)-(u)_{Q_r}\right|^2 \psi^2(x)  d x&&\\ \leq C\fint_{t_0-r^2}^{t_0}\int_{B_r} \fint_{B_r} \frac{\max \left\{v(x,t), v(y,t)\right\}^2}{|x-y|^{n+2s}}|\psi(x)-\psi(y)|^2 d x dy dt%C  \fint_{t_0-r^2}^{t_0}\int_{B_r} \fint_{B_r} \max \left\{v(x,t), v(y,t)\right\}^2|\psi(x)-\psi(y)|^2 d \mu dt 
           &&\\
		\quad+C   \fint_{t_0-r^2}^{t_0}\fint_{B_r}\left(\int_{\mathbb{R}^n \backslash B_r} \frac{\left|u(y,t)-(u)_{Q_r}\right|}{|x-y|^{n+2s}} d y\right)\left|u(x,t)-(u)_{Q_r}\right|  \psi^2(x)dxdt
		&&\\\quad+\frac{C}{r^2}\fint_{t_0-r^2}^{t_0}\fint_{B_r}\left|u(x,t)-(u)_{Q_r}\right|^2 dxdt.
     \end{array}
 \end{equation} 
 We need to estimate each term present in the RHS to get exact results for our purpose.
\begin{description}[leftmargin=0cm]
\item{\textbf{Estimate for the 1st term:}} Since $|\nabla \psi|\leq C/r$, we get
\begin{equation}{\label{1st}}
    \begin{array}{l}
%\fint_{t_0-r^2}^{t_0}\int_{B_r} \fint_{B_r} \max \left\{v(x,t), v(y,t)\right\}^2|\psi(x)-\psi(y)|^2 d \mu dt\\= 
\quad\fint_{t_0-r^2}^{t_0}\int_{B_r} \fint_{B_r} \frac{\max \left\{v(x,t), v(y,t)\right\}^2}{|x-y|^{n+2s}}|\psi(x)-\psi(y)|^2 d x dy dt\\\leq Cr^{-2}\fint_{t_0-r^2}^{t_0}\int_{B_r} \fint_{B_r} \frac{\left|u(x,t)-(u)_{Q_r}\right|^2}{\left| x-y\right|^{n+2(s-1)}}dxdydt \\\leq Cr^{-2s}\fint_{t_0-r^2}^{t_0}\fint_{B_r} \left|u(x,t)-(u)_{Q_r}\right|^2dxdt.
    \end{array}
\end{equation} 
\item{\textbf{Estimate for the 2nd term:}} $\psi$ is supported inside $B_{3r/4}$ implies $\frac{|y-x_0|}{|x-y|}\leq 4$, so we get by H\"older inequality
\begin{equation*}
\begin{array}{l}
   \quad \fint_{t_0-r^2}^{t_0}\fint_{B_r}\left(\int_{\mathbb{R}^n \backslash B_r} \frac{\left|u(y,t)-(u)_{Q_r}\right|}{|x-y|^{n+2s}} d y\right)\left|u(x,t)-(u)_{Q_r}\right| \psi^2(x) dxdt\\\leq c\left(\fint_{t_0-r^2}^{t_0}\left(\int_{\mathbb{R}^n \backslash B_r} \frac{\left|u(y,t)-(u)_{Q_r}\right|}{|y-x_0|^{n+2s}} d y\right)^2d t\right)^{1/2}\times\left(\fint_{t_0-r^2}^{t_0}\left( \fint_{B_r}\left|u(x,t)-(u)_{Q_r}\right| dx\right)^2dt\right)^{1/2}\\\leq c\left(\int_{t_0-r^2}^{t_0}\left(\int_{\mathbb{R}^n \backslash B_r} \frac{\left|u(y,t)-(u)_{Q_r}\right|}{|y-x_0|^{n+2s}} d y\right)^2d t\right)^{1/2}\times\left(\frac{1}{r^{2}}\fint_{t_0-r^2}^{t_0}\left( \fint_{B_r}\left|u(x,t)-(u)_{Q_r}\right| dx\right)^2dt\right)^{1/2}.
\end{array}
\end{equation*} 
Now, using Young's inequality, we get
\begin{equation}{\label{second}}
    \begin{array}{l}
     \quad   \fint_{t_0-r^2}^{t_0}\fint_{B_r}\left(\int_{\mathbb{R}^n \backslash B_r} \frac{\left|u(y,t)-(u)_{Q_r}\right|}{|x-y|^{n+2s}} d y\right)\left|u(x,t)-(u)_{Q_r}\right| \psi^2(x) dxdt\\\leq C\left(\int_{t_0-r^2}^{t_0}\left(\int_{\mathbb{R}^n \backslash B_r} \frac{\left|u(y,t)-(u)_{Q_r}\right|}{|y-x_0|^{n+2s}} d y\right)^2d t\right)^{1/2}\times\left(\frac{1}{r^{2}}\fint_{t_0-r^2}^{t_0}\fint_{B_r}\left|u(x,t)-(u)_{Q_r}\right|^2 dxdt\right)^{1/2} \\\leq C\int_{t_0-r^2}^{t_0}\left(\int_{\mathbb{R}^n \backslash B_r} \frac{\left|u(y,t)-(u)_{Q_r}\right|}{|y-x_0|^{n+2s}} d y\right)^2d t+\frac{C}{r^{2}}\fint_{t_0-r^2}^{t_0} \fint_{B_r}\left|u(x,t)-(u)_{Q_r}\right|^2  dxdt.
\end{array}
\end{equation}
 \end{description}
So \textbf{the final Caccioppoli estimate} follows from \cref{cacc1}, using \cref{1st,second} that 
\begin{equation}{\label{caccfinal}}
     \begin{array}{lcr}
        \quad   \fint_{t_0-r^2/4}^{t_0}\int_{B_{r/2}} \fint_{B_{r/2}}\frac{\left|u(x,t)-u(y,t)\right|^2}{|x-y|^{n+2s}} d xdydt +	\fint_{t_0-r^2/4}^{t_0}\fint_{B_{r/2}} \left|\nabla u(x,t) \right|^2 dxdt
           &&\\ \leq C\int_{t_0-r^2}^{t_0}\left(\int_{\mathbb{R}^n \backslash B_r} \frac{\left|u(y,t)-(u)_{Q_r}\right|}{|y-x_0|^{n+2s}} d y\right)^2d t +\frac{C}{r^{2s}}\fint_{t_0-r^2}^{t_0}\fint_{B_r}\left|u(x,t)-(u)_{Q_r}\right|^2 dxdt&&\\\quad+\frac{C}{r^2}\fint_{t_0-r^2}^{t_0}\fint_{B_r}\left|u(x,t)-(u)_{Q_r}\right|^2 dxdt,
     \end{array}
 \end{equation}
 where $C\equiv C(n,s)$.
\subsection{{Difference estimate}}
We now prove the difference estimate for $u-w$, where $u$ is solution to our original problem and $w \in L^2(I;u+W_0^{1, 2}(B_{r / 4}))$ is the (unique) solution to
\begin{eqnarray}{\label{heat}}
w_t-\Delta w=0 
&  \text{ in } B_{r/4} \times I, \nonumber\\w=u &  \text{ on } \partial B_{r/4}\times I, \\ w(., \tau_1)=u(., \tau_1) & \text{ on } B_{r/4}\times \{t=\tau_1\}\nonumber,
\end{eqnarray}
where $I=(\tau_1,\tau_2)=(t_0-r^2/16,t_0)\Subset{(0,T)}$ and $B_r\equiv B_r(x_0)\Subset \Omega.$ 
Such a solution always exists as $ u\in C(I;L^2_{\textit{loc}}(\Omega))$ and hence $\underset{t\in I}{\operatorname{sup}}\left|\left|u(.,t)\right|\right|_{L^2(B_{r/4})}< \infty$, which in turn implies that $\left|\left|u(.,\tau_1)\right|\right|_{L^2(B_{r/4})}< \infty$ i.e. $u(.,\tau_1)\in L^2(B_{r/4}).$ The details can be found in [\citealp{LCE}, Chapter-7, Theorem 1,2,3]. So $w$ solves the Euler-Lagrange equation
\begin{equation}{\label{refsol}}
\fint_{t_0-r^2/16}^{t_0} \fint_{B_{r/ 4}(x_0)}( w_{h,t} \varphi +(\nabla w)_h \cdot \nabla \varphi )d x dt=0, \quad \text { for every } \varphi \in W_0^{1, 2}(B_{r / 4}).%\cap L^\infty(B_{r/4}).    
\end{equation}
For the rest of this article, we will use the notion for the backward parabolic cylinder as $Q_d=B_d(x_0)\times(t_0-d^2,t_0)$ where $B_d(x_0)\Subset \Omega $ and $(t_0-d^2,t_0)\Subset(0,T)$. The following results are for $w$.\\\\
\textbf{Poincar\'{e}:} The Poincar\'{e} inequality for $w$ will be very useful for our purpose. If we use \cref{Poincar\'{e}} with $\rho \equiv 1$, $I=(t_0-d^2,t_0)$ and for $B_{d}$ where $d\leq r/8$, we will get for our case
\begin{equation*}
 \fint_{t_0-d^2}^{t_0}\fint_{{B}_{d}}\left|{w-(w)_{Q_{d}}}\right|^{2} d x d t \leq C d^2\fint_{t_0-d^2}^{t_0}\fint_{{B}_{d} }|\nabla w|^{2} d xdt+\sup _{t_1, t_2 \in (t_0-d^2,t_0)}\left|(w)_\mu(t_2)-(w)_\mu(t_1)\right|^{2},
\end{equation*}
where $\mu(x) \in C_c^{\infty}({B}_d)$ such that $\int_{{B}_d} \mu(x) d x=1$ with $|\mu| \leq \frac{C(n)}{d^n}$ and $|\nabla \mu| \leq \frac{C(n)}{d^{n+1}}$ and
\begin{equation*}
(w)_\mu(t_i):=\int_{{B}_{d}} w(x, t_i) \mu(x) d x \text { for } i=1,2 .
 \end{equation*}
Now choosing $\mu$ as a test function in \cref{refsol}, for the domain $B_{d}\times(t_1,t_2)$, where $t_1,t_2\in(t_0-d^2, t_0)$, we get
\begin{equation*}
\int_{t_1}^{t_2} \int_{B_{d}}( w_{h,t} \mu +(\nabla w)_h \cdot \nabla \mu )d x dt=0,
\end{equation*}
where $w_h$ is the usual Steklov average.
Now using the fact that $|\nabla \mu| \leq \frac{C(n)}{d^{n+1}}$, we get 
\begin{equation}{\label{w_t}}
    \begin{array}{l}
     \quad\quad   \int_{t_1}^{t_2} \int_{B_{d}}w_{h,t} \mu (x)  dxdt=-\int_{t_1}^{t_2} \int_{B_{d}}(\nabla w)_h \cdot \nabla \mu d x dt \\ \implies \int_{B_{d}}w_{h} (x,t_2)\mu(x)  d x-\int_{B_{d}}w_{h} (x,t_1)\mu(x)  dx=-\int_{t_1}^{t_2} \int_{B_{d}}(\nabla w)_h \nabla \mu  d x dt \\ \implies \left|\int_{B_{d}}w_{h} (x,t_2)\mu(x)  d x-\int_{B_{d}}w_{h} (x,t_1)\mu(x)  dx\right|\leq \int_{t_1}^{t_2} \int_{B_{d}}|(\nabla w)_h| |\nabla \mu | d x dt
       \\
\implies \left|(w_h)_\mu(t_2)-(w_h)_\mu(t_1\right)|\leq \frac{C}{d^{n+1}} \int_{t_0-d^2}^{t_0} \int_{B_{d}}\left|(\nabla w)_h\right|dxdt \\ \implies \sup _{t_1, t_2 \in (t_0-d^2,t_0)}\left|(w_h)_\mu(t_2)-(w_h)_\mu(t_1\right)|^{2} \leq Cd^2 \left(\fint_{t_0-d^2}^{t_0} \fint_{B_{d}}\left|(\nabla w)_h\right|dxdt \right)^2 \\ \implies \sup _{t_1, t_2 \in (t_0-d^2,t_0)}\left|(w_h)_\mu(t_2)-(w_h)_\mu(t_1)\right|^{2} \leq Cd^2\fint_{t_0-d^2}^{t_0} \fint_{B_{d}}\left|(\nabla w)_h\right|^2dxdt.
    \end{array}
\end{equation}
Using these, we get for each $d\leq r/8$,
\begin{equation}{\label{poin}}
    \fint_{t_0-d^2}^{t_0}\fint_{{B}_{d}}\left|{w-(w)_{Q_d}}\right|^{2} d xdt \leq C d^2\fint_{t_0-d^2}^{t_0}\fint_{{B}_{d} }|\nabla w|^{2} d xdt.
\end{equation}
\textbf{Regularity:} The following regularity estimates are followed by $w$. For details, we refer [\citealp{GaryM}, Chapter 4, Lemma 4.5]
\begin{equation}{\label{regw}}
    \fiint_{Q_d} w^2(x,t)dxdt\leq c\fiint_{Q_{r/8}} w^2(x,t)dxdt 
    \end{equation}
    and 
    \begin{equation}{\label{reguw}}
    \fiint_{Q_d} |w(x,t)-(w)_{Q_d}|^2dxdt\leq c\left(\frac{d}{r}\right)^2\fiint_{Q_{r/8}} |w(x,t)-(w)_{Q_{r/8}}|^2dxdt,
\end{equation}
for all $d\in(0,r/8]$, where $c$ is a constant depending only on $n$.
\begin{theorem}{\label{diffest}}[Energy estimate]
	\label{theorem3.2}
	Let $w \in L^2(I;u+W_0^{1, 2}(B_{r / 4}))$ with $r\leq 1$ be as in \cref{heat}. Then the following
	\begin{equation*}
	\begin{array}{l}
		\quad\fint_{t_0-r^2/16}^{t_0}\fint_{B_{r / 4}(x_0)}|\nabla (u(x,t)-w(x,t))|^2 d x dt +\frac{8}{r^2}
  \fint_{B_{r/4}(x_0)} (u-w)^2(x,t_0)dx\\\leq C\bigg[\int_{t_0-r^2}^{t_0}\left(\int_{\mathbb{R}^n \backslash B_r(x_0)} \frac{\left|u(y,t)-(u)_{Q_r}\right|}{|y-x_0|^{n+2s}} d y\right)^2d t\\\quad+ r^{2-2s}\fint_{t_0-r^2/4}^{t_0}\int_{B_{r/2}(x_0)}\fint_{B_{r/2}(x_0)}\frac{\left| u(x,t)-u(y,t)\right|^2}{| x-y|^{n+2s}}dxdydt\\\quad+r^{2-2s}r^{-2s}\fint_{t_0-r^2}^{t_0}\fint_{B_r\left(x_0\right)}\left|u(x,t)-k\right|^2 d x d t\bigg]
	\end{array}
	\end{equation*}
	holds for $k\in \mathbb{R}$, where $C \equiv C(n,s).$ 
\end{theorem}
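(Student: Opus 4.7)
The plan is to mimic the Caccioppoli argument of \cref{energy_estimate} applied to $\phi := u-w$. Since $\phi(\cdot, t) \in W_0^{1,2}(B_{r/4})$ for a.e.\ $t \in I := (\tau_1, \tau_2) = (t_0 - r^2/16, t_0)$, the Steklov-averaged weak formulation \cref{steksol} for $u$ and the Steklov-averaged formulation \cref{refsol} for $w$ can both be tested with $\phi_h$. Subtracting the two and passing $h \to 0$ exactly as in the treatment of the terms $I_1^h$--$I_4^h$ in the proof of \cref{energy_estimate}, and using the initial condition $w(\cdot, \tau_1) = u(\cdot, \tau_1)$ on $B_{r/4}$ to kill the boundary contribution at $t = \tau_1$, will yield the fundamental identity
\begin{equation*}
\frac{1}{2}\int_{B_{r/4}}(u-w)^2(x, t_0)\,dx + \int_I\int_{B_{r/4}}|\nabla(u-w)|^2\,dx\,dt \;=\; -\int_I \mathcal{E}(u, u-w, t)\,dt.
\end{equation*}

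Using that $\phi$ vanishes outside $B_{r/4}$, I will split the nonlocal form at radius $r/2$ as
\begin{equation*}
\mathcal{E}(u,\phi,t) = \underbrace{\tfrac{1}{2}\iint_{B_{r/2}\times B_{r/2}}(u(x)-u(y))(\phi(x)-\phi(y))\,K\,dxdy}_{=:\,\mathcal{E}_1} \;+\; \underbrace{\iint_{B_{r/4}\times(\mathbb{R}^n\setminus B_{r/2})}(u(x)-u(y))\phi(x)\,K\,dxdy}_{=:\,\mathcal{E}_2}.
\end{equation*}
For $\mathcal{E}_1$ I apply Cauchy--Schwarz with weight $K$ and then the rescaled version of \cref{embedding2}, which delivers $\iint_{\mathbb{R}^n\times\mathbb{R}^n}(\phi(x)-\phi(y))^2\,K\,dxdy \leq Cr^{2-2s}\int_{B_{r/4}}|\nabla\phi|^2$; a subsequent Young's inequality will separate an absorbable $\varepsilon\int_I\int_{B_{r/4}}|\nabla\phi|^2$ piece and the Gagliardo contribution $Cr^{2-2s}\int_I\iint_{B_{r/2}\times B_{r/2}}\tfrac{(u(x)-u(y))^2}{|x-y|^{n+2s}}\,dxdydt$.

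For $\mathcal{E}_2$ the key geometric observation is that $x \in B_{r/4}$ and $y \in \mathbb{R}^n \setminus B_{r/2}$ force $|x-y| \geq \tfrac{1}{2}|y-x_0|$, so $K(x,y) \leq C|y-x_0|^{-n-2s}$. Writing $u(x)-u(y) = (u(x)-k) - (u(y)-k)$ splits $\mathcal{E}_2$ into two pieces. The $u(x)-k$ piece reduces, via $\int_{\mathbb{R}^n\setminus B_{r/2}}|y-x_0|^{-n-2s}\,dy \sim r^{-2s}$, to $Cr^{-2s}\int_{B_{r/4}}|u(x,t)-k||\phi(x,t)|\,dx$. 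The $u(y)-k$ piece is split further at $B_r$: on the annulus $B_r\setminus B_{r/2}$ (where $|y-x_0|^{-n-2s} \lesssim r^{-n-2s}$) a Cauchy--Schwarz in $y$ bounds the contribution by $Cr^{-n/2-2s}(\int_{B_r}|u(\cdot,t)-k|^2)^{1/2}\int_{B_{r/4}}|\phi(\cdot,t)|$, while on the genuine tail $B_r^c$ the contribution is, by definition, $\int_{B_{r/4}}|\phi(\cdot,t)|\cdot\operatorname{Tail}(u-k;x_0,r,t)$ (taking $k = (u)_{Q_r}$ here reproduces the tail appearing in the statement). Each of these three sub-estimates will now be processed identically: Cauchy--Schwarz in $x$ over $B_{r/4}$, the zero-boundary Poincar\'e inequality $\|\phi\|_{L^2(B_{r/4})} \leq Cr\|\nabla\phi\|_{L^2(B_{r/4})}$, integration in time with Cauchy--Schwarz, and Young's inequality with a small parameter. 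After absorbing the resulting $\varepsilon\int_I\int_{B_{r/4}}|\nabla\phi|^2$ contributions into the LHS, the two non-tail sub-estimates combine into $Cr^{2-4s}\int_I\int_{B_r}|u-k|^2\,dxdt$ while the tail sub-estimate produces $Cr^{n+2}\int_I\operatorname{Tail}(u-k;x_0,r,t)^2\,dt$. Dividing the whole identity by $|I|\cdot|B_{r/4}| \sim r^{n+2}$ (so that $\tfrac{1}{2|I|} = 8/r^2$) and freely enlarging time intervals where integrands are nonnegative will then recast everything in the averaged form displayed in the statement.

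The main obstacle will be the careful bookkeeping in $\mathcal{E}_2$: the interplay of Poincar\'e, Cauchy--Schwarz and Young must be arranged so that, after the final normalization, the coefficients land exactly on the advertised factors $r^{2-2s}$ (Gagliardo term) and $r^{2-2s}\cdot r^{-2s} = r^{2-4s}$ ($|u-k|^2$ term). All remaining steps -- the Steklov justification, the vanishing of the initial-time boundary term, and the estimate of $\mathcal{E}_1$ -- are direct parallels of the corresponding steps in the proof of \cref{energy_estimate} and should cause no new difficulty.
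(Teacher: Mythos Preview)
Your proposal is correct and follows essentially the same route as the paper: test both the $u$-equation and the $w$-equation with the Steklov average of $\phi=u-w$, split the nonlocal form at radius $r/2$, handle the inner part via Cauchy--Schwarz together with the rescaled embedding \cref{fracemb} and Young's inequality, and handle the outer part via the geometric kernel bound $|x-y|\gtrsim|y-x_0|$, a further split at $B_r$, Poincar\'e on $\phi$, and Young's inequality to absorb the gradient pieces. The only cosmetic difference is that the paper bounds $|u(x)-u(y)|$ by $\max\{|u(x)-k|,|u(y)-k|\}$ before splitting, whereas you write $u(x)-u(y)=(u(x)-k)-(u(y)-k)$ directly; the subsequent estimates are identical.
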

\begin{proof}
We extend $w\equiv u$ outside $B_{r/4}$, and see that $(u-w)_h\in L^2(I;W^{1,2}_0(B_{r/4}))$. Then \cref{embedding2} implies that $(u-w)_h\in L^2(I;W^{s,2}_0(B_{r/4}))$, and since $(u-w)\equiv 0$ outside $B_{r/4}$, so $(u-w)_h\in L^2(I;W^{s,2}(\mathbb{R}^n))$. Further $(u-w)_h\in W^{1,2}(I ; L^2(B_{r}))$. So, we can use $(u-w)_h$ as a test function in both cases. We have
\begin{equation}
	\label{differ}
	\begin{array}{l}
		 \fint_{t_0-r^2/16}^{t_0} \fint_{B_{r / 4}}\left|\nabla u_h-\nabla w_h\right|^2 dx dt
   =\underbrace{-\fint_{t_0-r^2/16}^{t_0} \fint_{B_{r /4}}u_{h,t} (u-w)_h dx dt +\fint_{t_0-r^2/16}^{t_0} \fint_{B_{r / 4}}w_{h,t} (u-w)_h dx dt}_{(I)_h}
	\\\qquad\qquad\qquad\qquad\qquad\qquad\qquad
		  \underbrace{- \frac{c}{2}\fint_{t_0-r^2/16}^{t_0}\int_{B_{r/2}} \fint_{B_{r/2 }}(u(x,t)-u(y,t))_h((u-w)(x,t)-(u-w)(y,t))_h d\mu dt}_{(II)_h} \\\qquad\qquad\qquad\qquad\qquad\qquad\qquad
		 \underbrace{ -c \fint_{t_0-r^2/16}^{t_0}\int_{\mathbb{R}^n \backslash B_{r/2 }}  \fint_{B_{r /2}}(u(x,t)-u(y,t))_h (u-w)_h(x,t) \times K(x, y, t) d x  dy dt}_{(III)_h}.
	\end{array}
\end{equation}
\begin{description}[leftmargin=0cm]
    \item{\textbf{Estimate for $(I)_h$:}} Since $u\equiv w$ outside $B_{r/4}$ and $(u-w)(x,t_0-r^2/16)=0$,
\begin{equation}{\label{diff1}}
    \begin{array}{rcl}
         (\mathrm{I})_h
         &=&-\fint_{t_0-r^2/16}^{t_0} \fint_{B_{r / 4}}(u-w)_{h,t} (u-w)_h dx dt=-\frac{1}{2}\fint_{t_0-r^2/16}^{t_0} \fint_{B_{r / 4}}\partial _t (u-w)_h^2(x,t) dx dt\\&=&-\frac{1}{2r^2/16} \fint_{B_{r/4}} (u-w)_h^2(x,t_0)dx.
    \end{array}
\end{equation}
\item{\textbf{Estimate for $(II)_h$:}} H\"older inequality and embedding inequality of \cref{fracemb} yields
\begin{equation*}
    \begin{array}{l}
    |(\mathrm{II})_h| \leq C\left(\fint_{t_0-r^2/4}^{t_0} \int_{B_{r/2}} \fint_{B_{r/2 }} \frac{\left|\left(u(x,t)-u(y,t)\right)_h\right|^2}{|x-y|^{n+2s}} dxdydt\right)^{1/2} \quad\quad\quad\quad\quad\quad\quad\quad\quad\quad\quad\quad\quad\quad\quad\quad\quad\quad\quad\quad\quad\quad\quad\quad\quad\quad\quad\quad\quad\quad
\\\quad\quad\quad\quad\times\left(\fint_{t_0-r^2/16}^{t_0} \int_{B_{r/2}} \fint_{B_{r / 2}}\frac{\left|\left((u-w)(x,t)-(u-w)(y,t)\right)_h\right|^2}{|x-y|^{n+2s}} dxdydt\right)^{1 / 2} 
\end{array}
\end{equation*}
$$
\begin{aligned}
&\leq Cr^{(1-s)}\left(\fint_{t_0-r^2/4}^{t_0} \int_{B_{r/2}} \fint_{B_{r /2}} \frac{\left|\left(u(x,t)-u(y,t)\right)_h\right|^2}{|x-y|^{n+2s}} dxdydt\right)^{1/2}
\left(\fint_{t_0-r^2/16}^{t_0} \fint_{B_{r/2}} \left|\nabla(u-w)_h\right|^2 dxdt\right)^{1 / 2}\\
& \leq Cr^{(1-s)}\left(\fint_{t_0-r^2/4}^{t_0} \int_{B_{r/2}} \fint_{B_{r /2}} \frac{\left|\left(u(x,t)-u(y,t)\right)_h\right|^2}{|x-y|^{n+2s}} dxdydt\right)^{1/2}
\left(\fint_{t_0-r^2/16}^{t_0} \fint_{B_{r/4}} \left|\nabla(u-w)_h\right|^2 dxdt\right)^{1 / 2},
\end{aligned}
$$
with $C \equiv C(n,s)$. We have used the fact that $u-w\equiv 0$ outside $B_{r/4}$ in the last line.\\ Now by Young's inequality with $\varepsilon$ we get
\begin{equation}{\label{diff2}}
\begin{array}{rcl}
|(\mathrm{II})_h| &\leq& Cr^{2(1-s)}\fint_{t_0-r^2/4}^{t_0}\int_{B_{r/2}} \fint_{B_{r/2 }} \frac{\left|\left(u(x,t)-u(y,t)\right)_h\right|^2}{|x-y|^{n+2s}} dxdydt\\&&+\varepsilon \fint_{t_0-r^2/16}^{t_0} \fint_{B_{r/4}} \left|\nabla(u-w)_h\right|^2 dxdt.
\end{array}
\end{equation}
\item{\textbf{Estimate for $(III)_h$:}} Since $x \in B_{r / 4}$ and $ y \in \mathbb{R}^n \backslash B_{r /2}$, we can see that $\frac{|y-x_0|} {|x-y|} \leq 2$ and as $w$ is supported in $B_{r / 4}$, we estimate
\begin{equation*}
\begin{array}{rcl}
 {|(\mathrm{III})_h|} &\leq& C\fint_{t_0-r^2/16}^{t_0}\int_{\mathbb{R}^n \backslash B_{r/2 }}  \fint_{B_{r /2}}\frac{|((u(x,t)-k)_h-(u(y,t)-k)_h) (u-w)_h(x,t)| }{|x-y|^{n+2s}} d x  dy dt\\& \leq &C \fint_{t_0-r^2/16}^{t_0} \int_{\mathbb{R}^n \backslash B_{r /2}}  \fint_{B_{r/2 }}  \frac{\max {\left\{ |u_h(x,t)-k|,|u_h(y,t)-k| \right \} }|(u-w)_h(x,t)|}{|x-y|^{n+2s }} d x dy dt \\& \leq &C \fint_{t_0-r^2/16}^{t_0} \int_{\mathbb{R}^n \backslash B_{r /2}}  \fint_{B_{r/2 }}  \frac{\max {\left\{ |u_h(x,t)-k|,|u_h(y,t)-k| \right \} }|(u-w)_h(x,t)|}{|y-x_0|^{n+2s }} d x dy dt.
 \end{array}
 \end{equation*}
 Now, separating the integrals with respect to $x$ and $y$, we get
\begin{equation*}
\begin{array}{rcl}
 {|(\mathrm{III})_h|}& \leq &C r^{-2s } \fint_{t_0-r^2/16}^{t_0} \fint_{B_{r /2}}|u_h(x,t)-k||(u-w)_h(x,t)| d x d t\\
&& +C\fint_{t_0-r^2/16}^{t_0}\left( \int_{\mathbb{R}^n \backslash B_{r /2}} \frac{|u_h(y,t)-k|}{|y-x_0|^{n+2s }} d y \right)\times\left(\fint_{B_{r/4 }}|(u-w)_h(x,t)| dx \right)dt\\
& \leq& C r^{-2s } \fint_{t_0-r^2/16}^{t_0} \fint_{B_{r /2}}|u_h(x,t)-k||(u-w)_h(x,t)| d x dt\\
&& +C\fint_{t_0-r^2/16}^{t_0}\left( \int_{\mathbb{R}^n \backslash B_{r }} \frac{|u_h(y,t)-k|}{|y-x_0|^{n+2s }} d y \right)\times\left(\fint_{B_{r/4 }}|(u-w)_h(x,t)| dx \right)dt\\&&+C\fint_{t_0-r^2/16}^{t_0}\left( \int_{B_r \backslash B_{r /2}} \frac{|u_h(y,t)-k|}{|y-x_0|^{n+2s }} d y \right)\times\left(\fint_{B_{r /4}}|(u-w)_h(x,t)| dx\right) dt\\
& \leq& C r^{-2s } \fint_{t_0-r^2/16}^{t_0} \fint_{B_{r /2}}|u_h(x,t)-k||(u-w)_h(x,t)| d x dt\\
& &+C\fint_{t_0-r^2/16}^{t_0}\left( \int_{\mathbb{R}^n \backslash B_{r }} \frac{|u_h(y,t)-k|}{|y-x_0|^{n+2s }} d y \right)\times\left(\fint_{B_{r/4 }}|(u-w)_h(x,t)| dx \right)dt\\&&+Cr^{-2s}\fint_{t_0-r^2/16}^{t_0}\left( \fint_{B_r } |u_h(y,t)-k| d y \right)\times\left(\fint_{B_{r /4}}|(u-w)_h(x,t)| dx\right) dt,
\end{array}
\end{equation*}
for $C \equiv C(n,s)$. Using H\"older inequality, we get
$$
\begin{aligned}
{|(\mathrm{III})_h|} 
 \leq &C\Bigg[r^{-2s } \left(\fint_{t_0-r^2/16}^{t_0} \fint_{B_{r /2}}|u_h(x,t)-k|^2dxdt\right)^{1/2}\times\left(\fint_{t_0-r^2/16}^{t_0} \fint_{B_{r/4 }}|(u-w)_h(x,t)|^2 dxdt\right)^{1 /2}\\&+\left(\fint_{t_0-r^2/16}^{t_0}\left( \int_{\mathbb{R}^n \backslash B_{r }} \frac{|u_h(y,t)-k|}{|y-x_0|^{n+2s }} d y\right)^2 dt\right)^{1/2}\times\left(\fint_{t_0-r^2/16}^{t_0}\left(\fint_{B_{r/4 }}|(u-w)_h(x,t)| dx\right)^2 dt\right)^{1/2}\\&+r^{-2s}\left(\fint_{t_0-r^2/16}^{t_0}\left( \fint_{B_r } |u_h(y,t)-k| d y \right)^2dt\right)^{1/2}\times\left(\fint_{t_0-r^2/16}^{t_0}\left(\fint_{B_{r /4}}|(u-w)_h(x,t)| dx\right)^2 dt\right)^{1/2}\Bigg]
\end{aligned}
$$
$$
\begin{aligned}
 \leq& C\Bigg[r^{-2s } \left(\fint_{t_0-r^2}^{t_0} \fint_{B_{r }}|u_h(x,t)-k|^2dxdt\right)^{1/2}\times\left(\fint_{t_0-r^2/16}^{t_0}\fint_{B_{r/4 }}|(u-w)_h(x,t)|^2 dxdt\right)^{1 /2}\\&+\left(r^{-2}\int_{t_0-r^2}^{t_0}\left( \int_{\mathbb{R}^n \backslash B_{r }} \frac{|u_h(y,t)-k|}{|y-x_0|^{n+2s }} d y\right)^2 dt\right)^{1/2}\times\left(\fint_{t_0-r^2/16}^{t_0}\left(\fint_{B_{r/4 }}|(u-w)_h(x,t)| dx\right)^2 dt\right)^{1/2}\Bigg]\\
 \leq& Cr^{-s}\Bigg[r^{-s} \left(\fint_{t_0-r^2}^{t_0} \fint_{B_{r }}|u_h(x,t)-k|^2dxdt\right)^{1/2}+\left(r^{-2+2s}\int_{t_0-r^2}^{t_0}\left( \int_{\mathbb{R}^n \backslash B_{r }} \frac{|u_h(y,t)-k|}{|y-x_0|^{n+2s }} d y\right)^2 dt\right)^{1/2}\Bigg]\\&\times\left(\fint_{t_0-r^2/16}^{t_0}\fint_{B_{r/4 }}|(u-w)_h(x,t)|^2 dxdt\right)^{1 /2}.
\end{aligned}
$$
Now we observe that $(u-w)_h\equiv 0$ on and outside $B_{r/4}$. This observation, along with Poincar\'{e} inequality gives us the following
\begin{equation*}
\fint_{t_0-r^2/16}^{t_0}\fint_{B_{r /4}}|(u-w)_h(x,t)|^2 dxdt
 \leq  C r^{2}\fint_{t_0-r^2/16}^{t_0} \fint_{B_{r /4}}|\nabla (u-w)_h|^2 dxdt,
\end{equation*}
where $C\equiv C(n)$.
So we get 
$$
\begin{aligned}
{|(\mathrm{III})_h|} 
 \leq& Cr^{1-s}\Bigg[r^{-s} \left(\fint_{t_0-r^2}^{t_0} \fint_{B_{r }}|u_h(x,t)-k|^2dxdt\right)^{1/2}+\left(r^{-2+2s}\int_{t_0-r^2}^{t_0}\left( \int_{\mathbb{R}^n \backslash B_{r }} \frac{|u_h(y,t)-k|}{\left|y-x_0\right|^{n+2s }} d y\right)^2 dt\right)^{1/2}\Bigg]\\&\times\left(\fint_{t_0-r^2/16}^{t_0}\fint_{B_{r/4 }}|\nabla(u-w)_h(x,t)|^2 dxdt\right)^{1 /2}.
\end{aligned}
$$
Now using Young's inequality with $\varepsilon$, we get
\begin{equation}{\label{diff3}}
	\begin{array}{rcl}
    {|(\mathrm{III})_h|}&\leq& Cr^{2(1-s) }\Bigg[r^{-2s}\fint_{t_0-r^2}^{t_0} \fint_{B_{r }}|u_h(x,t)-k|^2dxdt+r^{-2+2s}\int_{t_0-r^2}^{t_0}\left( \int_{\mathbb{R}^n \backslash B_{r }} \frac{|u_h(y,t)-k|}{\left|y-x_0\right|^{n+2s }} d y\right)^2 dt\Bigg]\\&&+\varepsilon\fint_{t_0-r^2/16}^{t_0}\fint_{B_{r/4 }}|\nabla(u-w)_h(x,t)|^2 dxdt.\end{array}
\end{equation}
\end{description}
Combining \cref{diff1,diff2,diff3} and choosing $\varepsilon=\frac{1}{4}$, from \cref{differ} we get
\begin{multline*}
\fint_{t_0-r^2/16}^{t_0}\fint_{B_{r / 4}(x_0)}|\nabla (u(x,t)-w(x,t))_h|^2 d x dt +\frac{8}{r^2}\fint_{B_{r/4}(x_0)}\left | (u-w)_h^2(x,t_0)\right| dx\\
\leq Cr^{1-s}\bigg[r^{(-1+s)}\int_{t_0-r^2}^{t_0}\left( \int_{\mathbb{R}^n \backslash B_r (x_0)} \frac{|u_h(y,t)-k|}{\left|y-x_0\right|^{n+2s }} d y\right)^2 dt
+r^{(1-s)}r^{-2s}\fint_{t_0-r^2}^{t_0} \fint_{B_r\left(x_0\right)}\left|u_h(x,t)-k\right|^2 d x d t\\+ r^{(1-s)}\fint_{t_0-r^2/4}^{t_0}\int_{B_{r/2}(x_0)}\fint_{B_{r/2}(x_0)}\frac{\left| \left(u(x,t)-u(y,t)\right)_h\right|^2}{\left| x-y\right|^{n+2s}}dxdydt\bigg],
\end{multline*}
where $C\equiv C(n,s)$.\\
Now using convergence properties of $u_h$ and \cref{steklov}, we finally get
\begin{multline}{\label{diffinal}}
\fint_{t_0-r^2/16}^{t_0}\fint_{B_{r / 4}(x_0)}|\nabla (u(x,t)-w(x,t))|^2 d x dt +\frac{8}{r^2}\fint_{B_{r/4}(x_0)}\left | (u-w)^2(x,t_0)\right| dx\\
\leq Cr^{1-s}\bigg[r^{(-1+s)}\int_{t_0-r^2}^{t_0}\left( \int_{\mathbb{R}^n \backslash B_r (x_0)} \frac{|u(y,t)-k|}{\left|y-x_0\right|^{n+2s }} d y\right)^2 dt
+r^{(1-s)}r^{-2s}\fint_{t_0-r^2}^{t_0} \fint_{B_r\left(x_0\right)}\left|u(x,t)-k\right|^2 d x d t\\+ r^{(1-s)}\fint_{t_0-r^2/4}^{t_0}\int_{B_{r/2}(x_0)}\fint_{B_{r/2}(x_0)}\frac{\left| \left(u(x,t)-u(y,t)\right)\right|^2}{\left| x-y\right|^{n+2s}}dxdydt\bigg]
\end{multline}
holds with $C \equiv C(n,s).$
\end{proof}
\subsection{Merging Caccioppoli and difference estimates}{\label{varepsilon}}We will now combine the Caccioppoli inequality with the difference estimate. For that we use $k=(u)_{Q_r}$ in \cref{diffinal} and note that for $r\leq 1$, $r^{-2s}\leq r^{-2}$, so using \cref{caccfinal} we get 
\begin{multline}{\label{DIFFER}}
\fint_{t_0-r^2/16}^{t_0}\fint_{B_{r / 4}(x_0)}|\nabla (u(x,t)-w(x,t))|^2 d x dt\leq Cr^{1-s}\bigg[r^{(-1+s)}\int_{t_0-r^2}^{t_0}\left( \int_{\mathbb{R}^n \backslash B_r (x_0)} \frac{|u(y,t)-(u)_{Q_r}|}{\left|y-x_0\right|^{n+2s }} d y\right)^2 dt
\\\qquad\qquad\qquad\qquad\qquad\qquad\qquad\qquad\qquad\qquad\qquad\qquad+r^{-2}\fint_{t_0-r^2}^{t_0} \fint_{B_r\left(x_0\right)}\left|u(x,t)-(u)_{Q_r}\right|^2 d x d t\bigg]\\\qquad\qquad\qquad\qquad\qquad\qquad\qquad\quad\quad\qquad
= Cr^{\varepsilon}\bigg[r^{(-\varepsilon)}\int_{t_0-r^2}^{t_0}\left( \int_{\mathbb{R}^n \backslash B_r (x_0)} \frac{|u(y,t)-(u)_{Q_r}|}{\left|y-x_0\right|^{n+2s }} d y\right)^2 dt\\
\qquad\qquad\qquad\qquad\qquad\qquad\qquad\qquad\qquad\qquad\qquad\qquad+r^{-2+1-s-\varepsilon}\fint_{t_0-r^2}^{t_0} \fint_{B_r\left(x_0\right)}\left|u(x,t)-(u)_{Q_r}\right|^2 d x d t\bigg],
\end{multline}
where $\varepsilon<1-s$ is a very small positive number. Further restrictions on $\varepsilon$ will be determined later. 
Using Poincar\'{e}, we now get the \textbf{final energy estimate} as 
\begin{equation}{\label{DIFFERENCE}}
\begin{array}{l}
\quad\fint_{t_0-r^2/16}^{t_0}\fint_{B_{r / 4}(x_0)}| \left(u(x,t)-w(x,t)\right)|^2 d x dt\\\leq Cr^2\fint_{t_0-r^2/16}^{t_0}\fint_{B_{r / 4}(x_0)}|\nabla (u(x,t)-w(x,t))|^2 d x dt\\\leq Cr^{\varepsilon}\bigg[r^{2-\varepsilon}\int_{t_0-r^2}^{t_0}\left( \int_{\mathbb{R}^n \backslash B_r (x_0)} \frac{|u(y,t)-(u)_{Q_r}|}{\left|y-x_0\right|^{n+2s }} d y\right)^2 dt
+\fint_{t_0-r^2}^{t_0} \fint_{B_r\left(x_0\right)}\left|u(x,t)-(u)_{Q_r}\right|^2 d x d t\bigg].
\end{array}
\end{equation}
\subsection{Decay estimate for the non-local integral}
We now see the following important results that will be helpful later.  
\begin{lemma}{\label{decay}} Let $B_d(x_0) \subset B_{r}(x_0)\Subset\Omega$ be two concentric balls, and $g \in  L^{2}_{\textit{loc}}(0,T; W_{\textit{loc}}^{1,2}(\Omega))%\cap L^2(I ; W_{\textit{loc}}^{s, 2}(\Omega))
%\cap C^0(I ; L_{\textit{loc}}^2(\Omega)) 
\cap L^{\infty}_{\textit{loc}}(0,T ; L_{2s}^{1}(\mathbb{R}^n))%\in L^\infty(0,T;W^{s, 2}(\mathbb{R}^n))
$. Let $(t_0-d^2,t_0)\subset(t_0-r^2,t_0)\Subset(0,T)$; and $Q_d\subset Q_r$ are the corresponding parabolic cylinders. If $d>0$ and $\theta \in(0,1)$ are such that $\theta r \leq d \leq r$, then
\begin{equation}{\label{av}}
\begin{array}{rcl}
   \quad \left(\fiint_{Q_d}\left|g(x,t)-(g)_{Q_d}\right|^2 d x d t\right)^{1/2}&\leq& 2 \theta ^{-1-n/2}\left(\fiint_{Q_r}\left|g(x,t)-(g)_{Q_r}\right|^2 d x d t\right)^{1/2}.
   \end{array}
\end{equation}
Whenever $0<d<r \leq 1$, it holds that
\begin{equation}{\label{snail}}
\begin{array}{l}
\quad  \left(d^{2-\varepsilon}\int_{t_0-d^2}^{t_0}\left( \int_{\mathbb{R}^n \backslash B_{d }} \frac{|g(y,t)-(g)_{Q_d}|}{\left|y-x_0\right|^{n+2s }} d y\right)^2 dt\right)^{1/2}  \\\leq  c\left(\frac{d}{r}\right)^{{(2-\varepsilon)/2}} \left(r^{(2-\varepsilon)}\int_{t_0-r^2}^{t_0}\left( \int_{\mathbb{R}^n \backslash B_{r}} \frac{|g(y,t)-(g)_{Q_r}|}{\left|y-x_0\right|^{n+2s }} d y\right)^2 dt\right)^{1/2} \\
\quad+ d^{(2-\varepsilon) / 2-s} \Bigg[c\int_d^{r}\left(\frac{d}{v}\right)^s \left(\fiint_{Q_v}\left|g(x,t)-(g)_{Q_v}\right|^2 d x d t\right)^{1/2} \frac{d v}{v} 
\\\quad+c\left(\frac{d}{r}\right)^s \left(\fiint_{Q_r}\left|g(x,t)-(g)_{Q_r}\right|^2 d x d t\right)^{1/2}\Bigg],
\end{array}
\end{equation}
with $c \equiv c(n, s)$, and $\varepsilon$ is as in \cref{varepsilon}.
\end{lemma}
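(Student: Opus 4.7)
My plan is to handle the two estimates separately, using the first as a building block for the second. For \cref{av}, I would use Jensen's inequality together with the variance-minimization identity: for any constant $c$,
\[
\fiint_{Q_d}|g-(g)_{Q_d}|^2 \leq \fiint_{Q_d}|g-c|^2.
\]
Choosing $c=(g)_{Q_r}$ and bounding $\fiint_{Q_d}|g-(g)_{Q_r}|^2 \leq (|Q_r|/|Q_d|)\fiint_{Q_r}|g-(g)_{Q_r}|^2 \leq \theta^{-(n+2)}\fiint_{Q_r}|g-(g)_{Q_r}|^2$ gives the scale factor $\theta^{-1-n/2}$; the extra factor of $2$ in the statement then comes from a triangle inequality that also controls $|(g)_{Q_d}-(g)_{Q_r}|$ by the same $L^2$-oscillation on $Q_r$.

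For \cref{snail}, the strategy is a dyadic annular decomposition combined with a telescoping chain of intermediate averages. Let $d_k := 2^k d$ and let $N$ be the largest integer with $d_N \leq r$. I would split
\[
\mathbb{R}^n\setminus B_d = \Bigl(\bigcup_{k=0}^{N-1}(B_{d_{k+1}}\setminus B_{d_k})\Bigr) \cup (\mathbb{R}^n\setminus B_{d_N}),
\]
and on each annulus use the lower bound $|y-x_0|\geq d_k$ to pull out $d_k^{-n-2s}$ from the kernel. On the outer piece $\mathbb{R}^n\setminus B_{d_N}$ (essentially $\mathbb{R}^n\setminus B_r$), I decompose $g-(g)_{Q_d}=(g-(g)_{Q_r})+((g)_{Q_r}-(g)_{Q_d})$: the first summand produces exactly the tail appearing in the first term on the right-hand side of \cref{snail}, while the constant difference is absorbed via \cref{av} with $\theta\sim d/r$.

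For the oscillation inside each annulus, I insert the telescoping chain
\[
|g(y,t)-(g)_{Q_d}| \leq |g(y,t)-(g)_{Q_{d_{k+1}}}| + \sum_{j=0}^{k}\bigl|(g)_{Q_{d_{j+1}}}-(g)_{Q_{d_j}}\bigr|,
\]
with each jump bounded by a constant times $(\fiint_{Q_{d_{j+1}}}|g-(g)_{Q_{d_{j+1}}}|^2)^{1/2}$ via \cref{av} with $\theta=1/2$. I then take the $L^2$-norm in $t$ over $(t_0-d^2,t_0)\subset(t_0-d_{k+1}^2,t_0)$, apply Minkowski's inequality to exchange this norm with the annular sum, and use spatial Cauchy--Schwarz on $B_{d_{k+1}}$ together with $|Q_{d_{k+1}}|/|B_{d_{k+1}}|=d_{k+1}^2$ to convert each resulting $L^1$-average into the parabolic $L^2$-average $(\fiint_{Q_{d_{k+1}}}|\cdots|^2)^{1/2}$. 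After reordering the double sum from telescoping (Fubini in the $(j,k)$-indices), the estimate collapses to a dyadic sum of the form $\sum_k 2^{-ks}(\fiint_{Q_{d_{k+1}}}|g-(g)_{Q_{d_{k+1}}}|^2)^{1/2}$, plus a boundary contribution at $k=N$ supplying the $(d/r)^s$ piece. The standard equivalence $\sum_k 2^{-ks}F(d_k) \lesssim \int_d^r (d/v)^s F(v)\,dv/v$ then delivers the continuous integral on the right-hand side.

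The main obstacle is the precise bookkeeping of exponents: the annular kernel factor $d_k^{-2s}$, the time factor from $(t_0-d^2,t_0)\subset(t_0-d_{k+1}^2,t_0)$, the normalization produced by spatial Cauchy--Schwarz, and the telescoping jumps must all align to produce exactly the $(d/v)^s$ factor inside the integral and the outer prefactor $d^{(2-\varepsilon)/2-s}$. The parameter $\varepsilon$ enters only through the outer temporal factor $d^{(2-\varepsilon)/2}$ and decouples from the spatial analysis. A subtler point is that when $1-2s$ fails to be strictly negative, the reordered double sum from the telescoping no longer converges geometrically, so the passage from the dyadic sum to the integral must be carried out uniformly in a continuous radial variable rather than by a naive geometric-series bound.
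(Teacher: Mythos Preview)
Your proposal is correct and follows essentially the same route as the paper: the paper also proves \cref{av} via the variance-minimizing property with $k=(g)_{Q_r}$, and proves \cref{snail} by splitting off the outer tail $\mathbb{R}^n\setminus B_r$ (yielding the first right-hand term plus the constant jump $|(g)_{Q_d}-(g)_{Q_r}|$), decomposing the annular region $B_r\setminus B_d$ dyadically, inserting a telescoping chain of averages, applying Minkowski and Jensen, swapping the double sum via discrete Fubini, and then converting the resulting dyadic sum $\sum_m \lambda^{ms}(\fiint_{Q_{\lambda^{-m}d}}|g-(g)|^2)^{1/2}$ into the continuous integral $\int_d^r (d/v)^s(\cdots)\,dv/v$. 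The only cosmetic difference is that the paper chooses a variable ratio $\lambda\in(1/4,1/2)$ so that $d=\lambda^\kappa r$ exactly (avoiding a remainder at $d_N$), and your worry about the sign of $1-2s$ is unnecessary: after Fubini the inner geometric series is $\sum_{k\ge m}\lambda^{2sk}$, which converges for every $s>0$.
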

\begin{proof} We consider all the balls to be centred at $x_0$. Let us now recall the following property
\begin{equation}{\label{av2}}
\left(\fiint_{Q_\varrho}\left|g-(g)_{Q_\varrho}\right|^2 d x dt\right)^{1 / 2} \leq 2\left(\fiint_{Q_\varrho}|g-k|^2 d x\right)^{1 / 2} ,   
\end{equation}
that holds for all $k \in \mathbb{R}$; from this choosing $k=(u)_{Q_r}$, we get \cref{av} i.e.
\begin{equation*}
\begin{array}{rcl}
   \quad \left(\fiint_{Q_d}\left|g(x,t)-(g)_{Q_d}\right|^2 d x d t\right)^{1/2}&\leq& 2 \theta ^{-1-n/2}\left(\fiint_{Q_r}\left|g(x,t)-(g)_{Q_r}\right|^2 d x d t\right)^{1/2}.
   \end{array}
\end{equation*}
For the proof of \cref{snail}, let $B_d \subset B_{r}$, we then split
\begin{equation}{\label{snail1}}
\begin{array}{l}
\quad \left( d^{(2-\varepsilon)}\int_{t_0-d^2}^{t_0} \left(\int_{\mathbb{R}^n \backslash B_{d }} \frac{|g(y,t)-(g)_{Q_d}|}{\left|y-x_0\right|^{n+2s }} d y \right)^2dt \right)^{1/2} \\\leq c\Bigg[ \left(\frac{d}{r}\right)^{(2-\varepsilon)/2} \left(r^{(2-\varepsilon)}\int_{t_0-r^2}^{t_0}\left(\int_{\mathbb{R}^n \backslash B_{r}} \frac{|g(y,t)-(g)_{Q_r}|}{\left|y-x_0\right|^{n+2s }} d y \right)^2dt \right)^{1/2}\\\quad+\underbrace{d^{(2-\varepsilon)/2-s}\left(\frac{d}{r}\right)^s\left|(g)_{Q_d}-(g)_{Q_r}\right|}_{T_1} 
\\\quad+\underbrace{\left(d^{(2-\varepsilon)}\int_{t_0-d^2}^{t_0}\left(\int_{B_{r} \backslash B_d} \frac{\left|g(x,t)-(g)_{Q_d}\right|}{\left|x-x_0\right|^{n+2s }} d x\right)^2dt \right)^{1 / 2}}_{T_2}\Bigg],
\end{array}
\end{equation}
where $c \equiv c(n, s)$. We have used
\begin{equation*}
d \lambda_{x_0}(\mathbb{R}^n \backslash B_d)=c d^{-2s}, \quad d \lambda_{x_0}(x):=\frac{d x}{\left|x-x_0\right|^{n+2s }}.
\end{equation*} 
 If $r / 4 \leq d<r$, then from \cref{snail1}, using \cref{av,av2} , we get that 
 \begin{equation*}
 T_1+T_2 \leq c d^{(2-\varepsilon)/2-s}\left(\frac{d}{r}\right)^s 
 \left(\fiint_{Q_r}\left|g(x,t)-(g)_{Q_r}\right|^2 d x d t\right)^{1/2}
 \end{equation*}
 holds with $c \equiv c(n, s)$. Therefore, we can assume that $d<r / 4$. That implies there exists $\lambda \in(1 / 4,1 / 2)$ and $\kappa \in \mathbb{N}, \kappa \geq 2$ so that $d=\lambda^\kappa r$. Note that $\lambda$ and $\frac{1}{\lambda}$ both are bounded.
\begin{description}[leftmargin=0cm]
    \item{\textbf{Estimate for $T_1$:}} Using triangle and H\"older's inequalities, along with \cref{av,av2} repeatedly, we estimate $T_1$ as
      \begin{equation}{\label{T_1}}
\begin{array}{l}
T_1=d^{(2-\varepsilon)/2-s}\left(\frac{d}{r}\right)^s\left|(g)_{Q_d}-(g)_{Q_r}\right| \\\quad\leq d^{(2-\varepsilon)/2-s}\left(\frac{d}{r}\right)^s\left|(g)_{Q_{\lambda r}}-(g)_{Q_{r}}\right|+d^{(2-\varepsilon)/2-s}\left(\frac{d}{r}\right)^s\left|(g)_{Q_{\lambda r}}-(g)_{Q_{\lambda^\kappa r}}\right|\\\quad\leq c d^{(2-\varepsilon)/2-s}\left(\frac{d}{r}\right)^s \left(\fiint_{Q_r}\left|g(x,t)-(g)_{Q_r}\right|^2 d x d t\right)^{1/2}+d^{(2-\varepsilon)/2-s}\left(\frac{d}{r}\right)^s \sum_{i=1}^{\kappa-1}\left|(g)_{Q_{\lambda^{i} r}}-(g)_{Q_{\lambda ^{i+1}{r}}}\right|\\
\quad \leq c d^{(2-\varepsilon)/2-s}\left(\frac{d}{r}\right)^s \left(\fiint_{Q_r}\left|g(x,t)-(g)_{Q_r}\right|^2 d x d t\right)^{1/2}\\\quad\quad+c d^{(2-\varepsilon)/2-s}\left(\frac{d}{r}\right)^s \sum_{i=1}^{\kappa}\left(\fiint_{Q_{\lambda^i r}}\left|g(x,t)-(g)_{Q_{\lambda^{i} r}}\right|^2
 d xd t\right)^{1 / 2} \\
 \quad\leq c d^{(2-\varepsilon)/2-s}\left(\frac{d}{r}\right)^s \left(\fiint_{Q_r}\left|g(x,t)-(g)_{Q_r}\right|^2 d x d t\right)^{1/2}\\\quad\quad+cd^{(2-\varepsilon)/2-s}\left(\frac{d}{r}\right)^s\sum_{i=1}^{\kappa} \int_{\lambda^i r}^{\lambda^{i-1} r} \left(\fiint_{Q_{\lambda^ir}}\left|g(x,t)-(g)_{Q_{\lambda^ir}}\right|^2 d x d t\right)^{1/2}\frac{d v}{v} 
\\ \quad\leq c d^{(2-\varepsilon)/2-s}\left(\frac{d}{r}\right)^s \left(\fiint_{Q_r}\left|g(x,t)-(g)_{Q_r}\right|^2 d x d t\right)^{1/2}\\\quad\quad+cd^{(2-\varepsilon)/2-s}\left(\frac{d}{r}\right)^s\sum_{i=1}^{\kappa} \int_{\lambda^i r}^{\lambda^{i-1} r} \left(\fiint_{Q_v}\left|g(x,t)-(g)_{Q_v}\right|^2 d x d t\right)^{1/2} \frac{d v}{v} \\
\quad\leq c d^{(2-\varepsilon)/2-s}\left(\frac{d}{r}\right)^s \left(\fiint_{Q_r}\left|g(x,t)-(g)_{Q_r}\right|^2 d x d t\right)^{1/2}\\\quad\quad+c d^{(2-\varepsilon)/2-s}\left(\frac{d}{r}\right)^s \int_d^{r} \left(\fiint_{Q_v}\left|g(x,t)-(g)_{Q_v}\right|^2 d x d t\right)^{1/2} \frac{d v}{v} \text {, } 
\end{array}    
\end{equation}
with $c \equiv c(n, s)$.
\item{\textbf{Estimate for $T_2$:}} Rewriting $r=\lambda^{-\kappa} d$ and using Jensen's inequality we get
\begin{equation}{\label{jensen}}
\begin{array}{rcl}
\left((\lambda^{-i}d)^{-2s}\int_{t_0-d^2}^{t_0}\fint_{B_{\lambda ^{-i}d}}\left|g(x,t)-(g)_{Q_d}\right|^2d x dt\right)^{1 / 2}&\leq& \left(\fint_{t_0-(\lambda^{-i}d)^2}^{t_0}\fint_{B_{\lambda ^{-i}d}}\left|g(x,t)-(g)_{Q_d}\right|^2d x dt\right)^{1 / 2}\\&\leq &\left(\fiint_{Q_{\lambda^{-i}d}}\left|g(x,t)-(g)_{Q_{\lambda^{-i}d}}\right|^2 d x d t\right)^{1/2}\smallskip\\&&+\left|(g)_{Q_{\lambda^{-i}d}}-(g)_{Q_{d}}\right|\\  &\leq& c \sum_{m=0}^i \left(\fiint_{Q_{\lambda^{-m}d}}\left|g(x,t)-(g)_{Q_{\lambda^{-m}d}}\right|^2 d x d t\right)^{1/2} ,
\end{array}
\end{equation}
for $0 \leq i \leq k$. We use \cref{av,av2,jensen} to estimate $T_2$. By the discrete Fubini theorem, we obtain
\begin{equation*}
\begin{array}{rcl} 
T_2&=&\left(d^{(2-\varepsilon)}\int_{t_0-d^2}^{t_0}\left(\int_{B_{r} \backslash B_d} \frac{\left|g(x,t)-(g)_{Q_d}\right|}{\left|x-x_0\right|^{n+2s }} d x\right)^2dt \right)^{1 / 2}\\ &\leq&  c d^{(2-\varepsilon)/2-s} \left(d^{-2s}\int_{t_0-d^2}^{t_0}\left(\sum_{i=0}^{\kappa-1}\lambda^{2i s }\left(\lambda^{-i} d\right)^{-n} \int_{B_{\lambda^{-i-1}d} \backslash B_{\lambda^{-i}d}} \mid g(x,t)-(g)_{Q_d}\mid  d x\right)^2d t\right)^{1 / 2}\\ &\leq&  c d^{(2-\varepsilon)/2-s} \left(d^{-2s}\int_{t_0-d^2}^{t_0}\left(\sum_{i=0}^{\kappa}\lambda^{2i s } \fint_{B_{\lambda^{-i}d}} \mid g(x,t)-(g)_{Q_d}\mid  d x\right)^2d t\right)^{1 / 2}.
\end{array}
\end{equation*}
Now, using Minkowski inequality, we get
\begin{equation*}
    \begin{array}{rcl}
    T_2&\leq&  c d^{(2-\varepsilon)/2-s} \sum_{i=0}^{\kappa}\left(d^{-2s}\int_{t_0-d^2}^{t_0}\left(\lambda^{2i s } \fint_{B_{\lambda^{-i}d}}\left| g(x,t)-(g)_{Q_d}\right|  d x\right)^2d t\right)^{1 / 2} \\
& \leq &c d^{(2-\varepsilon)/2-s}\sum_{i=0}^{\kappa} \left(\lambda^{2i s} (\lambda^{-i}d)^{-2s}\int_{t_0-d^2}^{t_0}\fint_{B_{\lambda^{-i}d}}\left|g(x,t)-(g)_{Q_d}\right|^2 d xd t\right)^{1 / 2}  \\
&=& c d^{(2-\varepsilon)/2-s} \sum_{i=0}^\kappa\lambda^{i s} \left((\lambda^{-i}d)^{-2s}\int_{t_0-d^2}^{t_0}\fint_{B_{\lambda^{-i}d}} \left|g(x,t)-(g)_{Q_d}\right|^2 d xd t\right)^{1 / 2} \\
& \stackrel{\cref{jensen}}{\leq}& c d^{(2-\varepsilon)/2-s} \sum_{i=0}^\kappa \lambda^{i s} \sum_{m=0}^i \left(\fiint_{Q_{\lambda^{-m}d}}\left|g(x,t)-(g)_{Q_{\lambda^{-m}d}}\right|^2 d x d t\right)^{1/2}.
 \end{array}
\end{equation*}
Now, changing the order of summation and estimating using \cref{av,av2}, we get
\begin{equation}{\label{T_2}}
    \begin{array}{l}
 T_2 
\leq c d^{(2-\varepsilon)/2-s} \sum_{m=0}^\kappa \left(\fiint_{Q_{\lambda^{-m}d}}\left|g(x,t)-(g)_{Q_{\lambda^{-m}d}}\right|^2 d x d t\right)^{1/2} \sum_{i=m}^\kappa \lambda^{i s}\\
\leq c d^{(2-\varepsilon)/2-s} \sum_{m=0}^\kappa \lambda^{m s} \left(\fiint_{Q_{\lambda^{-m}d}}\left|g(x,t)-(g)_{Q_{\lambda^{-m}d}}\right|^2 d x d t\right)^{1/2}\smallskip \\
 \leq c d^{(2-\varepsilon)/2-s} \sum_{m=0}^{\kappa-1} \int_{\lambda^{-m} d}^{\lambda^{-m-1} d} \lambda^{m s} \left(\fiint_{Q_{v}}\left|g(x,t)-(g)_{Q_{v}}\right|^2 d x d t\right)^{1/2} \frac{d v}{v}\\\quad+c d^{(2-\varepsilon)/2-s} \left(\frac{d}{r}\right)^s \left(\fiint_{Q_{r}}\left|g(x,t)-(g)_{Q_{r}}\right|^2 d x d t\right)^{1/2}\\ \leq c d^{(2-\varepsilon)/2-s} \sum_{m=0}^{\kappa-1} \int_{\lambda^{-m} d}^{\lambda^{-m-1} d} \left(\frac{d}{v}\right)^{ s} \left(\fiint_{Q_{v}}\left|g(x,t)-(g)_{Q_{v}}\right|^2 d x d t\right)^{1/2} \frac{d v}{v}\\\quad +c d^{(2-\varepsilon)/2-s}\left(\frac{d}{r}\right)^s \left(\fiint_{Q_{r}}\left|g(x,t)-(g)_{Q_{r}}\right|^2 d x d t\right)^{1/2}\\
 \leq c d^{(2-\varepsilon)/2-s}\bigg[ \int_d^{r}\left(\frac{d}{v}\right)^s \left(\fiint_{Q_{v}}\left|g(x,t)-(g)_{Q_{v}}\right|^2 d x d t\right)^{1/2} \frac{d v}{v}+\left(\frac{d}{r}\right)^s \left(\fiint_{Q_{r}}\left|g(x,t)-(g)_{Q_{r}}\right|^2 d x d t\right)^{1/2}\bigg],
 \end{array}
\end{equation}
for $c \equiv c(n, s)$.
\end{description}  Merging the estimates \cref{T_1,T_2} and using \cref{snail1} we get \cref{snail}.
\end{proof}
\section{H\"older continuity} Let $r<1$. For $d \leq r / 8$, we bound
\begin{equation}{\label{H\"older1}}
\begin{array}{l}
\quad\left(\fiint_{Q_{d}}\left|u(x,t)-(u)_{Q_{d}}\right|^2 d x d t\right)^{1/2} \\\stackrel{\cref{av2}}{\leq} c\left(\fiint_{Q_d}\left|w-(w)_{Q_d}\right|^2 d xdt\right)^{1 / 2}+c\left(\frac{r}{d}\right)^{n / 2 +1}\left(\fiint_{Q_{r / 4}}|u-w|^2 d x dt\right)^{1 / 2}
\\
 \stackrel{\cref{reguw}}{\leq}c\left(\frac{d}{r}\right)\left(\fiint_{Q_{r/8}}\left|w-(w)_{Q_{r/8}}\right|^2 d xdt\right)^{1 / 2}+c\left(\frac{r}{d}\right)^{n / 2 +1}\left(\fiint_{Q_{r / 4}}|u-w|^2 d x dt\right)^{1 / 2}
\\
 \stackrel{\cref{poin}}{\leq}c\left(\frac{d}{r}\right)r\left(\fiint_{Q_{r/8}}\left|\nabla w\right|^2 d xdt\right)^{1 / 2}+c\left(\frac{r}{d}\right)^{n / 2 +1}\left(\fiint_{Q_{r / 4}}|u-w|^2 d x dt\right)^{1 / 2}  \\{\leq}c\left(\frac{d}{r}\right)r\left(\fiint_{Q_{r/4}}\left|\nabla w\right|^2 d xdt\right)^{1 / 2}+c\left(\frac{r}{d}\right)^{n / 2 +1}\left(\fiint_{Q_{r / 4}}|u-w|^2 d x dt\right)^{1 / 2}  \\{\leq}c\left(\frac{d}{r}\right)r\bigg[\left(\fiint_{Q_{r/4}}\left|\nabla(u- w)\right|^2 d xdt\right)^{1 / 2}+\left(\fiint_{Q_{r/4}}\left|\nabla u\right|^2 d xdt\right)^{1 / 2}\bigg]+c\left(\frac{r}{d}\right)^{n / 2 +1}\left(\fiint_{Q_{r / 4}}|u-w|^2 d x dt\right)^{1 / 2} \\
\stackrel{\cref{caccfinal},\cref{DIFFERENCE}}{\leq} c \left[\left(\frac{d}{r}\right)+r^{\varepsilon/2}\left(\frac{r}{d}\right)^{n/2+1}\right]\bigg[\left(r^{(2-\varepsilon)}\int_{t_0-r^2}^{t_0}\left( \int_{\mathbb{R}^n \backslash B_{r }} \frac{|u(y,t)-(u)_{Q_r}|}{\left|y-x_0\right|^{n+2s }} d y\right)^2 dt\right)^{1/2}\\ \quad+\left(\fiint_{Q_r}\left|u(x,t)-(u)_{Q_r}\right|^2 d x d t\right)^{1/2}\bigg],
\end{array}
\end{equation}
with $c \equiv c(n,s)$; we have used that $r^2<r^{2-\varepsilon}$, for $r<1$ in order to use \cref{caccfinal}. The same inequality \cref{H\"older1} holds for $r / 8 \leq d \leq r$ by \cref{av}. Taking $d \equiv \tau r$ for $\tau \in(0,1 / 8)$, we get in particular
\begin{equation}{\label{H\"older2}}
\begin{array}{rcl}
     \left(\fiint_{Q_{\tau r}}\left|u(x,t)-(u)_{Q_{\tau r}}\right|^2 d x d t\right)^{1/2} &\leq &c\left(\tau+r^{ \varepsilon/ 2} \tau^{-n / 2-1}\right) \bigg[\left(\fiint_{Q_r}\left|u(x,t)-(u)_{Q_r}\right|^2 d x d t\right)^{1/2}\\ &&+\left(r^{(2-\varepsilon)}\int_{t_0-r^2}^{t_0}\left( \int_{\mathbb{R}^n \backslash B_{r }} \frac{|u(y,t)-(u)_{Q_r}|}{\left|y-x_0\right|^{n+2s }} d y\right)^2 dt\right)^{1/2}\bigg],
\end{array}
\end{equation}
with $c \equiv c(n,s)$. In order to get a full decay estimate, we need to evaluate the nonlocal term. For this we use \cref{snail}, that yields
\begin{equation}{\label{H\"older3}}
    \begin{array}{l}
       \quad\left((\tau r)^{(2-\varepsilon)}\int_{t_0-(\tau r)^2}^{t_0}\left( \int_{\mathbb{R}^n \backslash B_{\tau r }} \frac{|u(y,t)-(u)_{Q_{\tau r}}|}{\left|y-x_0\right|^{n+2s }} d y\right)^2 dt\right) \\ \leq c \underbrace{\tau^{(2-\varepsilon)}\left(r^{(2-\varepsilon)}\int_{t_0-r^2}^{t_0}\left( \int_{\mathbb{R}^n \backslash B_{r }} \frac{|u(y,t)-(u)_{Q_r}|}{\left|y-x_0\right|^{n+2s }} d y\right)^2 dt\right)}_{S_1}\\\quad+c\underbrace{(\tau r)^{(2-\varepsilon)}\left(\int_{\tau r}^r\left(\fiint_{Q_v}\left|u(x,t)-(u)_{Q_v}\right|^2 d x d t\right)^{1/2}\frac{dv}{v^{1+s}}\right)^2}_{S_2}+c\underbrace{(\tau r)^{(2-\varepsilon)}r^{-2s}\left(\fiint_{Q_r}\left|u(x,t)-(u)_{Q_r}\right|^2 d x d t\right)}_{S_3}.
    \end{array}
\end{equation}
\begin{description}[leftmargin=0cm]
    \item{\textbf{Estimate for $S_1$:}} Clearly 
    \begin{equation}{\label{snail1st}}
        \begin{array}{l}
             S_1\leq \tau ^{(2-\varepsilon)}\bigg[\left(r^{(2-\varepsilon)}\int_{t_0-r^2}^{t_0}\left( \int_{\mathbb{R}^n \backslash B_{r }} \frac{|u(y,t)-(u)_{Q_r}|}{\left|y-x_0\right|^{n+2s }} d y\right)^2 dt\right)+ \fiint_{Q_r}\left|u(x,t)-(u)_{Q_r}\right|^2 d x d t\bigg].
        \end{array}
    \end{equation}
    \item{\textbf{Estimate for $S_2$:}} Note that $r<1$ and $\varepsilon <1-s$ implies $\varepsilon<2-2s$. By \cref{H\"older1} (this holds for $r/8\leq d\leq r$ by \cref{av}), and Young's inequality, we get
    \begin{equation}{\label{snail2nd}}
        \begin{array}{l}
              S_2 \leq c \Bigg[\tau^{2-\varepsilon}r^{2-\varepsilon-2}\left(\int_{\tau r}^{r} \frac{d v}{v^{1+s-1}}\right)^2 
 + \tau^{2-\varepsilon}r^{2-\varepsilon+\varepsilon+n+ 2 }\left(\int_{\tau r}^{r} \frac{d v}{v^{2+s+n/2}}\right)^2\Bigg]\Bigg[ \fiint_{Q_r}\left|u(x,t)-(u)_{Q_r}\right|^2 d x d t\\\qquad\qquad\qquad\qquad\qquad\qquad\qquad\qquad\qquad\qquad+\left(r^{(2-\varepsilon)}\int_{t_0-r^2}^{t_0}\left( \int_{\mathbb{R}^n \backslash B_{r }} \frac{|u(y,t)-(u)_{Q_r}|}{\left|y-x_0\right|^{n+2s }} d y\right)^2 dt\right)\quad\Bigg]\\\leq c \Bigg[ \tau^{2-\varepsilon}r^{2-\varepsilon-2s}+ \tau^{-2s-n -2} r^{2-2s}\Bigg]\Bigg[\left(r^{(2-\varepsilon)}\int_{t_0-r^2}^{t_0}\left( \int_{\mathbb{R}^n \backslash B_{r }} \frac{|u(y,t)-(u)_{Q_r}|}{\left|y-x_0\right|^{n+2s }} d y\right)^2 dt\right)\\\qquad\qquad\qquad\qquad\qquad\qquad\qquad\qquad
 + \fiint_{Q_r}\left|u(x,t)-(u)_{Q_r}\right|^2 d x d t\quad\Bigg]\\\leq c \Bigg[ \tau^{2-\varepsilon}+ \tau^{-2s-n -2} r^{\varepsilon}\Bigg]\Bigg[\left(r^{(2-\varepsilon)}\int_{t_0-r^2}^{t_0}\left( \int_{\mathbb{R}^n \backslash B_{r }} \frac{|u(y,t)-(u)_{Q_r}|}{\left|y-x_0\right|^{n+2s }} d y\right)^2 dt\right)+ \fiint_{Q_r}\left|u(x,t)-(u)_{Q_r}\right|^2 d x d t\quad\Bigg],
\end{array}
\end{equation}
where $c \equiv c( n,s)$.
\item{\textbf{Estimate for $S_3$:}} As the same argument of $S_2$, it follows that
\begin{equation}{\label{snail3rd}}
    \begin{array}{l}
             S_3\leq \tau ^{(2-\varepsilon)}\bigg[\left(r^{(2-\varepsilon)}\int_{t_0-r^2}^{t_0}\left( \int_{\mathbb{R}^n \backslash B_{r }} \frac{|u(y,t)-(u)_{Q_r}|}{\left|y-x_0\right|^{n+2s }} d y\right)^2 dt\right)+ \fiint_{Q_r}\left|u(x,t)-(u)_{Q_r}\right|^2 d x d t\bigg].
        \end{array}
    \end{equation}
    %where $c \equiv c( n,s)$.
\end{description}
Combining \cref{snail1st,snail2nd,snail3rd}, by \cref{H\"older3} we get
\begin{equation}{\label{H\"older4}}
    \begin{array}{l}
        \quad (\tau r)^{(2-\varepsilon)}\int_{t_0-(\tau r)^2}^{t_0}\left( \int_{\mathbb{R}^n \backslash B_{\tau r }} \frac{|u(y,t)-(u)_{Q_{\tau r}}|}{\left|y-x_0\right|^{n+2s }} d y\right)^2 dt\\\leq c[\tau^{2-\varepsilon}+r^{\varepsilon}\tau^{-2s-n-2}]\times\bigg[r^{(2-\varepsilon)}\int_{t_0-r^2}^{t_0}\left( \int_{\mathbb{R}^n \backslash B_{r }} \frac{|u(y,t)-(u)_{Q_r}|}{\left|y-x_0\right|^{n+2s }} d y\right)^2 dt \smallskip+ \fiint_{Q_r}\left|u(x,t)-(u)_{Q_r}\right|^2 d x d t\bigg].
    \end{array}
\end{equation}
Merging \cref{H\"older2,H\"older4}, we thus get
\begin{equation}{\label{H\"oldermain}}
\begin{array}{l}
       \Bigg[\left((\tau r)^{(2-\varepsilon)}\int_{t_0-(\tau r)^2}^{t_0}\left( \int_{\mathbb{R}^n \backslash B_{\tau r }} \frac{|u(y,t)-(u)_{Q_{\tau r}}|}{\left|y-x_0\right|^{n+2s }} d y\right)^2 dt\right)^{1/2}+ \left(\fiint_{Q_{\tau r}}\left|u(x,t)-(u)_{Q_{\tau r}}\right|^2 d x d t\right)^{1/2}\Bigg]\\\leq c[\tau^{(2-\varepsilon)/2}+r^{ \varepsilon/ 2} \tau^{-s-n / 2-1}] \bigg[\left(r^{(2-\varepsilon)}\int_{t_0-r^2}^{t_0}\left( \int_{\mathbb{R}^n \backslash B_{r }} \frac{|u(y,t)-(u)_{Q_r}|}{\left|y-x_0\right|^{n+2s }} d y\right)^2 dt\right)^{1/2}\\\quad\qquad\qquad\quad\qquad\qquad\quad\qquad\qquad+\left(\fiint_{Q_r}\left|u(x,t)-(u)_{Q_r}\right|^2 d x d t\right)^{1/2}\bigg].
\end{array}
\end{equation}
Now fix $\alpha_0 \in (0,1)$, choose $\alpha$ such that $\alpha_0<\alpha<1
$
and take $\alpha_1=(1+\alpha)/2$.
We eventually determine $\tau \equiv \tau(n,s, \alpha) \leq 1 / 8$ such that
\begin{equation}{\label{tau}}
\begin{array}{l}
2 c \tau^{(2-\varepsilon)/ 2}\tau^{-(2-\varepsilon)\alpha_1/2}  \leq 1 \qquad\text{ and }\qquad
\tau^{(2-\varepsilon)(1-\alpha) / 4} \leq \frac{1}{2}.
\end{array}    
\end{equation}
As $\tau$ is fixed, we find $r_* \equiv r_*(n,s ,\alpha)\in(0,1)$ such that if $r \leq \varrho \leq r_*$, then $2 c r^{ \varepsilon/ 2} \tau^{-s-n / 2-1 -(2-\varepsilon)\alpha_1/2} \leq 1$. So \cref{H\"oldermain} gives
\begin{equation}{\label{H\"oldermain2}}
\begin{array}{l}
   \quad    \Bigg[\left((\tau r)^{(2-\varepsilon)}\int_{t_0-(\tau r)^2}^{t_0}\left( \int_{\mathbb{R}^n \backslash B_{\tau r }} \frac{|u(y,t)-(u)_{Q_{\tau r}}|}{\left|y-x_0\right|^{n+2s }} d y\right)^2 dt\right)^{1/2}+ \left(\fiint_{Q_{\tau r}}\left|u(x,t)-(u)_{Q_{\tau r}}\right|^2 d x d t\right)^{1/2}\Bigg]\\\leq \tau^{\frac{(2-\varepsilon)\alpha_1}{2}} \bigg[\left(r^{(2-\varepsilon)}\int_{t_0-r^2}^{t_0}\left( \int_{\mathbb{R}^n \backslash B_{r }} \frac{|u(y,t)-(u)_{Q_r}|}{\left|y-x_0\right|^{n+2s }} d y\right)^2 dt\right)^{1/2}+\left(\fiint_{Q_r}\left|u(x,t)-(u)_{Q_r}\right|^2 d x d t\right)^{1/2}\bigg],
\end{array}
\end{equation}
which holds whenever $r\leq\varrho  \leq r_*<1$. Now we introduce the sharp fractional maximal type operator
\begin{equation}{\label{maxi}}
\begin{array}{rcl}
\mathrm{M}(x_0,r)&:=&\sup _{ v \leq r} v^{-\frac{(2-\varepsilon)\alpha}{2}} \bigg[\left(v^{(2-\varepsilon)}\int_{t_0-v^2}^{t_0}\left( \int_{\mathbb{R}^n \backslash B_{v }} \frac{|u(y,t)-(u)_{Q_v}|}{\left|y-x_0\right|^{n+2s }} d y\right)^2 dt\right)^{1/2}\\&&\qquad\qquad\qquad\qquad+\left(\fiint_{Q_v}\left|u(x,t)-(u)_{Q_v}\right|^2 d x d t\right)^{1/2}\bigg]
\end{array}
\end{equation}
and its truncated version for $0<\delta<\frac{1}{2}$,
\begin{equation}{\label{maximal}}
\begin{array}{rcl}
\mathrm{M}_{\delta}(x_0, r)&:=&\sup _{\delta r \leq v \leq r} v^{-\frac{(2-\varepsilon)\alpha}{2}} \bigg[\left(v^{(2-\varepsilon)}\int_{t_0-v^2}^{t_0}\left( \int_{\mathbb{R}^n \backslash B_{v }} \frac{|u(y,t)-(u)_{Q_v}|}{\left|y-x_0\right|^{n+2s }} d y\right)^2 dt\right)^{1/2}\\&&\qquad\qquad\qquad\qquad+\left(\fiint_{Q_v}\left|u(x,t)-(u)_{Q_v}\right|^2 d x d t\right)^{1/2}\bigg].
\end{array}
\end{equation}
Multiplying both sides of \cref{H\"oldermain2} by $(\tau r)^{-(2-\varepsilon)\alpha/2}$ and taking the sup with respect to $r \in(\delta\varrho, \varrho)$, one gets
\begin{equation*}
\begin{array}{rcl}
\mathrm{M}_{\delta}(x_0, \tau \varrho) & \leq &\tau^{(2-\varepsilon)(1-\alpha) / 4} \sup _{\delta \varrho \leq v \leq \varrho}  v^{-\frac{(2-\varepsilon)\alpha}{2}} \bigg[\left(v^{(2-\varepsilon)}\int_{t_0-v^2}^{t_0}\left( \int_{\mathbb{R}^n \backslash B_{v }} \frac{|u(y,t)-(u)_{Q_v}|}{\left|y-x_0\right|^{n+2s }} d y\right)^2 dt\right)^{1/2}\\&&\quad\qquad\qquad\quad\qquad\qquad\quad\qquad\quad+\left(\fiint_{Q_v}\left|u(x,t)-(u)_{Q_v}\right|^2 d x d t\right)^{1/2}\bigg] \\
& \leq& \tau^{(2-\varepsilon)(1-\alpha) / 4} \sup _{\delta \tau \varrho \leq v \leq \varrho}  v^{-\frac{(2-\varepsilon)\alpha}{2}} \bigg[\left(v^{(2-\varepsilon)}\int_{t_0-v^2}^{t_0}\left( \int_{\mathbb{R}^n \backslash B_{v }} \frac{|u(y,t)-(u)_{Q_v}|}{\left|y-x_0\right|^{n+2s }} d y\right)^2 dt\right)^{1/2}\\&&\quad\quad\qquad\qquad\quad\qquad\qquad\quad\qquad+\left(\fiint_{Q_v}\left|u(x,t)-(u)_{Q_v}\right|^2 d x d t\right)^{1/2}\bigg] \\
& \stackrel{\cref{tau}}{\leq} & \frac{1}{2} \mathrm{M}_{\delta}(x_0, \tau \varrho
)+\sup _{\tau \varrho \leq v \leq \varrho}  v^{-\frac{(2-\varepsilon)\alpha}{2}} \bigg[\left(v^{(2-\varepsilon)}\int_{t_0-v^2}^{t_0}\left( \int_{\mathbb{R}^n \backslash B_{v }} \frac{|u(y,t)-(u)_{Q_v}|}{\left|y-x_0\right|^{n+2s }} d y\right)^2 dt\right)^{1/2}\\&&\quad\qquad\qquad\quad\qquad\qquad\quad\qquad\qquad+\left(\fiint_{Q_v}\left|u(x,t)-(u)_{Q_v}\right|^2 d x d t\right)^{1/2}\bigg].
\end{array}
\end{equation*}
The above along with the fact that $\mathrm{M}_{\delta}$ is always finite and $\tau \equiv \tau(n,s, \alpha)$, implies
\begin{equation*}
\begin{array}{rcl}
\mathrm{M}_{\delta}(x_0, \varrho)& \leq& \frac{c}{\varrho^{\frac{(2-\varepsilon)\alpha}{2}} }\sup _{\tau \varrho \leq v \leq \varrho} \bigg[\left(v^{(2-\varepsilon)}\int_{t_0-v^2}^{t_0}\left( \int_{\mathbb{R}^n \backslash B_{v }} \frac{|u(y,t)-(u)_{Q_v}|}{\left|y-x_0\right|^{n+2s }} d y\right)^2 dt\right)^{1/2}\\&&\quad\qquad\qquad\quad\qquad\qquad\quad+\left(\fiint_{Q_v}\left|u(x,t)-(u)_{Q_v}\right|^2 d x d t\right)^{1/2}\bigg],
\end{array}
\end{equation*}
with $c\equiv c(n,s,\alpha)$.
Thus $\delta \rightarrow 0$ yields
\begin{equation*}
\begin{array}{rcl}
\mathrm{M}(x_0, \varrho)& \leq& \frac{c}{\varrho^{\frac{(2-\varepsilon)\alpha}{2}} }\sup _{\tau \varrho \leq v \leq \varrho} \bigg[\left(v^{(2-\varepsilon)}\int_{t_0-v^2}^{t_0}\left( \int_{\mathbb{R}^n \backslash B_{v }} \frac{|u(y,t)-(u)_{Q_v}|}{\left|y-x_0\right|^{n+2s }} d y\right)^2 dt\right)^{1/2}\\&&\quad\qquad\qquad\quad\qquad\qquad\quad+\left(\fiint_{Q_v}\left|u(x,t)-(u)_{Q_v}\right|^2 d x d t\right)^{1/2}\bigg],
\end{array}
\end{equation*}
with $c\equiv c(n,s ,\alpha)$. In order to estimate the right hand side of the last inequality, we use \cref{av,snail} to get, whenever $0< r \leq \varrho \leq r_*$,
\begin{equation*}
\begin{array}{rcl}
\mathrm{M}(x_0, \varrho)& \leq &\frac{c}{\varrho^{\frac{(2-\varepsilon)\alpha}{2}} } \bigg[\left(\varrho^{(2-\varepsilon)}\int_{t_0-\varrho^2}^{t_0}\left(\int_{\mathbb{R}^n \backslash B_{\varrho }} \frac{|u(y,t)-(u)_{Q_\varrho}|}{\left|y-x_0\right|^{n+2s }} d y\right)^2 dt\right)^{1/2}\\&&\qquad\qquad\qquad\qquad\qquad+\left(\fiint_{Q_\varrho}\left|u(x,t)-(u)_{Q_\varrho}\right|^2 d x d t\right)^{1/2}\bigg].
\end{array}
\end{equation*}
Now by \cref{maxi},
\begin{equation}{\label{H\"olderfinal}}
\begin{array}{l}
   \quad  \bigg[\left(r^{(2-\varepsilon)}\int_{t_0-r^2}^{t_0}\left( \int_{\mathbb{R}^n \backslash B_{r }} \frac{|u(y,t)-(u)_{Q_r}|}{\left|y-x_0\right|^{n+2s }} d y\right)^2 dt\right)^{1/2}+\left(\fiint_{Q_r}\left|u(x,t)-(u)_{Q_r}\right|^2 d x d t\right)^{1/2}\bigg]\\\leq c\left(\frac{r}{\varrho}\right)^{\frac{(2-\varepsilon)\alpha}{2}}\bigg[\left(\varrho^{(2-\varepsilon)}\int_{t_0-\varrho^2}^{t_0}\left(\int_{\mathbb{R}^n \backslash B_{\varrho }} \frac{|u(y,t)-(u)_{Q_\varrho}|}{\left|y-x_0\right|^{n+2s }} d y \right)^2dt\right)^{1/2}+\left(\fiint_{Q_\varrho}\left|u(x,t)-(u)_{Q_\varrho}\right|^2 d x d t\right)^{1/2}\bigg],
\end{array}
\end{equation}
for every $0< r \leq \varrho \leq r_*$, where $c\equiv 
 c(n,s, \alpha)$. 
 Now in order to recover $\alpha_0$, we first choose $0<\varepsilon<\operatorname{min}\{1-\alpha_0,1-s\}$. Now we choose $\alpha=\frac{2\alpha_0}{(2-\varepsilon)}$. Clearly $\alpha_0<\alpha<1.$ With such choices, \cref{H\"olderfinal} becomes 
\begin{equation*}
\begin{array}{l}
  \quad   \bigg[\left(r^{(2-\varepsilon)}\int_{t_0-r^2}^{t_0}\left( \int_{\mathbb{R}^n \backslash B_{r }} \frac{|u(y,t)-(u)_{Q_r}|}{\left|y-x_0\right|^{n+2s }} d y\right)^2 dt\right)^{1/2}+\left(\fiint_{Q_r}\left|u(x,t)-(u)_{Q_r}\right|^2 d x d t\right)^{1/2}\bigg]\\\leq c\left(\frac{r}{r_*}\right)^{\alpha_0}\bigg[\left(r_*^{(2-\varepsilon)}\int_{t_0-r_*^2}^{t_0}\left(\int_{\mathbb{R}^n \backslash B_{r_* }} \frac{|u(y,t)-(u)_{Q_{r_*}}|}{\left|y-x_0\right|^{n+2s }} d y\right)^2 dt\right)^{1/2}\\\quad+\left(\fiint_{Q_{r_*}}\left|u(x,t)-(u)_{Q_{r_*}}\right|^2 d x d t\right)^{1/2}\bigg],
     \end{array}
     \end{equation*}
     which implies
     \begin{equation}{\label{H\"olderfinal2}}
     \begin{array}{l}
    \fiint_{Q_r}\left|u(x,t)-(u)_{Q_r}\right|^2 d x d t\leq c\left(\frac{r}{r_*}\right)^{2\alpha_0}\bigg[r_*^{(2-\varepsilon)}\int_{t_0-r_*^2}^{t_0}\left(\int_{\mathbb{R}^n \backslash B_{r_* }} \frac{|u(y,t)-(u)_{Q_{r_*}}|}{\left|y-x_0\right|^{n+2s }} d y\right)^2 dt\\\qquad\qquad\qquad\qquad\qquad\qquad\qquad\qquad\qquad\qquad\quad+\fiint_{Q_{r_*}}\left|u(x,t)-(u)_{Q_{r_*}}\right|^2 d x d t\bigg],
\end{array}
\end{equation}
where $c\equiv 
 c(n,s, \alpha_0)$. Further estimating using \cref{av2} %and \cref{Ming}, 
 we get that
\begin{equation}{\label{tail2}}
\begin{array}{l}
 \quad\bigg[r_*^{2-\varepsilon}\int_{t_0-r_*^2}^{t_0}\left(\int_{\mathbb{R}^n \backslash B_{r_* }} \frac{|u(y,t)-(u)_{Q_{r_*}}|}{\left|y-x_0\right|^{n+2s }} d y\right)^2 dt+\fiint_{Q_{r_*}}\left|u(x,t)-(u)_{Q_{r_*}}\right|^2 d x d t\bigg]\\\leq c\Bigg[2r_*^{2-\varepsilon}r_*^{2}\fint_{t_0-r_*^2}^{t_0}\left(\int_{\mathbb{R}^n \backslash B_{r_* }} \frac{|u(y,t)|}{\left|y-x_0\right|^{n+2s }} d y\right)^2 dt+2r_*^{2-2s-\varepsilon}r_*^{2-2s}\left(\fiint_{Q_{r_*}}\left|u(x,t)\right| d x d t\right)^2\\\quad+\fiint_{Q_{r_*}}\left|u(x,t)\right|^2 d x d t\Bigg]\\\leq
 \bigg[%r_*^{2-\varepsilon}r_*^{2-2s}\fint_{t_0-r_*^2}^{t_0}\int_{\mathbb{R}^n \backslash B_{r_* }} \frac{|u(y,t)-(u)_{Q_{r_*}}|^2}{\left|y-x_0\right|^{n+2s }} d y dt
 r_*^{2-2s-\varepsilon}r_*^{2-2s}\underset{t_0-r_*^2<t<t_0 }{\operatorname{ess} \sup}\left( r_*^{2s}\int_{\mathbb{R}^n \backslash B_{r_*}} \frac{|u(y, t)|}{\left|y-x_0\right|^{n+2s}} d y\right)^2+\fiint_{Q_{r_*}}\left|u(x,t)\right|^2 d x d t\bigg]\\
\leq \frac{c}{r_*^{n+2}}\bigg[%r_*^{2-2s-\varepsilon}r_*^{2-2s}\left(r_*^{2s}\fint_{t_0-r_*^2}^{t_0}\int_{\mathbb{R}^n \backslash B_{r_* }} \frac{|u(y,t)|^2}{\left|y-x_0\right|^{n+2s }} d y dt\right)
\left(\operatorname{Tail}_{\infty}(u;x_0,r_*,t_0-r_*^2,t_0)\right)^2+||u||^2_{L^2\left(0,T;L^2(\Omega)\right)}\bigg],
\end{array}
 \end{equation}
where $c\equiv 
 c(n,s)$. In order to get the last two lines, we have used the fact that $\varepsilon<1-s<2-2s<2$ and $r_*\in(0,1)$. Now by \cref{H\"olderfinal2} and \cref{tail2} we get
 \begin{equation}{\label{H\"olderfinal3}}
\begin{array}{l}
\fiint_{Q_r}\left|u(x,t)-(u)_{Q_r}\right|^2 d x d t\leq c\left(\frac{r}{r_*}\right)^{2\alpha_0} \frac{1}{r_*^{n+2}}\bigg[\left(\operatorname{Tail}_{\infty}(u;x_0,r_*,t_0-r_*^2,t_0)\right)^2+||u||^2_{L^2\left(0,T;L^2(\Omega)\right)}%\left(\underset{t_0-r_*^2<t<t_0}{\operatorname{ess}\text{sup}}r_*^{2s}\int_{\mathbb{R}^n \backslash B_{r_* }} \frac{|u(y,t)|^2}{\left|y-x_0\right|^{n+2s }} d y dt\right)+||u||_{L^2\left(0,T;L^2(\Omega)\right)}
\bigg],
\end{array}
\end{equation}
where $c\equiv 
 c(n,s, \alpha_0)$.
 Since $\operatorname{Tail}_{\infty}(u ; x_0, r, t_0-r^2,t_0)$, is finite for each $B_r\Subset\Omega$ and $(t_0-r^2,t_0)\Subset(0,T)$, for our solution $u$, so the right hand side of \cref{H\"olderfinal3} is finite. Therefore we get by \cref{H\"olderfinal3} that for every $\alpha_0\in(0,1)$, there exists $r_*\equiv r_*(n,s,\alpha_0)\in (0,1)$ and $c \equiv c(n,s,\alpha_0,||u||_{L^2\left(0,T;L^2(\Omega)\right)},||u||_{\operatorname{Tail}_{\infty}(u;x_0,t_0,r_*)}
 )\equiv c(\operatorname{data}_h(\alpha_0),\alpha_0)\geq 1 $ such that
\begin{equation}{\label{continuity}}
    \fint_{t_0-r^2}^{t_0} \fint_{B_r\left(x_0\right)}\left|u(x,t)-(u)_{B_r\times(t_0-r^2,t_0)}\right|^2 d x dt\leq c\left(\frac{r}{r_*}\right)^{2\alpha_0}
\end{equation}
holds whenever $B_r\Subset \Omega$, $(t_0-r^2,t_0)\Subset (0,T)$ and $r\leq r^*$.
This gives us the H\"older continuity by the Campanato-Meyers integral characterization, for all $\alpha_0\in(0,1)$. For details, one can see \cite{giusti2003direct}. So $u$ is locally Hölder continuous for all exponent in $(0,1).$
Now using the H\"older continuity we prove that $\nabla u$ is also in $C^{\alpha}$ for some $\alpha\in(0,1)$.
\section{Gradient H\"older regularity}
Throughout this section, we consider arbitrary open subsets $\Omega_0 \Subset \Omega_1 \Subset \Omega$ and open intervals $(\tau_1,\tau_2)\Subset (t_1,t_2)\Subset(0,T)$, and denote $d:=\min \left\{\operatorname{dist}\left(\Omega_0, \Omega_1\right),
\operatorname{dist}\left(\Omega_1, \Omega\right),\operatorname{dist}\left((t_1,t_2), (0,T)\right),\operatorname{dist}\left((\tau_1,\tau_2), (t_1,t_2)\right), 1\right\}$ (as the distance between two subsets of a topological space is a nonnegative number and here we are comparing those distances, $d$ is well defined). We take $B_{r} \equiv B_{r}(x_0) \Subset \Omega_1$ with $x_0 \in \Omega_0$; $(t_0-r^2,t_0)\Subset(t_1,t_2)$ with $t_0\in (\tau_1,\tau_2)$; and $0<r \leq d / 4$ and all the balls will be centred at $x_0$. Moreover, $\beta, \lambda$ will be real numbers satisfying $s<\beta<1$ and $\lambda>0$, and their precise values will be prescribed later depending on the context. We will use H\"older continuity in the form $\|u\|_{C^{0, \beta}\left(\Omega_1\times (t_1,t_2)\right)} \leq c \equiv c(\operatorname{data}_h(\beta), \beta, d)$, for each $\beta<1$.
\begin{lemma} 
If $s<\beta<1$, then
\begin{equation}{\label{decay1}}
\fint_{t_0-r^2/4}^{t_0}\int_{B_{r/ 2}} \fint_{B_{r / 2}} \frac{|u(x,t)-u(y,t)|^2}{|x-y|^{n+2s}} d x  d yd t \leq c r^{2(\beta-s)}    
\end{equation}
holds with $c \equiv c(\operatorname{data}_h(\beta), d, \beta)$.\\
The inequality
\begin{equation}{\label{decay2}}
\int_{t_0-v^2}^{t_0}\left(\int_{\mathbb{R}^n \backslash B_{v}} \frac{|u(y,t)-(u)_{Q_{v}}|}{\left|y-x_0\right|^{n+2s }} d y\right)^2 dt %\leq c v^{2-2s}\fint_{t_0-v^2}^{t_0}\int_{\mathbb{R}^n \backslash B_{v}} \frac{|u(y,t)-(u)_{Q_{v}}|^2}{\left|y-x_0\right|^{n+2s }} d y dt
\leq c  v^{2-2s}  
\end{equation}
holds whenever $0<v \leq r$, where $c \equiv c(\operatorname{data}_h, d)$.\\
If $\lambda>0$ then
\begin{equation}{\label{decay3}}
\fint_{t_0-r^2/4}^{t_0}\fint_{B_{r / 2}}|\nabla u|^2 d x dt\leq c r^{-2 \lambda}
\end{equation}
holds with $c \equiv c(\operatorname{data}_h(\lambda), d, \lambda)$.
\end{lemma}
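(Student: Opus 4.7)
The three estimates will be proved by combining the parabolic Hölder continuity $\|u\|_{C^{0,\beta}(\Omega_1\times(t_1,t_2))}\leq c(\operatorname{data}_h(\beta),\beta,d)$ for any $\beta\in(0,1)$ (available from the previous section via the Campanato--Meyers characterization) with the Caccioppoli inequality \cref{caccfinal}. For \eqref{decay1}, I would fix $\beta\in(s,1)$ and use $|u(x,t)-u(y,t)|\leq c|x-y|^\beta$ pointwise in $t$, so that the integrand is bounded by $|x-y|^{2\beta-n-2s}$. Since $2\beta-2s>0$, this is integrable near the diagonal; a polar integration over $B_{r/2}$ produces the factor $r^{2\beta-2s}=r^{2(\beta-s)}$, and averaging in $t$ preserves the scaling. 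This step is routine.

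For \eqref{decay2} I would choose any $\beta\in(s,1)$ and split the tail at the fixed scale $d$ into a near part $\mathcal{N}:=\int_{B_d(x_0)\setminus B_v(x_0)}\cdots\,dy$ and a far part $\mathcal{F}:=\int_{\mathbb{R}^n\setminus B_d(x_0)}\cdots\,dy$. On the near part, since $B_d(x_0)\subset\Omega_1$ and $Q_v\subset\Omega_1\times(t_1,t_2)$, the parabolic Hölder bound gives
$$|u(y,t)-(u)_{Q_v}|\leq c(|y-x_0|+v)^\beta\leq c|y-x_0|^\beta$$
for $t\in(t_0-v^2,t_0)$ and $y\in B_d(x_0)\setminus B_v(x_0)$ (using $|y-x_0|\geq v$); a polar integration then yields $\mathcal{N}\leq c(v^{\beta-2s}+1)$. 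On the far part, the finiteness of $\operatorname{Tail}_\infty(u;x_0,d,\cdot)$ together with $|(u)_{Q_v}|\leq \|u\|_{L^\infty(\Omega_1\times(t_1,t_2))}\leq c$ and $\int_{\mathbb{R}^n\setminus B_d(x_0)}|y-x_0|^{-n-2s}\,dy\leq c\,d^{-2s}$ give $\mathcal{F}\leq c$ uniformly in $t$. Squaring and integrating over the time interval of length $v^2$, the near contribution becomes $c\,v^{2+2(\beta-2s)}=c\,v^{2-2s}\cdot v^{2(\beta-s)}\leq c\,v^{2-2s}$ (using $\beta>s$ and $v\leq 1$), while the far contribution becomes $c\,v^2\leq c\,v^{2-2s}$.

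For \eqref{decay3} I would apply \cref{caccfinal} at radius $r$: the non-local tail on the right is controlled by \eqref{decay2} and gives $c\,r^{2-2s}$, while the two local terms use Hölder continuity via $|u-(u)_{Q_r}|^2\leq c\,r^{2\beta}$ to produce $c\,r^{2\beta-2s}$ and $c\,r^{2\beta-2}$. For $r\leq 1$ the dominant (and only negative-exponent) term is $c\,r^{-2(1-\beta)}$; given arbitrary $\lambda>0$, one takes $\beta\in(s,1)$ with $1-\beta\leq\lambda$ whenever $\lambda<1-s$, and any $\beta\in(s,1)$ otherwise, in which case $r^{-2(1-\beta)}\leq r^{-2\lambda}$ automatically since $r\leq 1$. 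The main obstacle is the balancing of scales in \eqref{decay2}: one must take the splitting radius within the Hölder continuity domain $\Omega_1\times(t_1,t_2)$ and, crucially, exploit the strict inequality $\beta>s$ to absorb the factor $v^{2(\beta-s)}\leq 1$ so that the squared non-local integral scales exactly like $v^{2-2s}$. Everything else reduces to a careful but routine bookkeeping of powers of $r$ and $v$.
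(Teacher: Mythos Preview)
Your proposal is correct and follows essentially the same strategy as the paper for all three estimates. The only notable difference is in \eqref{decay2}: the paper splits the tail into four pieces, inserting the intermediaries $(u)_{Q_d}$ and $u(x_0,t)$ and invoking the telescoping bound $|(u)_{Q_d}-(u)_{Q_v}|\leq c\,d^{\beta}$ from the proof of \cref{decay}, whereas you make a cleaner two-piece near/far split and bound $|u(y,t)-(u)_{Q_v}|$ directly via parabolic H\"older continuity (and $|(u)_{Q_v}|\leq\|u\|_{L^\infty}$ on the far part). Your route is slightly more economical; the paper's is slightly more self-contained in that it only uses the oscillation bound \cref{continuity} rather than pointwise H\"older continuity, but both yield the same $v^{2-2s}$ scaling by the same mechanism.
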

\begin{proof}
    Estimate in \cref{decay1} follows from H\"older continuity with $\alpha_0 \equiv \beta$. To prove \cref{decay2} we notice from the proof of \cref{decay} (the part estimate of $T_1$), that 
    \begin{equation}{\label{oneresult}}
    \begin{array}{rcl}
        |(u)_{Q_{d}}-(u)_{Q_{v}}|^2&\leq& c\Bigg[\int_v^d \left(\fiint_{Q_\varrho}\left|u(x,t)-(u)_{Q_\varrho}\right|^2 d x d t \right)^{1/2}\frac{d \varrho}{\varrho} 
+ \left(\fiint_{Q_d}\left|u(x,t)-(u)_{Q_d}\right|^2 d x d t\right)^{1/2}\Bigg]^2\\&\leq&c\left(\int_v^d \varrho^{\beta-1}d\varrho+d^{\beta}\right)^2\leq c d^{2\beta}, \end{array}
    \end{equation}
    where $c \equiv c(\operatorname{data}_h(\beta), d, \beta)$.
    Now using %\cref{Ming} and 
    \cref{oneresult} we estimate as follows:
\begin{equation*}
\begin{array}{l}
\quad\int_{t_0-v^2}^{t_0}\left(\int_{\mathbb{R}^n \backslash B_{v}} \frac{\left|u(y,t)-(u)_{Q_{v}}\right|}{\left|y-x_0\right|^{n+2s }} d y\right)^2 dt %\\\leq c v^{2-2s}\fint_{t_0-v^2}^{t_0}\int_{\mathbb{R}^n \backslash B_{v}} \frac{|u(y,t)-(u)_{Q_{v}}|^2}{\left|y-x_0\right|^{n+2s }} d y dt 
\\\leq cv^2\fint_{t_0-v^2}^{t_0}\left(\int_{\mathbb{R}^n \backslash B_{d}} \frac{\left|u(y,t)-(u)_{Q_d}\right|}{\left|y-x_0\right|^{n+2s}} d y\right)^2dt +c v^2\fint_{t_0-v^2}^{t_0}\left(\int_{\mathbb{R}^n \backslash B_{d}} \frac{|(u)_{Q_{d}}-(u)_{Q_{v}}|}{\left|y-x_0\right|^{n+2s }} d y\right)^2 dt  \\\quad+cv^{2}\fint_{t_0-v^2}^{t_0}\left(\int_{B_d \backslash B_v} \frac{\left|u(y,t)-u(x_0,t)\right|}{\left|y-x_0\right|^{n+2s }} d y \right)^2dt+cv^{2}\fint_{t_0-v^2}^{t_0}\left(\int_{B_{d }\backslash B_v} \frac{\left|u(x_0,t)-(u)_{Q_v}\right|}{\left|y-x_0\right|^{n+2s}} d y\right)^2dt\\
\leq  cv^{2-2s}\left(\frac{v}{d}\right)^{2s}\left(d^{-2s}\underset{t_0-d^2<t<t_0}{\operatorname{ess}\text{sup}}\left(d^{2s}\int_{\mathbb{R}^n\backslash{B_d} } \frac
{\left|u(y,t)\right|}{\left|y-x_0\right|^{n+2s}} d y\right)^2
+\frac{1}{d^{n+2+2s}}\|u\|_{L^2\left(\Omega_1\times(t_1,t_2)\right)}^2\right)\\\quad+cv^{2-2s}\left(\frac{v}{d}\right)^{2s} d^{-2s}|(u)_{Q_{d}}-(u)_{Q_{v}}|^2 +c v^{2}v^{-2s}\left(\int_{B_{d}}\frac{d y}{\left|y-x_0\right|^{n+(s-\beta) }}[u]_{C^{0, \beta} ( \Omega_1\times(t_1,t_2))}\right)^2 \\\quad+c v^{2}v^{2\beta } \left(\int_{\mathbb{R}^n \backslash B_v} \frac{d y}{\left|y-x_0\right|^{n+2s}}[u]_{C^{0, \beta }( \Omega_1\times(t_1,t_2))}\right)^2 \\
\leq  \frac{cv^{2-2s}}{d^{n+2+2s}}\left(
{\left(\operatorname{Tail}_\infty(u;x_0,t_0,d)\right)^2}
+\|u\|_{L^2\left(\Omega_1\times(t_1,t_2)\right)}^2\right)\\\quad+cv^{2-2s} d^{-2s}\left|(u)_{Q_{d}}-(u)_{Q_{v}}\right|^2
+c v^{2-2s}d^{2\beta-2s}  +c v^{2-2s}v^{2\beta -2s} 
\\\leq  v^{2-2s}[c d^{-n-2-2s }+c d^{2(\beta-s)}+c v^{2(\beta-s) } ]\leq cv^{2-2s},
\end{array}
\end{equation*}
where $c \equiv c(\operatorname{data}_h(\beta), d, \beta)$, this gives \cref{decay2} if we choose $\beta=(1+s)/2$.\\
To prove \cref{decay3} we estimate using \cref{caccfinal} and \cref{continuity} as
\begin{equation*}
r^{-2}\fint_{t_0-r^2}^{t_0} \fint_{B_r}\left|u(x,t)-(u)_{B_r\times(t_0-r^2,t_0)}\right|^2 d x dt+r^{-2s}\fint_{t_0-r^2}^{t_0} \fint_{B_r}\left|u(x,t)-(u)_{B_r\times(t_0-r^2,t_0)}\right|^2 d x dt\leq c r^{2
(\beta-1)}
\end{equation*}
holds with $c \equiv c(\operatorname{data}_h(\beta), \beta)$. By \cref{decay2} we instead have
\begin{equation*}
\int_{t_0-r^2}^{t_0}\left(\int_{\mathbb{R}^n \backslash B_{r}} \frac{|u(y,t)-(u)_{Q_{r}}|}{\left|y-x_0\right|^{n+2s }} d y\right)^2 dt\leq c \leq c r^{2(\beta-1)} 
\end{equation*}
holds with $c \equiv c(\operatorname{data}_h, d)$. Taking $\beta$ such that $1-\beta \leq \lambda$, we get \cref{decay3}.
\end{proof}
\begin{lemma}
    Let $w$ be as defined in \cref{heat}, then
\begin{equation}{\label{decay4}}
\fint_{t_0-r^2/16}^{t_0}\fint_{B_{r/ 4}\left(x_0\right)}|\nabla u-\nabla w|^2 d x dt\leq c r^{2(1-s)}  
\end{equation}
holds where $c \equiv c(\operatorname{data}_h, d)$.
\end{lemma}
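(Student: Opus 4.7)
The plan is to invoke the energy estimate \cref{diffinal} with the specific choice $k=(u)_{Q_r}$, and then bound each of the three terms on its right-hand side by $cr^{2(1-s)}$ using the decay information gathered in the previous lemma together with the already established H\"older continuity \cref{continuity}. With this choice, \cref{diffinal} reads
\begin{equation*}
\fint_{t_0-r^2/16}^{t_0}\fint_{B_{r/4}}|\nabla(u-w)|^2\,dx\,dt \le Cr^{1-s}\bigl(\mathrm{I}+\mathrm{II}+\mathrm{III}\bigr),
\end{equation*}
where $\mathrm{I}=r^{-1+s}\int_{t_0-r^2}^{t_0}\bigl(\int_{\mathbb{R}^n\setminus B_r}\frac{|u(y,t)-(u)_{Q_r}|}{|y-x_0|^{n+2s}}\,dy\bigr)^2\,dt$ is the tail contribution, $\mathrm{II}=r^{1-s}r^{-2s}\fint_{t_0-r^2}^{t_0}\fint_{B_r}|u-(u)_{Q_r}|^2\,dx\,dt$ is the $L^2$ oscillation contribution, and $\mathrm{III}=r^{1-s}\fint_{t_0-r^2/4}^{t_0}\int_{B_{r/2}}\fint_{B_{r/2}}\frac{|u(x,t)-u(y,t)|^2}{|x-y|^{n+2s}}\,dx\,dy\,dt$ is the double fractional contribution.

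First I would fix a H\"older exponent $\beta\in(s,1)$, say $\beta=(1+s)/2$ as in the preceding lemma. Then for $\mathrm{I}$, applying \cref{decay2} with $v=r$ yields that the inner integral is $\le cr^{2-2s}$, so $r^{1-s}\mathrm{I}\le cr^{2(1-s)}$ immediately. For $\mathrm{II}$, the H\"older estimate \cref{continuity} applied with $\alpha_0=\beta$ gives $\fint_{Q_r}|u-(u)_{Q_r}|^2\,dz\le cr^{2\beta}$, whence $r^{1-s}\mathrm{II}\le cr^{2-4s+2\beta}=cr^{2(1-s)}\cdot r^{2(\beta-s)}\le cr^{2(1-s)}$ since $\beta>s$ and $r\le 1$. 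For $\mathrm{III}$, \cref{decay1} (whose hypothesis $\beta>s$ is met) bounds the double fractional integral by $cr^{2(\beta-s)}$, so $r^{1-s}\mathrm{III}\le cr^{2(1-s)+2(\beta-s)}\le cr^{2(1-s)}$. Summing the three pieces yields the target estimate, with the constant depending only on $\operatorname{data}_h$ and $d$.

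The main obstacle is really only a matter of exponent bookkeeping: the decisive factor $r^{2(1-s)}$ is produced precisely by the nonlocal tail contribution $\mathrm{I}$, while $\mathrm{II}$ and $\mathrm{III}$ are of strictly higher order because $\beta$ can be chosen strictly inside $(s,1)$. This is exactly the regime in which \cref{decay1} (which requires $\beta>s$) and \cref{continuity} (which requires $\beta<1$) are simultaneously available, so the estimate closes without any further delicate analysis. The only reason the lemma is nontrivial is that one must go back to the finer three-term form \cref{diffinal} of the energy estimate rather than the already-absorbed version \cref{DIFFER}, since the latter trades the double fractional term for an $L^2$-oscillation term with a $r^{-1-s}$ weight, which would be too singular for a direct application of the H\"older bound.
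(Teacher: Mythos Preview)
Your proof is correct and is essentially the same as the paper's: both apply \cref{diffinal} with $k=(u)_{Q_r}$ and $\beta=(1+s)/2$, then bound the three right-hand terms using \cref{decay2}, \cref{continuity}, and \cref{decay1} respectively. The paper chooses to unpack \cref{diffinal} by revisiting the intermediate terms $(\mathrm{II})$ and $(\mathrm{III})$ from the proof of \cref{diffest}, whereas you work directly with the final three-term form of \cref{diffinal}; these are equivalent, and your closing remark about why \cref{DIFFER} (with its effective $r^{-1-s}$ weight on the oscillation term) would not suffice is accurate.
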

\begin{proof}
The proof follows directly from \cref{diffinal}, by choosing $k=(u)_{Q_r}$ and using \cref{decay1}, \cref{decay2} along with Hölder continuity for the exponent $\beta=(1+s)/2$. For the sake of completeness, we include here some details. Let us recall the proof of \cref{diffest}, and following the notations introduced, we have
\begin{equation}
	\label{differ+}
	\begin{array}{l}
		 \fint_{t_0-r^2/16}^{t_0} \fint_{B_{r / 4}}\left|\nabla u-\nabla w\right|^2 dx dt
   =\underbrace{-\fint_{t_0-r^2/16}^{t_0} \fint_{B_{r /4}}u_{t} (u-w)dx dt +\fint_{t_0-r^2/16}^{t_0} \fint_{B_{r / 4}}w_{t} (u-w) dx dt}_{(I)}
\\\qquad\qquad\qquad\qquad\qquad\qquad\qquad
		  \underbrace{- \frac{c}{2}\fint_{t_0-r^2/16}^{t_0}\int_{B_{r/2}} \fint_{B_{r/2 }}(u(x,t)-u(y,t))((u-w)(x,t)-(u-w)(y,t)) d\mu dt}_{(II)} \\\qquad\qquad\qquad\qquad\qquad\qquad\qquad
		 \underbrace{ -c \fint_{t_0-r^2/16}^{t_0}\int_{\mathbb{R}^n \backslash B_{r/2 }}  \fint_{B_{r /2}}(u(x,t)-u(y,t)) (u-w)(x,t) \times K(x, y, t) d x  dy dt}_{(III)}.
	\end{array}
\end{equation} We have ignored the Steklov averages in \cref{differ+}, as their convergence properties will give us the same results we got in \cref{diffest}. We now improve the estimates for the terms $\mathrm{(II)}$ and $\mathrm{(III)}$ compared to \cref{diffest}.
\begin{description}[leftmargin=0cm]
        \item{\textbf{Estimate for (II):}} Using \cref{decay1} in \cref{diff2}, we get choosing $\varepsilon=\frac{1}{4}$,
        \begin{equation}{\label{II}}
        \begin{array}{rcl}
           {|(\mathrm{II})|}  \leq Cr^{2(1-s)}r^{2(\beta-s)}+\frac{1}{4} \fint_{t_0-r^2/16}^{t_0} \fint_{B_{r/4}} \left|\nabla(u-w)\right|^2 d x d t,
            \end{array}
        \end{equation}
whenever $s<\beta<1$, where $c \equiv c(\operatorname{data}_h(\beta), d, \beta)$.
\item{\textbf{Estimate for (III):}}  From \cref{diff3}, taking $\varepsilon=\frac{1}{4}$, we get using \cref{continuity,decay2},
\begin{equation}{\label{III}}
	\begin{array}{rcl}
    {|(\mathrm{III})|}&\leq& Cr^{2(1-s) }\Bigg[r^{-2s}\fint_{t_0-r^2}^{t_0} \fint_{B_{r }}|u(x,t)-(u)_{Q_r}|^2dxdt+r^{-2+2s}\int_{t_0-r^2}^{t_0}\left( \int_{\mathbb{R}^n \backslash B_{r }} \frac{|u(y,t)-(u)_{Q_r}|}{\left|y-x_0\right|^{n+2s }} d y\right)^2 dt\Bigg]\\&&+\frac{1}{4}\fint_{t_0-r^2/16}^{t_0}\fint_{B_{r/4 }}|\nabla(u-w)(x,t)|^2 dxdt\\&\leq& Cr^{2(1-s) }[r^{2(\beta-s)}+r^{-2+2s}r^{2-2s}]+\frac{1}{4}\fint_{t_0-r^2/16}^{t_0}\fint_{B_{r/4 }}|\nabla(u-w)(x,t)|^2 dx dt,
    \end{array}
\end{equation}
where $c \equiv c(\operatorname{data}_h(\beta), d,\beta)$.
    \end{description}
    Using \cref{II,III,diff1} and convergence property of Steklov average in \cref{differ+}, we get
    \begin{multline}{\label{decay5}}
\fint_{t_0-r^2/16}^{t_0}\fint_{B_{r / 4}(x_0)}|\nabla (u(x,t)-w(x,t))|^2 d x dt +\frac{8}{r^2}\fint_{B_{r/4}(x_0)}\left | (u-w)^2(x,t_0)\right| dx
\leq cr^{2(1-s)}[1+r^{2(\beta-s)}].
\end{multline}
In \cref{decay5} $\beta$ is arbitrary and satisfies $s<\beta<1$, and the constant denoted by $c$ depend on $\beta$, $d$ and $\operatorname{data}_h$(which again depends on $\beta$). Then we take $
\beta:=\frac{1+s}{2}
$
in \cref{decay5} and obtain \cref{decay4}.
\end{proof}
\textbf{Estimates for $\nabla w$:} 
Once \cref{decay4} is established, we now show the local H\"older continuity of $\nabla u$. We recall the Euler-Lagrange equation followed by $w$ as \cref{refsol}, and take the test function as $w_h\phi^2$, for some $\phi\in L^2(t_0-r^2/64,t_0;W_0^{1,2}(B_{r/8}))\cap W^{1,2}(t_0-r^2/64,t_0;L^2(B_{r/8}))$(where $w_h$ is the usual Steklov average), to get 
\begin{equation}{\label{refcacc}}
\begin{array}{rcl}
\fint_{t_0-r^2/64}^{t_0} \fint_{B_{r/ 8}}( w_{h,t}w_h \phi^2  +(\nabla w)_h \cdot \nabla (w_h\phi^2) )d x dt&=&0\\
\implies \frac{1}{2}\fint_{B_{r/ 8}}\fint_{t_0-r^2/64}^{t_0} \frac{\partial w^2_{h}}{\partial t} \phi^2 dt dx +\fint_{t_0-r^2/64}^{t_0} \fint_{B_{r/ 8}} |(\nabla w)_h|^2\phi^2dxdt&=&-2 \fiint_{Q_{r/ 8}} (\nabla w)_hw_h\cdot\phi\nabla\phi dxdt\\&\leq& 2\fiint_{Q_{r/ 8}} |(\nabla w)_h||\phi||w_h||\nabla\phi| dxdt\\&\leq& 2\varepsilon\fint_{t_0-r^2/64}^{t_0} \fint_{B_{r/ 8}} |(\nabla w)_h|^2\phi^2dxdt\\&&+2c(\varepsilon)\fint_{t_0-r^2/64}^{t_0} \fint_{B_{r/ 8}} w_h^2|\nabla \phi|^2dxdt,
\end{array}
\end{equation}
where $c$ is a constant depending only on $\varepsilon$. Taking $\varepsilon=\frac{1}{4}$ in \cref{refcacc}, we get
\begin{equation}{\label{refcaccio}}
    \begin{array}{rcl}
      \fint_{B_{r/ 8}}\fint_{t_0-r^2/64}^{t_0} \frac{\partial w_h^2}{\partial t} \phi^2 dt dx +\fint_{t_0-r^2/64}^{t_0} \fint_{B_{r/ 8}} |(\nabla w)_h|^2\phi^2dxdt &\leq &c\fint_{t_0-r^2/64}^{t_0} \fint_{B_{r/ 8}} |w_h|^2|\nabla \phi|^2dxdt\\
      \implies \fint_{B_{r/ 8}}\fint_{t_0-r^2/64}^{t_0} \frac{\partial (w_h\phi)^2}{\partial t} dt dx+\fint_{t_0-r^2/64}^{t_0} \fint_{B_{r/ 8}} |(\nabla w)_h|^2\phi^2dxdt &\leq &c\fint_{t_0-r^2/64}^{t_0} \fint_{B_{r/ 8}} |w_h|^2|\nabla \phi|^2dxdt\\&&+2\fint_{t_0-r^2/64}^{t_0} \fint_{B_{r/ 8}} |w_h|^2\phi\phi_tdxdt\\ \implies \fint_{B_{r/ 8}} (w_h\phi)^2(x,t_0) dx+\fint_{t_0-r^2/64}^{t_0} \fint_{B_{r/ 8}} |(\nabla w)_h|^2\phi^2dxdt &\leq &c\fint_{t_0-r^2/64}^{t_0} \fint_{B_{r/ 8}} |w_h|^2|\nabla \phi|^2dxdt\\&&+2\fint_{t_0-r^2/64}^{t_0} \fint_{B_{r/ 8}} |w_h|^2\phi\phi_tdxdt\\&&+\fint_{B_{r/ 8}} (w_h\phi)^2(x,t_0-r^2/64) dx.
    \end{array}
\end{equation}
Now choosing $\phi (x,t_0-r^2/64)=0$, $\phi(x,t)\equiv 1$ in $B_{r/16}\times(t_0,t_0-r^2/256)$, $\phi\leq 1$ in $B_{r/8}\times(t_0-r^2/64,t_0)$, $|\partial_t \phi(x,t)|\leq \frac{c}{r^2}$,
and $\left|\nabla \phi\right|\leq \frac{c}{r}$, we get from \cref{refcaccio}, \begin{equation*}
    \fint_{B_{r/ 16}} w_h^2(x,t_0) dx+\fint_{t_0-r^2/256}^{t_0} \fint_{B_{r/ 16}} |(\nabla w)_h|^2dxdt \leq \frac{c}{r^2}\fint_{t_0-r^2/64}^{t_0} \fint_{B_{r/ 8}} |w_h|^2dxdt,
\end{equation*}
where $c$ is a constant. Now using \cref{steklov2}, we get that the right hand side of the above inequality is bounded by $\frac{c}{r^2} \fiint_{Q_{r/ 8}} |w|^2dxdt$. As the left hand side is positive, we can pass to $h\to 0$ and use convergence properties of Steklov averages to get 
\begin{equation}{\label{refercacc}}
    \fint_{B_{r/ 16}} w^2(x,t_0) dx+\fint_{t_0-r^2/256}^{t_0} \fint_{B_{r/ 16}} |\nabla w|^2dxdt \leq \frac{c}{r^2}\fint_{t_0-r^2/64}^{t_0} \fint_{B_{r/ 8}} |w|^2dxdt,
\end{equation}
where $c$ is a constant.
Now we define for $1\leq i\leq n$, $k>0$,
\begin{equation*}
w_{i,k}=\frac{w(x+ke_i,t)-w(x,t)}{k},
\end{equation*}
where $e_i$ represents the standard basis vector of $\mathbb{R}^n$. Clearly all these $w_{i,k}$ are defined for $x\in B_{r/4}^k\equiv\{x\in B_{r/4}:\text{dist}(x,\partial B_{r/4})>k\}$. We choose $k$ small such that $B_{r/4}^k$ is non-empty. Then all these $w_{i,k}$ satisfies the same equation as of $w$ in $B_{r/4}^k\times(t_0-r^2/16,t_0)$. By choosing $k$ sufficiently small we may assume $Q_{r/8}\subset B_{r/4}^k\times(t_0-r^2/16,t_0)$. Then from \cref{refercacc}, using mean value theorem, we get for each $1\leq i \leq n$,
\begin{equation}{\label{gradcacc}}
    \fiint_{Q_{r/ 16}} |\nabla w_{i,k}|^2dxdt \leq \frac{c}{r^2} \fiint_{Q_{r/ 8}} |w_{i,k}|^2dxdt\leq\frac{c}{r^2} \fiint_{Q_{r/ 8}} |\nabla w|^2dxdt,
\end{equation}
where $c$ is a constant depending only on $n$.
Since the right hand side of \cref{gradcacc} is finite, we can pass to $k\to 0$ in left for each $1\leq i\leq n$ to get
\begin{equation}{\label{gradient}}
    \fiint_{Q_{r/ 16}} |\nabla^2 w|^2dxdt \leq \frac{c}{r^2} \fiint_{Q_{r/ 8}} |\nabla w|^2dxdt\leq\frac{c}{r^2} \fiint_{Q_{r/ 4}} |\nabla w|^2dxdt,
\end{equation}
where $c$ is a constant depending only on $n$. Note that \cref{gradient} implies that $\nabla^ 2w$ exists.
Now from \cref{reguw}, we get 
\begin{equation}{\label{reguw2}}
    \fiint_{Q_v} |w_{i,k}(x,t)-(w_{i,k})_{Q_v}|^2dxdt\leq c\left(\frac{v}{r}\right)^2\fiint_{Q_{r/16}} |w_{i,k}(x,t)-(w_{i,k})_{Q_{r/16}}|^2dxdt,
\end{equation}
for all $v\in(0,\frac{r}{16}]$, where $c$ is a constant depending only on $n$. \\Now the right hand side of \cref{reguw2} is bounded by $\left(\frac{v}{r}\right)^2 r^2\fiint_{Q_{r/ 16}} |\nabla w_{i,k}|^2dxdt $ (by \cref{poin}), which is in turn again bounded by $\left(\frac{v}{r}\right)^2 \fiint_{Q_{r/ 4}} |\nabla w|^2dxdt$ (by \cref{gradient,gradcacc}). Also, all the terms in LHS and RHS are positive. So we can pass $k\to 0$, in \cref{reguw2} to get whenever $0<v<\frac{r}{16}$,
\begin{equation}{\label{reguref}}
\begin{array}{rcl}
    \fiint_{Q_v} |\nabla w(x,t)-(\nabla w)_{Q_v}|^2dxdt&\leq &c\left(\frac{v}{r}\right)^2\fiint_{Q_{r/16}} |\nabla w(x,t)-(\nabla w)_{Q_{r/16}}|^2dxdt,
\end{array}
\end{equation}
where $c \equiv c(n) \geq 1$, we have used here the fact that the limit of a non-negative sequence is also non-negative. We will use these estimates of $\nabla w$ to get the regularity of $\nabla u$ as follows.\smallskip\\
\textbf{Regularity of $\nabla u$:} For all $0<v\leq \frac{r}{16}
$, using \cref{av2} and Poincar\'{e} inequality, we estimate
\begin{equation*}
    \begin{array}{cl}
         & \fint_{t_0-v^2}^{t_0}\fint_{B_v}\left|\nabla u-(\nabla u)_{Q_v}\right|^2 d x d t\\\leq &  c\fint_{t_0-v^2}^{t_0}\fint_{B_v}\left|\nabla w-(\nabla w)_{Q_v}\right|^2 d x d t+c\left(\frac{r}{v}\right)^{n+2} \fint_{t_0-r^2/16}^{t_0}\fint_{B_{r/ 4}}|\nabla u-\nabla w|^2d xdt\\\stackrel{\cref{reguref}}{\leq} &c\left(\frac{v}{r}\right)^2\fint_{t_0-r^2/256}^{t_0}\fint_{B_{r/16}}\left|\nabla w-(\nabla w)_{Q_{r/16}}\right|^2 d x d t+c\left(\frac{r}{v}\right)^{n+2} \fint_{t_0-r^2/16}^{t_0}\fint_{B_{r/ 4}}|\nabla u-\nabla w|^2d xdt\\\stackrel{}{\leq} &c \left(\frac{v}{r}\right)^2r^2\fint_{t_0-r^2/256}^{t_0} \fint_{B_{r/16}}\left|\nabla^2 w(x,t)\right|^2 d x dt 
+c\left(\frac{r}{v}\right)^{n+2} \fint_{t_0-r^2/16}^{t_0}\fint_{B_{r/ 4}}|\nabla u-\nabla w|^2d xdt\\\stackrel{\cref{gradient}}{\leq}& c \left(\frac{v}{r}\right)^2\fint_{t_0-r^2/16}^{t_0} \fint_{B_{r/ 4}}|\nabla w(x,t)|^2 d x dt 
+c\left(\frac{r}{v}\right)^{n+2} \fint_{t_0-r^2/16}^{t_0}\fint_{B_{r/ 4}}|\nabla u-\nabla w|^2d xdt\\\stackrel{\cref{decay4}}{\leq}& c \left(\frac{v}{r}\right)^2\Bigg[\fint_{t_0-r^2/16}^{t_0} \fint_{B_{r/ 4}}|\nabla u(x,t)|^2 d x dt+\fint_{t_0-r^2/16}^{t_0} \fint_{B_{r/ 4}}|\nabla u-\nabla w|^2 d x dt\Bigg] 
+c\left(\frac{r}{v}\right)^{n+2}r^{2(1-s)}\\\stackrel{\cref{decay3},\cref{decay4}}{\leq}& c \left(\frac{v}{r}\right)^2[r^{-2\lambda}+r^{2(1-s)}] 
+c\left(\frac{r}{v}\right)^{n+2}r^{2(1-s)}\\\leq & c \left(\frac{v}{r}\right)^2r^{-2\lambda} 
+c\left(\frac{r}{v}\right)^{n+2}r^{2(1-s)} \text {, }
    \end{array}
\end{equation*}
with $c \equiv c(\operatorname{data}_h(\lambda),d, \lambda)$. In the above inequality, since $0<v\leq \frac{r}{16}$ is arbitrary, we take $v=r^{1+(1-s)/(n+2)}/16$ and choose $\lambda=(1-s)/(2n+4)$. Hence we conclude with
\begin{equation}{\label{c1}}
    \fint_{t_0-v^2}^{t_0}\fint_{B_v(x_0)}\left|\nabla u-(\nabla u)_{Q_v}\right|^2 d x d t\leq c v^{2\alpha }, \quad \alpha:=\frac{1-s}{2n+4},
\end{equation}
where $c \equiv c(\operatorname{data}_h, d)$. The above holds whenever $B_v \Subset \Omega$ is a ball with centre in $\Omega_0$, with $v \leq d^{1+(1-s)/(n+2)}/ c(n)$. As per assumption, $\Omega_0 \Subset \Omega_1 \Subset \Omega$ and $(\tau_1,\tau_2)\Subset(t_1,t_2)\Subset(0,T)$ are arbitrary, which proves the local Hölder continuity of $\nabla u$ in $\Omega\times(0,T)$, via the classical Campanato-Meyers' integral characterization along with the estimate for $\|\nabla u\|_{C^{0,\alpha}(\Omega_0\times(\tau_1,\tau_2))}$. 
\nocite{*}
\section*{Acknowledgement} The author would like to thank Dr. Karthik Adimurthi for suggesting the problem and his valuable comments on the results along with detailed discussions. Also, she gratefully acknowledges the financial support provided by IIT Kanpur. This work started during the author's visit to TIFR-CAM, Bengaluru. The author would like to express her gratitude to TIFR-CAM for their invitation and warm hospitality.
\bibliography{main}
\bibliographystyle{plain}
\end{document}